\newcommand{\calA}{\mathcal{A}}
\newcommand{\calC}{\mathcal{C}}
\newcommand{\calD}{\mathcal{D}}
\newcommand{\calH}{\mathcal{H}}
\newcommand{\calL}{\mathcal{L}}
\newcommand{\calO}{\mathcal{O}}
\newcommand{\calM}{\mathcal{M}}
\newcommand{\ZZ}{\mathbb{Z}}
\newcommand{\QQ}{\mathbb{Q}}
\newcommand{\RR}{\mathbb{R}}
\newcommand{\kk}{\Bbbk}
\newcommand{\ab}{\mathbf{a}}
\newcommand{\bb}{\mathbf{b}}
\newcommand{\cb}{\mathbf{c}}
\newcommand{\eb}{\mathbf{e}}
\newcommand{\tb}{\mathbf{t}}
\newcommand{\xb}{\mathbf{x}}
\newcommand{\yb}{\mathbf{y}}
\newcommand{\fkp}{\mathfrak{p}}
\newcommand{\Hom}{\operatorname{Hom}}
\def\opn#1#2{\def#1{\operatorname{#2}}} % to make operators
\opn\Cl{Cl} \opn\conv{conv} \opn\deg{deg} \opn\rank{rank} \opn\Spec{Spec} 
\opn\cone{cone} \opn\End{End} \opn\Hom{Hom} \opn\mod{mod} \opn\gldim{gldim} \opn\pdim{pdim}
\newtheorem{thm}{Theorem}[section]
\newtheorem{lem}[thm]{Lemma}
\newtheorem{prop}[thm]{Proposition}
\theoremstyle{definition}
\newtheorem{defi}[thm]{Definition}
\newtheorem{ex}[thm]{Example}
\theoremstyle{remark}
\newtheorem{rem}[thm]{Remark}
\begin{document}

\title{Conic divisorial ideals and non-commutative crepant resolutions of edge rings of complete multipartite graphs}
\author{Akihiro Higashitani}
\author{Koji Matsushita}

\address[A. Higashitani]{Department of Pure and Applied Mathematics, Graduate School of Information Science and Technology, Osaka University, Suita, Osaka 565-0871, Japan}
\email{higashitani@ist.osaka-u.ac.jp}
\address[K. Matsushita]{Department of Pure and Applied Mathematics, Graduate School of Information Science and Technology, Osaka University, Suita, Osaka 565-0871, Japan}
\email{k-matsushita@ist.osaka-u.ac.jp}

\subjclass[2010]{
Primary 13C14; %Cohen-Macaulay modules
Secondary 13F65, %Commutative rings defined by binomial ideals, toric rings, etc
14M25, %Toric varieties, Newton polyhedra
05C25, %Graphs and abstract algebra
52B20. %52B20 Lattice polytopes in convex geometry
} 
\keywords{class groups, conic divisorial ideals, non-commutative resolutions, non-commutative crepant resolutions, edge rings, complete multipartite graphs}

\maketitle

%%%---abstract---%%%
\begin{abstract} 
The first goal of the present paper is to study the class groups of the edge rings of complete multipartite graphs, denoted by $\kk[K_{r_1,\ldots,r_n}]$, where $1 \leq r_1 \leq \cdots \leq r_n$. 
More concretely, we prove that the class group of $\kk[K_{r_1,\ldots,r_n}]$ is isomorphic to $\ZZ^n$ if $n =3$ with $r_1 \geq 2$ or $n \geq 4$, 
while it turns out that the excluded cases can be deduced into Hibi rings. 
The second goal is to investigate the special class of divisorial ideals of $\kk[K_{r_1,\ldots,r_n}]$, called conic divisorial ideals. 
We describe conic divisorial ideals for certain $K_{r_1,\ldots,r_n}$ including all cases where $\kk[K_{r_1,\ldots,r_n}]$ is Gorenstein. 
Finally, we give a non-commutative crepant resolution (NCCR) of $\kk[K_{r_1,\ldots,r_n}]$ in the case where it is Gorenstein. 
\end{abstract}

\bigskip

\section{Introduction}

\subsection{Backgrounds}
The present paper has two goals: one is the study of conic divisorial ideals of certain toric rings, which are edge rings of complete multipartite graphs, 
and the other is the construction of their non-commutative crepant resolutions as the application of the study of conic divisorial ideals. 

Conic divisorial ideals are a certain class of divisorial ideals which are a special kind of maximal Cohen--Macaulay (MCM, for short) modules of rank one 
and play important roles in the theory of commutative rings with positive characteristic as well as non-commutative algebraic geometry. 
In fact, the following theorems hold. Let $R=\kk[C \cap \ZZ^d]$ be a normal affine monoid algebra, where $C \subset \RR^d$ is a pointed finitely generated normal cone: 
\begin{thm}[{\cite[Proposition 3.8]{BG1}, \cite[Proposition 3.2.3]{SmVdB}}]\label{thm1}
The set of all conic divisorial ideals of $R$ corresponds to the set of the $R$-modules appearing in $R^{1/k}$ as direct summands for $k \gg 0$, 
where $R^{1/k}=\kk[C \cap (1/k\ZZ)^d]$ is regarded as an $R$-module. 
\end{thm}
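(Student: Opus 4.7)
The plan is to exploit the torus-equivariant grading of $R^{1/k}$ and decompose it as an $R$-module according to the cosets of $\ZZ^d$ in $(1/k)\ZZ^d$. Writing
$$R^{1/k} = \bigoplus_{\mathbf{v}} M_{\mathbf{v}},$$
where the sum ranges over representatives $\mathbf{v} \in (1/k)\ZZ^d \cap C$ of cosets modulo $\ZZ^d$ and $M_{\mathbf{v}}$ is the $\kk$-span of the monomials in $R^{1/k}$ whose exponent lies in $(\mathbf{v} + \ZZ^d) \cap C$, one checks that multiplication by any monomial of $R$ preserves this coset, so each $M_{\mathbf{v}}$ is an $R$-submodule and a direct summand.

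Next, I would identify each $M_{\mathbf{v}}$ with a divisorial ideal. Using the facet description $C = \{\xb \in \RR^d : \sigma_i(\xb) \geq 0 \text{ for } i = 1,\ldots,m\}$, the exponents appearing in $M_{\mathbf{v}}$ after shifting by $-\mathbf{v}$ are precisely those $\yb \in \ZZ^d$ with $\sigma_i(\yb) \geq -\sigma_i(\mathbf{v})$, equivalently $\sigma_i(\yb) \geq \lceil -\sigma_i(\mathbf{v}) \rceil$. This presents $M_{\mathbf{v}}$ as the divisorial ideal determined by the integer vector $(\lceil -\sigma_i(\mathbf{v}) \rceil)_{i=1}^m$. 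By the very definition of \emph{conic}, the divisorial ideals of this form, parametrized by $\mathbf{v} \in \RR^d$, are precisely the conic ones.

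It remains to match both sides of the correspondence. Any direct summand of $R^{1/k}$ is some $M_{\mathbf{v}}$ with $\mathbf{v} \in (1/k)\ZZ^d$, hence is conic. Conversely, there are only finitely many conic divisorial ideals up to isomorphism (they are indexed by chambers of a rational hyperplane arrangement on $\RR^d$ determined by the $\sigma_i$), so I would choose a rational interior point in each chamber and take $k$ to be a common multiple of the denominators of the coordinates of these chosen points; for such $k$, every conic divisorial ideal appears as some $M_{\mathbf{v}}$ in $R^{1/k}$.

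The main obstacle I anticipate is the uniform denominator-clearing step together with careful treatment of boundary cases. One must verify that the finite list of conic chambers admits rational interior representatives whose denominators can be bounded uniformly, which rests on the rationality of the polyhedral arrangement cut out by the $\sigma_i$, and one must handle $\mathbf{v}$ lying on walls of the arrangement so as not to double-count summands or miss conic ideals on the boundary between two chambers.
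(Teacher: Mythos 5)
This statement is quoted from \cite[Proposition 3.8]{BG1} and \cite[Proposition 3.2.3]{SmVdB}, and the paper gives no proof of its own, so your argument can only be measured against the standard one in those references --- which it reproduces essentially verbatim: decomposing $R^{1/k}$ into the coset submodules $M_{\mathbf{v}}$, identifying $M_{\mathbf{v}}\cong T(\lceil\tau(-\mathbf{v})\rceil)$ via the facet inequalities (which is conic by definition), and clearing denominators over the finitely many chambers to realize every conic class for suitable $k$. The argument is correct; the only steps worth making explicit are that passing from ``direct summand of $R^{1/k}$'' to ``some $M_{\mathbf{v}}$'' uses that each $M_{\mathbf{v}}$ is rank one (hence indecomposable) together with Krull--Schmidt for finitely generated graded $R$-modules, and that each nonempty (possibly half-open, possibly lower-dimensional) chamber of the rational arrangement does contain a rational point, which you correctly flag as the boundary issue to be checked.
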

\begin{thm}[{\cite[Corollary 6.2]{FMS}, \cite[Proposition 1.8]{SpVdB}}]\label{thm2}
For $k \gg 0$, $\End_R(R^{1/k})$ is a non-commutative resolution (NCR) of $R$. 
\end{thm}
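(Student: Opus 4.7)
The plan is to establish the two defining properties of a non-commutative resolution for $\Lambda := \End_R(R^{1/k})$ with $k$ sufficiently divisible: first, that $R^{1/k}$ is a faithful, finitely generated $R$-module containing $R$ as a direct summand; and second, that $\gldim \Lambda < \infty$. The first condition is essentially bookkeeping once the $R$-module decomposition of $R^{1/k}$ is written down, so the real content lies in the second.

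First I would record the standard $(1/k)\ZZ^d / \ZZ^d$-graded decomposition
\[
R^{1/k} = \bigoplus_{\alpha} I_\alpha,
\]
where $\alpha$ ranges over coset representatives in $(1/k)\ZZ^d / \ZZ^d$ whose class meets $C$, and each $I_\alpha := \bigoplus_{a \in (\alpha + \ZZ^d) \cap C} \kk \cdot x^a$ is a rank-one reflexive, hence divisorial and MCM, fractional ideal of $R$. The summand $I_0 = R$ supplies the generator condition, and by Theorem \ref{thm1} one may choose $k$ so that every conic divisorial ideal class appears among the $I_\alpha$.

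The main obstacle is proving $\gldim \Lambda < \infty$. The strategy I would follow, after Smith--Van den Bergh, is to bound the projective dimension of each simple $\Lambda$-module by producing a finite resolution of the corresponding indecomposable conic summand $I_\beta$ whose terms lie in $\operatorname{add}(R^{1/k})$. Such a resolution is constructed from the facet inequalities of the cone $C$: the associated Koszul-type complex
\[
0 \to I_\beta \to \bigoplus_F I_{\beta, F} \to \cdots \to I_{\beta, \max} \to 0
\]
has intermediate terms $I_{\beta, S}$ indexed by subsets $S$ of facets of $C$, each again a divisorial ideal of $R$. Applying $\Hom_R(R^{1/k}, -)$ would then convert this into a projective resolution over $\Lambda$, of length bounded by the number of facets, which is the desired finiteness.

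The delicate step, and the one that forces the asymptotic hypothesis on $k$, is verifying that every $I_{\beta, S}$ arising in this complex remains conic and is therefore already present as a summand of $R^{1/k}$. This closure property of the class of conic divisorial ideals under the facet construction is the combinatorial heart of the argument: it is precisely what keeps the whole resolution inside $\operatorname{add}(R^{1/k})$, and once it is confirmed the finite global dimension of $\Lambda$, and hence the NCR property, follows immediately.
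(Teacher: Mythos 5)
First, a remark on context: the paper does not prove Theorem~\ref{thm2} at all --- it is quoted from \cite[Corollary 6.2]{FMS} and \cite[Proposition 1.8]{SpVdB}, and the machinery behind the latter proof is exactly what is recalled in Section~\ref{sec:NCCR} (Lemmas~\ref{lem:gldim}, \ref{lem:finite1}, \ref{lem:finite2}). Your preliminary steps are correct and standard: the decomposition $R^{1/k}=\bigoplus_\alpha I_\alpha$ over $(1/k)\ZZ^d/\ZZ^d$, the fact that each $I_\alpha$ is a rank-one reflexive MCM module, that $I_0=R$ is a summand, and that for $k\gg 0$ the summands realize precisely the conic classes (Theorem~\ref{thm1}).

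The genuine gap is in how you deduce $\gldim\Lambda<\infty$. Your Koszul-type complex resolves a conic summand $I_\beta$ by modules that you then require to be again conic, hence again in $\operatorname{add}(R^{1/k})$. Applying $\Hom_R(R^{1/k},-)$ to such a complex (granting exactness) yields an exact complex of \emph{projective} $\Lambda$-modules ending in the already-projective module $\Hom_R(R^{1/k},I_\beta)$; this gives no bound on any projective dimension and no information about $\gldim\Lambda$. What must actually be resolved by objects of $\operatorname{add}(R^{1/k})$ are the modules $\Hom_R(R^{1/k},T(\ab))$ for divisorial ideals $T(\ab)$ that are \emph{not} conic --- i.e.\ for all classes $\chi\in X(G)$ outside the chosen set --- and one then needs the nontrivial input of Lemma~\ref{lem:gldim} (\cite[Lemma 10.1]{SpVdB}) to convert $\pdim_\Lambda\Hom_R(R^{1/k},T(\ab))<\infty$ for all $\ab$ into finiteness of the global dimension. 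Two further points are glossed over. (i) $\Hom_R(R^{1/k},-)$ is only left exact, so exactness of the image complex is not automatic; in \cite{SpVdB} it is secured by building the complex as the $G$-invariants of an honest Koszul complex on the subset of variables selected by a separating one-parameter subgroup $\lambda$ (Lemma~\ref{lem:finite1}). (ii) The ``closure property'' you defer --- that all intermediate terms stay conic --- fails for arbitrary facet twists, since the conic region is a bounded polytope in $\Cl(R)\otimes\RR$ while the twists $\chi+\beta_{i_1}+\cdots+\beta_{i_p}$ range over an unbounded set; making the terms land where they are needed is precisely the character-by-character choice of $\lambda$ and the accompanying induction that Section~\ref{sec:NCCR} carries out explicitly for $K_{2,2,2}$ and $K_4$. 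As written, the proposal names the right ingredients but assembles them in a way from which the conclusion does not follow.
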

These theorems imply that the endomorphism ring of the direct sum of all conic divisorial ideals of $R$ is an NCR of $R$. 

Recently, conic divisorial ideals of certain toric rings and their applications to the construction of non-commutative (crepant) resolutions 
are well studied (see, e.g., \cite{HN, N, SpVdB}, and so on).  
One of the most important aspects of the study of conic divisorial ideals is that, as mentioned above, 
we can construct an NCR, which is a non-commutative ring having a finite global dimension. 
Such rings often appear in the context of representation theory (see, e.g., Auslander's work \cite{Aus}). 
Furthermore, conic divisorial ideals are used to analyze the structure of Frobenius push-forward of $R$ in the theory of commutative algebra with positive characteristic. 
Thus, it is quite natural to classify conic divisorial ideals of certain class of toric rings since there are many applications of such classifications. 
In the present paper, we focus on the toric rings arising from complete multipartite graphs. 

Non-commutative crepant resolution (NCCR) was introduced by Van den Bergh (\cite{VdB}) in the context of non-commutative algebraic geometry. 
As this name implies, an NCCR is strongly related to the usual crepant resolution used in algebraic geometry, 
and the introduction of NCCRs provides a new interaction among algebraic geometry, (non-)commutative ring theory, and representation theory. 
Since the existence of an NCCR does not hold in general, the existence of an NCCR for some certain classes is one of the most well-studied problems in this area. 
For example, concerning the case of toric rings, the following results are known: 
\begin{itemize}
\item an NCCR of each quotient singularity by a finite abelian group (which is a toric ring associated with a simplicial cone) is given (see, e.g., \cite{IW, VdB}); 
\item toric rings whose class groups are $\ZZ$ have an NCCR (\cite{VdB}); 
\item Hibi rings whose class groups are $\ZZ^2$ have an NCCR (\cite{N}); 
\item NCCRs of $3$-dimensional Gorenstein toric ring can be obtained via the theory of dimer models (see \cite{Bro, IU, SpVdB2}); 
\item an NCCR of Segre products of polynomial rings are constructed (\cite{HN}); 
\item there are other results on the existence of an NCCR for toric rings (see \cite{SpVdB, SpVdB1}). 
\end{itemize}
In the present paper, we study the existence of an NCCR for $\kk[K_{r_1,\ldots,r_n}]$. 

\smallskip

In the remaining parts of the present paper, let $\kk$ be an algebraically closed field of characteristic $0$, for simplicity.

\subsection{Edge rings and edge polytopes}
Throughout the present paper, all graphs are finite and have no loop and no multiple edge. 
Consider a graph $G$ on the vertex set $V(G)=[d]$, where $[d]:=\{1,\ldots,d\}$ for a positive integer $d$, with the edge set $E(G)=\{e_1,\ldots,e_r\}$. 
%Let $\kk[{\bf x}]=\kk[x_1,\ldots,x_r]$ (resp. $\kk[{\bf t}]=\kk[t_1,\ldots,t_d]$) be the polynomial ring in $r$ (resp. $d$) variables over a field $\kk$. 
Let $\kk[{\bf t}]=\kk[t_1,\ldots,t_d]$ be the polynomial ring in $d$ variables over a field $\kk$. 
We write $\kk[G]$ for the subalgebra of $\kk[{\bf t}]$ generated by ${\bf t}^e=t_it_j$ for all edges $e=\{i,j\} \in E(G)$. 
The monoid $\kk$-algebra $\kk[G]$ is called the \textit{edge ring} of $G$. 

%Let $G$ be a graph on the vertex set $V(G)=[d]$. 
Given an edge $e=\{i,j\} \in E(G)$, let $\rho(e)=\eb_i+\eb_j$, 
where $\eb_i$ denotes the $i$-th unit vector of $\RR^d$ for $i=1,\ldots,d$. 
We define the convex polytope associated to $G$ as follows: 
$$P_G=\conv(\{\rho(e) : e \in E(G)\}) \subset \RR^d.$$ 
We call $P_G$ the \textit{edge polytope} of $G$. 
Note that the edge ring is the toric ring (also known as the polytopal monomial subring) of the edge polytope. 
See, e.g., \cite[Section 10]{Villa} or \cite[Section 5]{HHO} for the introduction to edge rings.

\subsection{In the case of complete multipartite graphs}

A \textit{complete multipartite graph} is a graph on the vertex set $\bigsqcup_{i=1}^n V_i$ with the edge set $\{\{a,b\} : a \in V_i, b \in V_j, 1 \leq i < j \leq n\}$. 
When $|V_i|=r_i$ for $i=1,\ldots,n$, we denote it by $K_{r_1,\ldots,r_n}$. 
In the case $n=2$, we call $K_{r_1,r_2}$ a \textit{complete bipartite graph}. 
In the case $r_1=\cdots=r_n=1$, we call $K_{\underbrace{1,\ldots,1}_n}$, denoted by $K_n$, a \textit{complete graph}. 

The edge rings $\kk[K_{r_1,\ldots,r_n}]$ of complete multipartite graphs are investigated by Ohsugi--Hibi in \cite{OH00}. 
It is proved that the $\kk[K_{r_1,\ldots,r_n}]$ %(which is the defining ideal of the toric ring) 
is a normal Cohen--Macaulay domain and Koszul, and its Hilbert series is also explicitly computed. 
The algebras of {\em Segre--Veronese type} are introduced in \cite[Section 1]{OH00}, 
which are a simultaneous generalization of both Segre products and Veronese subrings of polynomial rings. 
It is proved in \cite[Proposition 1.2]{OH00} that the edge ring $\kk[G]$ of a graph $G$ is of Segre--Veronese type 
if and only if $G$ is a complete multipartite graph. 

Our main object of the present paper is the edge rings $\kk[K_{r_1,\ldots,r_n}]$ of complete multipartite graphs.

\subsection{Main Results}
The first main theorem of the present paper is the following: 
%\begin{thm}\label{main1}
%Let $1 \leq r_1 \leq \cdots \leq r_n$. 
%The class group of the edge ring of the complete multipartite graph $K_{r_1,\ldots,r_n}$ is as follows: 
%\begin{align*}
%\Cl(\kk[K_{r_1,\ldots,r_n}]) \cong \begin{cases}
%0     &\text{ if }n=2, r_1=1 \text{ or }n=3, r_1=r_2=r_3=1, \\
%\ZZ   &\text{ if }n=2, r_1 \geq 2, \\
%\ZZ^2 &\text{ if }n=3, r_1=1, r_2 \geq 2, \\
%\ZZ^n &\text{ if }n=3, r_1 \geq 2 \text{ or }n \geq 4. \\
%\end{cases}
%\end{align*}
%\end{thm}
\begin{thm}\label{main1}
Let $1 \leq r_1 \leq \cdots \leq r_n$. Assume that $n=3$ with $r_1 \geq 2$ or $n \geq 4$. 
Then the class group $\Cl(\kk[K_{r_1,\ldots,r_n}])$ of $\kk[K_{r_1,\ldots,r_n}]$ is isomorphic to $\ZZ^n$ as groups. 
\end{thm}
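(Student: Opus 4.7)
The plan is to apply the standard exact sequence computing the divisor class group of a normal affine toric ring in terms of its facet normals. Writing $G = K_{r_1,\ldots,r_n}$, $d = r_1 + \cdots + r_n$, and $C = \cone\{\rho(e) : e \in E(G)\} \subset \RR^d$, the assumption $n \geq 3$ makes $G$ connected and non-bipartite, so $C$ is a full-dimensional pointed cone sitting in the rank-$d$ lattice $L = \ZZ\{\rho(e)\} = \{v \in \ZZ^d : \sum_i v_i \text{ even}\}$, and $\kk[G] = \kk[C \cap L]$. If $\alpha_1, \ldots, \alpha_s$ denote the primitive inward facet normals of $C$ in $L^* = \ZZ^d \cup (\ZZ^d + (\tfrac12,\ldots,\tfrac12))$, there is an exact sequence
\[
0 \to L \xrightarrow{\Phi} \ZZ^s \to \Cl(\kk[G]) \to 0, \qquad \Phi(v) = (\alpha_i(v))_i,
\]
and the problem reduces to identifying the $\alpha_i$ and computing the cokernel of $\Phi$.

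I would then exhibit two natural families of facet normals, totalling $d + n$. For each vertex $w \in V(G)$ the coordinate functional $\alpha_w: v \mapsto v_w$ is primitive in $L^*$ and vanishes on the edges of $G - w$; the hypothesis ($n \geq 4$, or $n = 3$ with $r_1 \geq 2$) ensures $G - w$ is connected and non-bipartite, so the corresponding subcone has dimension $d - 1$, making $\{\alpha_w = 0\}$ a facet. For each part $V_k$ the half-integral functional $\alpha_k' := \tfrac12\bigl(\sum_{w \notin V_k} v_w - \sum_{w \in V_k} v_w\bigr)$ is primitive in $L^*$ (its entries are $\pm \tfrac12$, placing it in the nontrivial coset of $L^*/\ZZ^d$) and vanishes on the edges joining $V_k$ to its complement, yielding the cone over the edge polytope of $K_{r_k, d-r_k}$, a subcone of the correct dimension $d - 1$. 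Granting that these exhaust all facet normals, the $\alpha_w$'s generate $\ZZ^d \subseteq L^*$ and any $\alpha_k'$ represents the nontrivial coset of $L^*/\ZZ^d \cong \ZZ/2$, so together they generate $L^*$; consequently, $\Phi^*$ is a split surjection of free abelian groups, $\Phi(L)$ is a direct summand of $\ZZ^{d+n}$, and $\Cl(\kk[G]) \cong \ZZ^n$.

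The main obstacle is the complete facet enumeration, i.e., showing that the $d + n$ candidates above really are all the extreme rays of the dual cone $C^\vee = \{\alpha \in L^* : \alpha_i + \alpha_j \geq 0 \text{ for every edge } \{i,j\} \in E(G)\}$. I would exploit the natural $S_{r_1} \times \cdots \times S_{r_n}$-action permuting vertices within each part: an extreme ray may be symmetrized to a functional constant on each part without leaving $C^\vee$, effectively reducing the classification to the complete-graph case $K_n$, whose edge polytope is the second hypersimplex $\Delta(n,2)$ with its classical $2n$ facets $\{x_i = 0\}$ and $\{x_i = 1\}$. Matching these back to the $\alpha_w$ and $\alpha_k'$ then completes the enumeration.
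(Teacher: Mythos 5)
Your framework is the same as the paper's (facet normals of the cone over the edge polytope, plus the standard exact sequence $0 \to L \to \ZZ^s \to \Cl(\kk[G]) \to 0$), but your method for extracting the cokernel is cleaner: where the paper grinds through an explicit Smith normal form of the matrix of facet-normal evaluations on the edge generators, you observe that the coordinate functionals generate the index-two sublattice $(\ZZ^d)^*$ of $L^*$ and any one part-functional $\alpha_k'$ hits the nontrivial coset, so $\Phi^*$ is surjective, all invariant factors of $\Phi$ equal $1$, and the cokernel is free of rank $s-d$. Your verification that the $d+n$ listed hyperplanes really are facets (via the dimensions of $\cone(P_{G-w})$ and of the cone over $P_{K_{r_k,\,d-r_k}}$, using the hypothesis to keep $G-w$ connected and non-bipartite) is also correct and matches the paper's check that every vertex is regular and every part $V_i$ is a fundamental set.

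The genuine gap is the completeness of the facet list, which you rightly call the main obstacle but then propose to close with an argument that fails. Completeness is essential here: any unlisted facet increases $s$ and hence the rank of the class group, so an upper bound on the facets is exactly what is needed. Averaging an extreme ray of $C^\vee$ over the $S_{r_1}\times\cdots\times S_{r_n}$-orbit does stay in $C^\vee$, but it destroys extremality: the coordinate functional attached to $w\in V_k$ symmetrizes to $\tfrac{1}{r_k}\sum_{v\in V_k}\eb_v^*$, which is no longer an extreme ray of the fixed-subspace cone. Consequently the extreme rays of the symmetrized cone (the $K_n$/hypersimplex picture) do not control those of $C^\vee$, and the ``matching back'' step cannot recover the coordinate facets; this is most visible for $n=3$, where $\Delta(3,2)$ is a triangle with only the three facets $x_i\le 1$, yet $P_{K_{2,2,2}}$ has six coordinate facets. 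The paper closes this step by invoking the Ohsugi--Hibi classification of facets of edge polytopes of non-bipartite graphs \cite{OH98} (every facet comes from a regular vertex or a fundamental set) and checking that for complete multipartite graphs the fundamental sets are exactly the parts $V_i$. You would need to cite that theorem, or supply an independent complete enumeration of the extreme rays of $C^\vee$; the symmetrization shortcut as stated cannot deliver it.
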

\begin{rem}\label{rem}
In Theorem~\ref{main1}, the cases where $n=2$ and $n=3$ with $r_1=1$ are not discussed. 
However, those cases can be deduced to Hibi rings (see Proposition~\ref{prop:order}). 
Here, Proposition~\ref{prop:order} says that the edge polytope of $K_{r_1,r_2}$ (resp. $K_{1,r_2,r_3}$) 
is unimodularly equivalent to the order polytope of the poset $\Pi_{r_1-1,r_2-1}$ (resp. $\Pi_{r_1,r_2}'$). 
The class groups of Hibi rings are characterized in \cite{HHN}. 
Since the Hibi ring of the poset $\Pi_{m,n}$ is nothing but the Segre product of two polynomial rings (see \cite[Example 2.6]{HN}), 
an NCCR has been already constructed by \cite[Theorem 3.6]{HN}. 
Moreover, the Hibi rings whose class groups are isomorphic to $\ZZ^2$ have been intensively studied and their NCCRs have been constructed in \cite[Section 3]{N}. 
Therefore, we do not need to treat those cases. %from Section~\ref{sec:facet}. 
\end{rem}

Given integers $1 \leq r_1 \leq \cdots \leq r_n$, %with $n \geq 4$ or $n =3$ and $2 \leq r_1 \leq \cdots \leq r_3$, 
let $\calC(r_1,\ldots,r_n)$ be a convex polytope defined as follows: 
\begin{equation}\label{conic_region}
\begin{split}
%\calC(r_1,\ldots,r_n)=&\bigg\{ (z_1,\ldots,z_n) \in \RR^n : -\sum_{\substack{1 \leq j \leq n-1 \\ j \neq i}}r_j+1 \leq z_i \leq 1 \text{ for }i=1,\ldots,n-1, \\
%&-\sum_{j=1}^{n-1}r_j+1 \leq z_n \leq r_n+1, \; -r_i \leq z_j-z_i \leq r_j \text{ for }1 \leq i, j \leq n \bigg\}. 
\calC(r_1,\ldots,r_n)=&\bigg\{ (z_1,\ldots,z_n) \in \RR^n : -r_i \leq z_j-z_i \leq r_j \text{ for }1 \leq i, j \leq n, \\
&-|J|-\sum_{i \in [n-1] \setminus I}r_i-\sum_{j \in J}r_j+1 \leq \sum_{i \in I}z_i-\sum_{j \in J}z_j \leq |J|+1 \\
&\text{ for }I,J \subset [n-1] \text{ with }|I|=|J|+1 \text{ and }I \cap J = \emptyset, \\
&-|J|-\sum_{i \in [n-1] \setminus I}r_i-\sum_{j \in J}r_j+2 \leq \sum_{i \in I}z_i-\sum_{j \in J}z_j \leq |J| \\
&\text{ for }I \subset [n-1] \text{ and }J \subset [n]\text{ with }|I|=|J|+1, \; n \in J \text{ and }I \cap J = \emptyset\bigg\}, 
\end{split}
\end{equation}
where $J$ is regarded as a multi-set and $J=\emptyset$ might happen, while $I$ is a usual non-empty set. 
For the explicit descriptions in the cases where $n=3$ and $n=4$, see Example~\ref{ex:conic}. 

\begin{thm}\label{main2}
Let $K_{r_1,\ldots,r_n}$ be the complete multipartite graph with $1 \leq r_1 \leq \cdots \leq r_n$. %Assume that $n \geq 4$ or $n=3$ and $r_1 \geq 2$. 
Then the conic divisorial ideals of $\kk[K_{r_1,\ldots,r_n}]$ one-to-one correspond to the points in $\calC(r_1,\ldots,r_n) \cap \ZZ^n$ if $n \leq 4$. 
\end{thm}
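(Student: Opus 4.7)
The plan is to apply the standard toric-geometric description of conic divisorial ideals and push the resulting region through the class group identification given by Theorem~\ref{main1}. Let $v_1,\ldots,v_s$ denote the primitive inward facet normals of the cone over the edge polytope $P_{K_{r_1,\ldots,r_n}}$. A divisorial ideal is conic precisely when its defining integer vector has the form $\mathbf{a}=\bigl(\lceil\langle\alpha,v_i\rangle\rceil\bigr)_{i=1}^{s}\in\ZZ^{s}$ for some real $\alpha\in\RR^{d+1}$, and two such vectors give isomorphic ideals if and only if they differ by a principal divisor, that is, by an element in the image of the map $\ZZ^{d+1}\to\ZZ^{s}$, $m\mapsto(\langle m,v_i\rangle)_i$. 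The cokernel of this map is the class group, which by Theorem~\ref{main1} is isomorphic to $\ZZ^n$; hence the isomorphism classes of conic divisorial ideals correspond to the image in $\ZZ^n$ of the ceiling map.

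The first step is to obtain an explicit facet description of the edge cone of $K_{r_1,\ldots,r_n}$. Ohsugi--Hibi's general results on facets of edge polytopes specialize cleanly for complete multipartite graphs: the facet normals split into two families, one giving differences $z_j-z_i$ between coordinates associated to different parts, and the other giving signed sums $\sum_{i\in I}z_i-\sum_{j\in J}z_j$ indexed by disjoint subsets $I,J\subset[n]$ with $|I|=|J|+1$. This list of normals already prefigures the combinatorial shape of $\calC(r_1,\ldots,r_n)$ in \eqref{conic_region}, so the main organizational step is to match each facet normal to the correct pair of opposite inequalities in the definition of $\calC$.

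Next I would fix an explicit $\ZZ^n$-presentation of the class group from Theorem~\ref{main1} and project the assignment $\alpha\mapsto\bigl(\lceil\langle\alpha,v_i\rangle\rceil\bigr)_{i}$ through it. Each ceiling amounts to the pair of inequalities $a_i-1<\langle\alpha,v_i\rangle\leq a_i$; after eliminating $\alpha$ subject to the gauge freedom inside a fundamental domain of the principal divisors, these descend to conditions on class-group coordinates $(z_1,\ldots,z_n)\in\ZZ^n$. The upper bounds in \eqref{conic_region} come from ceilings of $\langle\alpha,v_i\rangle$, while the lower bounds carrying the $-\sum r_i$ shifts arise from ceilings for the opposite facet normals together with the degree constraint hidden in the cone structure over an edge polytope.

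The main obstacle is verifying that the resulting image in $\ZZ^n$ coincides exactly with $\calC(r_1,\ldots,r_n)\cap\ZZ^n$, neither larger nor shifted. This requires a Fourier--Motzkin style argument realizing every candidate lattice point by a real $\alpha$, together with careful bookkeeping so that the shifts $-|J|-\sum_{i\in[n-1]\setminus I}r_i-\sum_{j\in J}r_j$ appear with the correct constants. The restriction $n\leq 4$ enters precisely here: the number of disjoint pairs $(I,J)$ with $|I|=|J|+1$ and the resulting inequality families remain small enough to verify directly, whereas for $n\geq 5$ the combinatorics of these pairs, and possibly additional facet contributions, have not been organized in closed form. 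I would therefore carry out the matching case by case, handling $n=3$ with $r_1\geq 2$ and then $n=4$ (the remaining cases $n=2$ and $n=3$ with $r_1=1$ reducing to Hibi rings by Remark~\ref{rem}), and check that each inequality in \eqref{conic_region} corresponds to a genuine facet with no redundant or missing condition.
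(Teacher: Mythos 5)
Your overall architecture is the same as the paper's: describe the facets of the edge cone, characterize conic classes by the ceiling condition $a_i-1<\langle\tau_i,\xb\rangle\le a_i$, use the class-group identification $\Cl\cong\ZZ^n$ from Theorem~\ref{main1} to pass to coordinates $(z_1,\ldots,z_n)$, and then prove the two inclusions, with the restriction $n\le 4$ entering only in the surjectivity direction. That is exactly how the paper proceeds (Lemma~\ref{key}, Proposition~\ref{prop:ab}, Lemma~\ref{n=34}).

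There is, however, one concrete point in your setup that is wrong as written and would derail the bookkeeping if taken literally. You assert that the facet normals of the edge cone "split into two families \ldots the other giving signed sums $\sum_{i\in I}z_i-\sum_{j\in J}z_j$ indexed by disjoint subsets $I,J\subset[n]$ with $|I|=|J|+1$." For a complete multipartite graph the fundamental sets in the Ohsugi--Hibi description are exactly the parts $V_1,\ldots,V_n$, so the cone has precisely $d+n$ facets: the $d$ coordinate hyperplanes and one hyperplane per part. There is no facet for each pair $(I,J)$. If you enlarged the facet list in this way, the presentation $\ZZ^{d}\to\ZZ^{s}\to\Cl\to 0$ would have the wrong $s$ and would not reproduce $\Cl\cong\ZZ^n$. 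The $(I,J)$-indexed inequalities of $\calC(r_1,\ldots,r_n)$ arise differently: on the semi-open cube $(-1,0]^d$ the $d$ coordinate conditions force the corresponding entries of $\mathbf{a}$ to be $0$, and the remaining $n$ conic conditions $c_i-1<\langle\tau_{d+i},\xb\rangle\le c_i$ are then combined into $\ZZ$-linear combinations $\sum_{i\in I}\tau_{d+i}-\sum_{j\in J}\tau_{d+j}$, whose values on $(-1,0]^d$ are bounded by sums of the $r_i$; that is where every inequality of \eqref{conic_region}, constants included, comes from. Your plan recovers this once the matching step is done correctly, but the derivation is from combinations of the $n$ conic conditions, not from additional facets.

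For the reverse inclusion, your "Fourier--Motzkin style argument realizing every candidate lattice point by a real $\alpha$" is the right idea but should be made effective: by convexity it suffices to exhibit such an $\alpha$ for each \emph{vertex} of $\calC(r_1,\ldots,r_n)$ (this is the paper's Remark~\ref{rem:vertex}), and the explicit vertex list and witnesses are what make the case $n\le4$ tractable. Without pinning the argument to the vertices, the claim that every lattice point of $\calC$ is realized remains unverified.
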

%As the following remark says, the condition $n \leq 4$ appearing in Theorem~\ref{main2} is necessary for the edge ring $\kk[K_{r_1,\ldots,r_n}]$ to be Gorenstein. 
\begin{rem}\label{rem:Gor}%[{\cite[Remark 2.8]{OH00}}]
It is claimed in \cite[Remark 2.8]{OH00} that $\kk[K_{r_1,\ldots,r_n}]$ is Gorenstein if and only if 
\begin{itemize}
\item[(i)] $n=2$ with $r_1=1$ or $r_1=r_2$, or 
\item[(ii)] $n=3$ with $1 \leq r_1 \leq r_2 \leq r_3 \leq 2$, or 
\item[(iii)] $n=4$ with $r_1=\cdots=r_4=1$. 
\end{itemize}
This can be proved by using the facet descriptions of edge polytopes together with the technique employed in \cite{DH}. 
\end{rem}
%Note that the cases $n=2$ and $n=3$ with $r_1=1$ can be be deduced to Hibi rings by Proposition~\ref{prop:order}. 
%Thus, the essential cases of Theorem~\ref{main2} are $n=4$ and $n=3$ with $r_1 \geq 2$. 

It is known by \cite[Theorem 1.1]{DITW} that if a ring admits an NCCR, then it should be $\QQ$-Gorenstein. 
In particular, $\kk[K_{r_1,\ldots,r_n}]$ admitting an NCCR must be Gorenstein. 
Since Remark~\ref{rem} says that the essential cases of Theorem~\ref{main2} are $n=3$ with $r_1 \geq 2$ and $n=4$, 
for the investigation of the existence of an NCCR of $\kk[K_{r_1,\ldots,r_n}]$, 
our remaining tasks are the study of the edge rings of $K_{2,2,2}$ and $K_{1,1,1,1}=K_4$ by Remark~\ref{rem:Gor}. 
\begin{thm}\label{main3}
Let $R$ be the edge ring of $G=K_{2,2,2}$ or $G=K_4$. Let \begin{align*}\calL=
\{(0,0,-1), (0,0,0), (1,0,0), (1,-1,0),(1,0,1),(0,-1,-1),(1,-1,-1),(0,-1,-2)\}
\end{align*}
if $G=K_{2,2,2}$, and let 
\begin{align*}\calL=\{(0,0,0,0), (1,0,0,0), (1,0,0,1), (1,1,0,1), (1,1,1,2)\}\end{align*}
if $G=K_4$. Then $\displaystyle \End_R\left(\bigoplus_{\chi \in \calL} M_\chi \right)$ is an NCCR of $R$, respectively. 
\end{thm}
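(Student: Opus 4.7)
The strategy is to verify the three defining properties of an NCCR for
$\Lambda := \End_R(M)$ with $M := \bigoplus_{\chi \in \calL} M_\chi$: namely that (a) $M$ is reflexive, (b) $\Lambda$ is maximal Cohen--Macaulay (MCM) as an $R$-module, and (c) $\gldim \Lambda < \infty$. Reflexivity (a) is immediate since each $M_\chi$ is a divisorial ideal of the normal Gorenstein domain $R$ (by Remark~\ref{rem:Gor}).

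The first step is to identify each $M_\chi$ with $\chi \in \calL$ as a conic divisorial ideal. By Theorem~\ref{main2}, this is a finite combinatorial check: one verifies that $\calL \subset \calC(2,2,2) \cap \ZZ^3$ for $G = K_{2,2,2}$ and $\calL \subset \calC(1,1,1,1) \cap \ZZ^4$ for $G = K_4$, by substituting each $\chi \in \calL$ into the defining inequalities~\eqref{conic_region}.

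For condition (b), I use the canonical isomorphism
$\Hom_R(M_\chi, M_\psi) \cong M_{\psi - \chi}$
of rank-one reflexive $R$-modules; then $\Lambda$ decomposes, as an $R$-module, into the summands $M_{\psi - \chi}$ indexed by ordered pairs $(\chi, \psi) \in \calL \times \calL$. Hence (b) reduces to showing that each such difference $\psi - \chi$ still lies in $\calC(r_1,\ldots,r_n) \cap \ZZ^n$, so that the corresponding divisorial ideal is conic and therefore MCM. This is another finite verification using the explicit list $\calL$ given in the statement.

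The main obstacle is (c). My plan is to construct, for each $\chi \in \calL$, an explicit finite resolution
\[
0 \to \bigoplus_{j} M_{\chi_j^{(k)}} \to \cdots \to \bigoplus_{j} M_{\chi_j^{(1)}} \to \bigoplus_{j} M_{\chi_j^{(0)}} \to M_\chi \to 0
\]
with all $\chi_j^{(i)} \in \calL$ and $k \leq \dim R$, which remains exact after applying $\Hom_R(M,-)$; this translates into a projective resolution of the simple $\Lambda$-module at $\chi$. The building blocks are short exact sequences of divisorial ideals coming from the facet structure of the edge polytope $P_G$, in the spirit of the toric NCCR literature (see, e.g., \cite{HN, N, SpVdB}). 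For $G = K_4$ (where $\dim R = 4$ and $|\calL| = 5$) and $G = K_{2,2,2}$ (where $\dim R = 6$ and $|\calL| = 8$), the combinatorics is small enough that these resolutions can be produced by direct case analysis. Verifying exactness of each such resolution after applying $\Hom_R(M,-)$---equivalently, the vanishing of certain $\Ext^1$ groups---is the technical heart of the argument. Once (c) is established, combining it with (a) and (b) exhibits $\Lambda$ as an NCCR of $R$, as claimed.
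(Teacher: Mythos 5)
Your overall architecture (reflexivity, MCM-ness of the endomorphism ring via $\Hom_R(M_\chi,M_\psi)\cong M_{\psi-\chi}$, and finiteness of global dimension via finite resolutions of the simples) is the right shape, but there are two genuine gaps. The first is a concrete failure in step (b): you claim that every difference $\psi-\chi$ with $\chi,\psi\in\calL$ lies in $\calC(r_1,\ldots,r_n)\cap\ZZ^n$, but for $G=K_4$ this is false. Take $\psi=(1,1,1,2)$ and $\chi=(1,0,0,0)$: the difference $(0,1,1,2)$ violates the inequality $-r_4\leq z_1-z_4$ of $\calC(1,1,1,1)$ (here $z_1-z_4=-2<-1$), so $M_{(0,1,1,2)}$ is \emph{not} conic and its MCM-ness does not follow from Theorem~\ref{main2}. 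The paper handles exactly this point by enlarging the test set to $\widetilde{\calL}=\bigl(\calC(1,1,1,1)\cap\ZZ^4\bigr)\cup\{\pm(0,1,1,2)\}$ and verifying by a separate computation (in {\tt Macaulay2}) that the divisorial ideals corresponding to $\pm(0,1,1,2)$ are MCM. Without that extra verification your reduction of (b) to conicness does not close for $K_4$. (For $K_{2,2,2}$ all differences do remain conic, so your argument is fine there.)

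The second gap is that step (c), which you yourself identify as the technical heart, is only a plan: you assert that suitable finite complexes of modules $M_{\chi_j^{(i)}}$ with $\chi_j^{(i)}\in\calL$ exist and remain exact after $\Hom_R(M,-)$, but you neither construct them nor explain why the requisite $\Ext^1$-vanishing holds. The paper's route is to invoke the \v{S}penko--Van den Bergh machinery (Lemmas~\ref{lem:gldim}, \ref{lem:finite1}, \ref{lem:finite2}): for each $\chi\in\widetilde{\calL}\setminus\calL$ one exhibits a one-parameter subgroup $\lambda$ separating $\chi$ from $\calL$ such that all the shifted characters $\chi+\beta_{i_1}+\cdots+\beta_{i_p}$ with $\langle\beta_{i_j},\lambda\rangle>0$ already lie in the (iteratively enlarged) $\calL$; Lemma~\ref{lem:finite1} then supplies the acyclic complex for free, and the whole verification is an explicit finite table of pairs $(\chi,\lambda)$. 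Resolving the simple $\Lambda$-modules directly, as you propose, could in principle work, but as written it is a restatement of what must be proved rather than a proof; to complete the argument you would need to either produce and check those resolutions explicitly or import the separation lemmas that generate them automatically.
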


\subsection{Organization}
In Section~\ref{sec:poset}, we prove that the edge rings of $K_{r_1,r_2}$ and $K_{1,r_2,r_3}$ can be deduced to certain Hibi rings, respectively (Proposition~\ref{prop:order}). 
Since Hibi rings are the toric rings of certain lattice polytopes arising from posets, called order polytopes, 
and the proof of Proposition~\ref{prop:order} is based on the unimodular equivalence of order polytopes and chain polytopes for some posets (see Theorem~\ref{X}), 
we recall the notions of such poset polytopes. %(originally introduced by Stalney \cite{S86}). 
In Section~\ref{sec:facet}, for the computations of class groups and conic divisorial ideals of $\kk[K_{r_1,\ldots,r_n}]$, 
we recall the facet descriptions of edge polytopes. %(originally proved by Ohsugi--Hibi \cite{OH98}). 
In Section~\ref{sec:class}, we prove Theorem~\ref{main1}. 
In Section~\ref{sec:conic}, we prove Theorem~\ref{main2}. 
In Section~\ref{sec:NCCR}, we prove Theorem~\ref{main3}. 

%%%%%%%%%%%%%%%%%%%%%%%%%%%%%%%%%%%%%%%%%%%%%%%%%%%%%%%%%%%%%%%%%%%%%%%%%%
\subsection*{Acknowledgement} 
The authors would like to thank Yusuke Nakajima for a lot of his helpful comments on the results. 
The first named author is partially supported by JSPS Grant-in-Aid for Scientists Research (C) 20K03513. 
%%%%%%%%%%%%%%%%%%%%%%%%%%%%%%%%%%%%%%%%%%%%%%%%%%%%%%%%%%%%%%%%%%%%%%%%%%

\bigskip

%%%%%%%%%%%%%%%%%%%%%%%%%%%%%%%%%%%%%%%%%%%%%%%%%%%%%%%%%%%%%%%%%%%%%%%%%%%%%%%%%%%%%%%%%%%%%%%%%%%%%%%%%%%%%%%%%%%%%%%%%%%%%
%%%%%%%%%%%%%%%%%%%%%%%%%%%%%%%%%%%%%%%%%%%%%%%%%%%%%%%%%%%%%%%%%%%%%%%%%%%%%%%%%%%%%%%%%%%%%%%%%%%%%%%%%%%%%%%%%%%%%%%%%%%%%
%%%%%%%%%%%%%%%%%%%%%%%%%%%%%%%%%%%%%%%%%%%%%%%%%%%%%%%%%%%%%%%%%%%%%%%%%%%%%%%%%%%%%%%%%%%%%%%%%%%%%%%%%%%%%%%%%%%%%%%%%%%%%
%%%%%%%%%%%%%%%%%%%%%%%%%%%%%%%%%%%%%%%%%%%%%%%%%%%%%%%%%%%%%%%%%%%%%%%%%%%%%%%%%%%%%%%%%%%%%%%%%%%%%%%%%%%%%%%%%%%%%%%%%%%%%

\section{Two poset polytopes and Hibi rings}\label{sec:poset}
Before discussing the edge polytopes of graphs, we recall two polytopes arising from posets, called order polytopes and chain polytopes. 
The monoid $\kk$-algebras of order polytopes are called Hibi rings. 
As explained in Introduction, NCCRs of certain Hibi rings have been investigated in \cite{HN} and \cite{N}. 
We will see that some edge polytopes of complete multipartite graphs are unimodularly equivalent to some order polytopes (Proposition~\ref{prop:order}).

Let $\Pi=\{p_1,\ldots,p_{d-1}\}$ be a finite partially ordered set (poset, for short) equipped with a partial order $\prec$. 
For $p,q \in \Pi$, we say that \textit{$p$ covers $q$} if $q \prec p$ and there is no $p' \in \Pi \setminus \{p,q\}$ with $q \prec p' \prec p$. 
For a subset $I \subset \Pi$, we say that $I$ is a \textit{poset ideal} of $\Pi$ if $p \in I$ and $q \prec p$ then $q \in I$. 
For a subset $A \subset \Pi$, we call $A$ an \textit{antichain} of $\Pi$ if $p \not\prec q$ and $q \not\prec p$ for any $p,q \in A$ with $p \neq q$. 
Note that $\emptyset$ is regarded as a poset ideal and an antichain. Let \begin{align*}
\calO(\Pi)=\{(x_1,\ldots,x_{d-1}) \in \RR^{d-1} : \; x_i \geq x_j \text{ if } p_i \prec p_j \text{ in }\Pi, \;\; 
0 \leq x_i \leq 1 \text{ for }i=1,\ldots,d-1\}.
\end{align*}
A convex polytope $\calO(\Pi)$ is called the \textit{order polytope} of $\Pi$. 
It is known (\cite{S86}) that $\calO(\Pi)$ is a $(0,1)$-polytope and the vertices of $\calO(\Pi)$ one-to-one correspond to the poset ideals of $\Pi$. 
In fact, a $(0,1)$-vector $(a_1,\ldots,a_{d-1})$ is a vertex of $\calO(\Pi)$ if and only if $\{ p_i \in \Pi : a_i =1 \}$ is a poset ideal. 

%Let $\kk$ be a field. 
Given a poset $\Pi$, let $\kk[\Pi]$ be the $\kk$-algebra generated by those monomials corresponding to the lattice points in $\Pi$, i.e., 
$$\kk[\Pi]:=\kk[{\bf x}^\alpha t : \alpha \in \Pi \cap \ZZ^{d-1}],$$
where ${\bf x}^\alpha=x_1^{\alpha_1}\cdots x_{d-1}^{\alpha_{d-1}}$ for $\alpha = (\alpha_1,\ldots,\alpha_{d-1}) \in \ZZ^{d-1}$ 
and each ${\bf x}^\alpha t$ is defined to be degree $1$. 
The standard graded monoid $\kk$-algebra $\kk[\Pi]$ is called the \textit{Hibi ring} of $\Pi$. 
The following fundamental properties on Hibi rings were originally proved in \cite{H87}: 
\begin{itemize}
\item The Krull dimension of $\kk[\Pi]$ is $|\Pi|+1$; 
\item $\kk[\Pi]$ is a Cohen--Macaulay normal domain; 
\item $\kk[\Pi]$ is an algebra with straightening laws on $\Pi$. 
\end{itemize}

We also recall another polytope arising from $\Pi$, which is defined as follows: 
\begin{align*}
\calC(\Pi)=\{(x_1,\ldots,x_{d-1}) \in \RR^{d-1} : \;&x_i \geq 0 \text{ for }i=1,\ldots,d-1, \\
&x_{i_1}+\cdots+x_{i_k} \leq 1 \text{ for }p_{i_1} \prec \cdots \prec p_{i_k} \text{ in }\Pi\}.\end{align*} 
A convex polytope $\calC(\Pi)$ is called the \textit{chain polytope} of $\Pi$. 
Similarly to order polytopes, it is known (\cite{S86}) that $\calC(\Pi)$ is a $(0,1)$-polytope and the vertices of $\calC(\Pi)$ one-to-one correspond to the antichains of $\Pi$. 
%In fact, a $(0,1)$-vector $(a_1,\ldots,a_{d-1})$ is a vertex of $\calO(\Pi)$ if and only if $\{ p_i \in \Pi : a_i =1 \}$ is a poset ideal. 

\begin{thm}[{\cite[Theorem 2.1]{HL16}}]\label{X}
Let $\Pi$ be a poset. Then $\calO(\Pi)$ and $\calC(\Pi)$ are unimodularly equivalent if and only if $\Pi$ does not contain the ``X-shape'' subposet. 
\end{thm}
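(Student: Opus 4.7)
The plan is to prove the two directions separately, using rather different techniques.

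For the ``only if'' direction, I would argue by contrapositive and exhibit a combinatorial invariant that distinguishes $\calO(\Pi)$ from $\calC(\Pi)$ whenever $\Pi$ contains an X-shape subposet. A natural candidate is the number of facets. Recall that the facets of $\calO(\Pi)$ correspond to the covering relations of $\Pi$ together with the minimal and maximal elements, whereas the facets of $\calC(\Pi)$ correspond to the coordinate inequalities $x_i \geq 0$ together with the inequalities $\sum_{i \in C} x_i \leq 1$ indexed by the maximal chains $C$ of $\Pi$. Applied to the X-shape itself, which has $5$ elements, $4$ cover relations, $2$ minimal, $2$ maximal elements, and $4$ maximal chains, one counts $8$ facets for $\calO$ and $9$ for $\calC$, precluding any combinatorial equivalence. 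I would then argue that this local facet discrepancy cannot be canceled out when the X-shape sits as a subposet inside a larger $\Pi$, by bookkeeping the local facet contributions around the X-configuration.

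For the ``if'' direction, I would construct an explicit unimodular transformation modeled on Stanley's transfer map $\phi \colon \calO(\Pi) \to \calC(\Pi)$, defined componentwise by
\[
\phi(x)_i = x_i - \max\{x_j : p_j \text{ covers } p_i\},
\]
with the convention that the max over the empty set is $0$. In general $\phi$ is only piecewise linear: its non-linearity arises precisely when two distinct elements both cover a common element and must be compared. The X-shape is designed to encode exactly this obstruction. Under the assumption that $\Pi$ is X-free, I would show that one can consistently choose, for every non-maximal $p_i$, a distinguished covering element $p_{j(i)}$, so that the linear map $x_i \mapsto x_i - x_{j(i)}$ is unimodular and sends $\calO(\Pi)$ bijectively onto $\calC(\Pi)$. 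The construction proceeds by induction on $|\Pi|$: one removes an extremal element whose deletion preserves X-freeness, applies the induction hypothesis, and extends.

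The hardest step, I expect, is verifying that the candidate linear map actually surjects onto $\calC(\Pi)$: one must check that every maximal-chain inequality $\sum_{i \in C} x_i \leq 1$ of $\calC(\Pi)$ pulls back to a consequence of the defining inequalities of $\calO(\Pi)$, and this is precisely the step where X-avoidance is used in a genuinely combinatorial way. A second delicate point lies in the inductive set-up, where finding an extremal element whose removal preserves both X-freeness and the compatibility of the unimodular maps requires a case analysis based on whether the element is minimal or maximal and on the local structure of its covers in $\Pi$.
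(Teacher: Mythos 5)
This theorem is not proved in the paper at all --- it is imported from \cite{HL16} --- so your sketch has to be measured against the proof given there. Your ``only if'' direction is built on the right invariant and is in fact the strategy of \cite{HL16}: unimodular equivalence preserves the number of facets, and for the X-poset itself one gets $8$ facets for $\calO$ versus $9$ for $\calC$, exactly as you compute. The gap is the local-to-global step. ``Bookkeeping the local facet contributions around the X-configuration'' does not by itself rule out the possibility that the rest of $\Pi$ contributes \emph{more} facets to $\calO(\Pi)$ than to $\calC(\Pi)$ and cancels the discrepancy. What is actually needed, and what \cite{HL16} proves as the key lemma, is the global inequality that $\calC(\Pi)$ has at least as many facets as $\calO(\Pi)$ for \emph{every} finite poset $\Pi$, via an explicit injection from $\{\text{cover relations}\}\cup\{\text{minimal elements}\}\cup\{\text{maximal elements}\}$ into $\{\text{elements}\}\cup\{\text{maximal chains}\}$; one then shows an X forces this injection to miss some maximal chain. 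Without that lemma your argument does not close.

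The ``if'' direction as proposed fails already on the smallest X-free poset that is not a chain or an antichain. Take $\Pi=\{p_1,p_2,p_3\}$ with $p_1\prec p_2$ and $p_1\prec p_3$. With the paper's convention, $\calO(\Pi)=\{x_1\ge x_2,\ x_1\ge x_3,\ 0\le x_i\le 1\}$ has vertices $\mathbf{0},\eb_1,\eb_1+\eb_2,\eb_1+\eb_3,\eb_1+\eb_2+\eb_3$, while $\calC(\Pi)=\{x_i\ge 0,\ x_1+x_2\le 1,\ x_1+x_3\le 1\}$. The only non-maximal element is $p_1$, which has two covers, so your map is $(x_1,x_2,x_3)\mapsto(x_1-x_j,x_2,x_3)$ with $j\in\{2,3\}$. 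If $j=2$, the vertex $\eb_1+\eb_3$ of $\calO(\Pi)$ is sent to $(1,0,1)$, which violates $y_1+y_3\le 1$ and hence lies outside $\calC(\Pi)$; by symmetry $j=3$ fails on $\eb_1+\eb_2$. So no choice of distinguished covers works, even though $\Pi$ is X-free (a correct equivalence here is the affine map $(x_1,x_2,x_3)\mapsto(1-x_1,x_2,x_3)$, which is not of your form). The conceptual point is that the transfer map is genuinely non-linear on this X-free poset --- it uses different linear branches at $\eb_1+\eb_2$ and $\eb_1+\eb_3$ --- yet the polytopes are still unimodularly equivalent, so ``X-freeness lets one linearize the transfer map by a global choice of covers'' is not the mechanism. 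The construction in \cite{HL16} instead exploits the structural consequence of X-freeness that for each element either its down-set or its up-set is a chain, and applies different affine substitutions to the two kinds of elements; your induction would have to be rebuilt around that dichotomy.
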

Here, the ``X-shape'' poset is a poset $\{z_1,z_2,z_3,z_4,z_5\}$ equipped with the partial orders $z_1 \prec z_3 \prec z_4$ and $z_2 \prec z_3 \prec z_5$. 

Given positive integers $m$ and $n$, let $\Pi_{m,n}=\{p_1,\ldots,p_m,p_{m+1},\ldots,p_{m+n}\}$ be the poset equipped with the partial orders 
$p_1 \prec \cdots \prec p_m$ and $p_{m+1} \prec \cdots \prec p_{m+n}$. 
Moreover, let $\Pi_{m,n}'$ be the poset having an additional relation $p_1 \prec p_{m+n}$. 
Note that $\Pi_{m,n}$ is the poset appearing in \cite[Example 2.6]{HN} with $t=2$, $r_1=m$ and $r_2=n$ 
and its Hibi ring $\kk[\Pi_{m,n}]$ is isomorphic to the Segre product of 
the polynomial ring with $(m+1)$ variables and the polynomial ring with $(n+1)$ variables. 

We notice that both $\Pi_{m,n}$ and $\Pi_{m,n}'$ do not contain the X-shape subposet, 
so $\calO(\Pi_{m,n})$ (resp. $\calO(\Pi_{m,n}')$) is unimodularly equivalent to $\calC(\Pi_{m,n})$ (resp. $\calC(\Pi_{m,n}')$) by Theorem~\ref{X}. 

\begin{prop}\label{prop:order} Let $m,n$ be positive integers. 
\begin{itemize}
\item[(1)] The edge polytope $P_{K_{m+1,n+1}}$ is unimodularly equivalent to the order polytope $\calO(\Pi_{m,n})$. 
\item[(2)] The edge polytope $P_{K_{1,m,n}}$ is unimodularly equivalent to the order polytope $\calO(\Pi_{m,n}')$. 
\end{itemize}
In particular, the edge ring $\kk[K_{m+1,n+1}]$ (resp. $\kk[K_{1,m,n}]$) is isomorphic to the Hibi ring $\kk[\Pi_{m,n}]$ (resp. $\kk[\Pi_{m,n}']$). 
\end{prop}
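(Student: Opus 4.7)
The plan is to realize both edge polytopes and both order polytopes as variants of the product of simplices $\Delta^m \times \Delta^n$, and to deduce the desired unimodular equivalences from automorphisms of that product.

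For part (1), I would identify $P_{K_{m+1,n+1}}$ with $\Delta^m \times \Delta^n$ via the map $\eb_{u_i} + \eb_{v_j} \mapsto (\eb_{u_i},\eb_{v_j})$: the identity $\sum_{i,j} a_i b_j(\eb_{u_i}+\eb_{v_j}) = (\sum_i a_i \eb_{u_i},\sum_j b_j \eb_{v_j})$, valid whenever $\sum_i a_i = \sum_j b_j = 1$, shows this extends to a lattice affine isomorphism. On the poset side, $\Pi_{m,n}$ is a disjoint union of two chains, so $\calO(\Pi_{m,n}) = \calO(C_m) \times \calO(C_n)$ where $C_r$ is the chain on $r$ elements, and each factor $\calO(C_r)$ is unimodularly equivalent to $\Delta^r$ via Stanley's transfer map $y_k = x_k - x_{k+1}$ (with $y_r = x_r$). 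Composing these identifications proves part (1).

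For part (2), I would realize both $P_{K_{1,m,n}}$ and $\calO(\Pi_{m,n}')$ as $\Delta^m \times \Delta^n$ with one vertex removed, and then match the two missing vertices by a unimodular automorphism of the product. Since $K_{1,m,n}$ arises from $K_{m+1,n+1}$ by identifying $u_0$ and $v_0$ into the single vertex $w$ (the edge $\{u_0,v_0\}$ becoming a loop and being discarded), the linear projection $\pi\colon \RR^{(m+1)+(n+1)} \to \RR^{1+m+n}$ sending both $\eb_{u_0}$ and $\eb_{v_0}$ to $\eb_w$ is injective on the affine hull $\{\sum_i x_i = \sum_j y_j = 1\}$ of $\Delta^m \times \Delta^n$ and induces a bijection between the two affine lattices (one recovers the preimage by $x_{u_0} = 1 - \sum_{i\geq 1} x_{u_i}$ and $y_{v_0} = 1 - \sum_{j\geq 1} y_{v_j}$). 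Consequently $\pi$ carries the deleted-vertex polytope
\[
Q := \conv\bigl(V(\Delta^m \times \Delta^n) \setminus \{(\eb_{u_0},\eb_{v_0})\}\bigr)
\]
unimodularly onto $P_{K_{1,m,n}}$. On the other side, $\calO(\Pi_{m,n}') = \calO(\Pi_{m,n}) \cap \{x_1 \geq x_{m+n}\}$; a direct check shows the only vertex of $\calO(\Pi_{m,n})$ violating this inequality is the characteristic vector of the ideal $\{p_{m+1},\ldots,p_{m+n}\}$, which under part (1) corresponds to the product vertex $(\eb_{u_0},\eb_{v_n})$, and its two order-theoretic neighbours $\{p_1,p_{m+1},\ldots,p_{m+n}\}$ and $\{p_{m+1},\ldots,p_{m+n-1}\}$ both lie on the hyperplane $x_1 = x_{m+n}$, so no new vertex is created. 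Thus $\calO(\Pi_{m,n}')$ is likewise the convex hull of the vertices of $\Delta^m \times \Delta^n$ with $(\eb_{u_0},\eb_{v_n})$ removed. Applying the unimodular affine involution $\sigma = \mathrm{id}_{\Delta^m} \times \tau$, where $\tau$ is the involution of $\Delta^n$ swapping $\eb_{v_0}$ with $\eb_{v_n}$ and fixing $\eb_{v_1},\ldots,\eb_{v_{n-1}}$, interchanges the two missing vertices and carries $Q$ onto $\calO(\Pi_{m,n}')$, yielding $P_{K_{1,m,n}} \cong \calO(\Pi_{m,n}')$. The ring isomorphism in the final sentence of the proposition follows because unimodularly equivalent lattice polytopes define isomorphic toric rings.

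The main delicacy is verifying that $P_{K_{1,m,n}}$ and $\calO(\Pi_{m,n}')$ are genuinely the convex hulls of their claimed vertex sets after the relevant deletion, since in principle the merging projection (resp. the half-space cut) could introduce new extreme points. On the edge-polytope side this reduces to checking that $2\eb_w$ is not a convex combination of the remaining edge-polytope vertices (so the vertex we delete from $\Delta^m \times \Delta^n$ really does correspond to an honest vertex being lost); on the order-polytope side it is the observation above that the two neighbours of the forbidden ideal already lie on the separating hyperplane $x_1 = x_{m+n}$.
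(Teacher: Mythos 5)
Your argument is correct in substance, but it takes a genuinely different route from the paper. The paper never mentions products of simplices: it invokes the Hibi--Li criterion (Theorem~\ref{X}) to replace the order polytopes by the chain polytopes $\calC(\Pi_{m,n})$ and $\calC(\Pi_{m,n}')$, and then exhibits explicit coordinate projections and unimodular matrices identifying $P_{K_{m+1,n+1}}$ and $P_{K_{1,m,n}}$ with those chain polytopes, matching vertices with antichains. You instead identify $P_{K_{m+1,n+1}}$ with $\Delta^m\times\Delta^n$ directly, decompose $\calO(\Pi_{m,n})$ as a product of order polytopes of two chains and apply the transfer map, and for part (2) realize both $P_{K_{1,m,n}}$ and $\calO(\Pi_{m,n}')$ as the product of simplices with one product vertex deleted, matched by a vertex-permuting unimodular automorphism of $\Delta^m\times\Delta^n$. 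This is valid and arguably more self-contained: it avoids the chain-polytope detour and the appeal to \cite{HL16} entirely, and your merging projection $\pi$ (a lattice isomorphism on the affine hull $\{\sum_i x_{u_i}=\sum_j y_{v_j}=1\}$) gives a transparent reason why contracting $u_0,v_0$ to $w$ turns $\Delta^m\times\Delta^n$ minus the vertex $(\eb_{u_0},\eb_{v_0})$ into $P_{K_{1,m,n}}$; note that for this step you do not even need your worry about $2\eb_w$, since $Q$ is defined as a convex hull of vertices and $\pi$ is affinely injective there.

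One justification needs repair. To conclude that cutting $\calO(\Pi_{m,n})$ by $x_1\geq x_{m+n}$ creates no new vertices, the relevant neighbours of the deleted vertex $v^*=\chi_{\{p_{m+1},\ldots,p_{m+n}\}}$ are its neighbours in the edge graph of the polytope $\calO(\Pi_{m,n})$, not just the two ideals obtained by adding or removing a single element: $v^*$ has $m+n$ such neighbours, namely $\chi_J$ for $J=\{p_{m+1},\ldots,p_{m+l}\}$ with $0\leq l\leq n-1$ and for $J=\{p_1,\ldots,p_k\}\cup\{p_{m+1},\ldots,p_{m+n}\}$ with $1\leq k\leq m$. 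As it happens, all of these do lie on the hyperplane $x_1=x_{m+n}$, so your conclusion stands; but the cleanest fix bypasses the cut argument altogether: by the vertex description of order polytopes from \cite{S86} (already quoted in Section~\ref{sec:poset}), the vertices of $\calO(\Pi_{m,n}')$ are the indicator vectors of the poset ideals of $\Pi_{m,n}'$, and these are exactly the ideals of $\Pi_{m,n}$ other than $\{p_{m+1},\ldots,p_{m+n}\}$, which yields $\calO(\Pi_{m,n}')=\conv\bigl(V(\calO(\Pi_{m,n}))\setminus\{v^*\}\bigr)$ at once. With that substitution the proof goes through; the bookkeeping of exactly which product vertex corresponds to $v^*$ is immaterial, since products of vertex permutations of the two simplex factors are unimodular affine automorphisms of $\Delta^m\times\Delta^n$ acting transitively on its vertices.
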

\begin{proof}
%Since $\calO(\Pi_{m,n}) \cong \calC(\Pi_{m-1,n-1})$ (resp. $\calO(\Pi_{m,n}') \cong \calC(\Pi_{m,n}')$), 
It is enough to show that $P_{K_{m+1,n+1}}$ (resp. $P_{K_{1,m,n}}$) is unimodularly equivalent to  $\calC(\Pi_{m,n})$  (resp. $\calC(\Pi_{m,n}')$). 

\medskip

\noindent
(1) Let $V(K_{m+1,n+1})=[m+n+2]$ and let $E(K_{m+1,n+1})=\{\{i,j\} : 1 \leq i \leq m+1, m+2 \leq j \leq m+n+2\}$. 
%Note that $P_{K_{m+1,n+1}}$ is of dimension $m+n$. 
Then it is straightforward to see that the vertices of $P_{K_{m+1,n+1}}$ one-to-one correspond to the antichains of $\Pi_{m,n}$ 
by consider the projection $\RR^{m+n+2} \rightarrow \RR^{m+n}$ ignoring the $(m+1)$-th and $(m+n+2)$-th coordinates 
and this projection gives a unimodular transformation between $P_{K_{m+1,n+1}}$ and $\calC(\Pi_{m,n})$. 
%In fact, the edge $\{i,j\}$ corresponds to the antichain $\{p_i,p_{j-1}\}$ if $i \neq m$ and $j \neq m+n+2$, 
%$\{p_i\}$ if $i\neq m$ and $j=m+n$, and $\{p_j\}$ if $i=m$ and $j \neq m+n$, and $\emptyset$ if $(i,j)=(m,m+n)$. 

\medskip

\noindent
(2) Let $V(K_{1,m,n})=[m+n+1]$ and let $$E(K_{1,m,n})=\{\{i,j\} : 1 \leq i \leq m, m+1 \leq j \leq m+n\} \cup \{\{k,m+n+1\} : 1 \leq k \leq m+n\}.$$ 
%Note that $P_{K_{1,m.n}}$ is of dimension $m+n$. 
Consider the projection $\RR^{m+n+1} \rightarrow \RR^{m+n}$ by ignoring the $(m+n+1)$-th coordinate. 
Then the set of vertices of $P_{K_{1,m.n}}$ becomes $\{\eb_i+\eb_j : 1 \leq i \leq m, m+1 \leq j \leq m+n\} \cup \{\eb_k : 1 \leq k \leq m+n\}$. 
By applying a unimodular transformation $\begin{pmatrix}
1 &1 &\cdots &1 &  &  &       & \\
  &1 &       &  &  &  &       & \\
  &  &\ddots &  &  &  &       & \\
  &  &       &1 &  &  &       & \\
  &  &       &  &1 &  &       & \\
  &  &       &  &  &1 &       & \\
  &  &       &  &  &  &\ddots & \\
  &  &       &  &1 &1 &\cdots &1 
\end{pmatrix}$ to those vertices (from the left-hand side) and translating them by $-\eb_1-\eb_{m+n}$ and applying a unimodular transformation $\begin{pmatrix}
-1 &  &       &  & \\
   &1 &       &  & \\
   &  &\ddots &  & \\
   &  &       &1 & \\
   &  &       &  &-1
\end{pmatrix}$, the set of vertices becomes as follows: \begin{align*}
&\eb_i+\eb_j \mapsto \eb_1+\eb_i+\eb_j+\eb_{m+n} \mapsto \eb_i+\eb_j \mapsto \eb_i+\eb_j \; (1< i \leq m, \; m+1 \leq j < m+n) \\
&\eb_i+\eb_{m+n} \mapsto \eb_1+\eb_i+\eb_{m+n} \mapsto \eb_i \mapsto \eb_i \; (1 \leq i < m) \\
&\eb_1+\eb_j \mapsto \eb_1+\eb_j+\eb_{m+n} \mapsto \eb_j \mapsto \eb_j \; (m+1 \leq j < m+n), \quad
\eb_1+\eb_{m+n} \mapsto {\bf 0} \\
&\eb_k \mapsto \eb_1+\eb_k \mapsto \eb_k-\eb_{m+n} \mapsto \eb_k+\eb_{m+n} \; (1 \leq k \leq m), \;\; 
\eb_k \mapsto \eb_1+\eb_k \;(m+1 \leq k \leq m+n). 
%
%
%\{\eb_i+\eb_j,\eb_i+\eb_{m+n},\eb_1+\eb_j : 2 \leq i \leq m, m+1 \leq j \leq m+n-1\} \cup \{\eb_k : 1 \leq k \leq m+n\} \cup \{{\bf 0}\}. 
\end{align*}
We can directly see that these lattice points one-to-one correspond to the antichains of $\Pi_{m,n}'$. 
\end{proof}

\bigskip

%%%%%%%%%%%%%%%%%%%%%%%%%%%%%%%%%%%%%%%%%%%%%%%%%%%%%%%%%%%%%%%%%%%%%%%%%%%%%%%%%%%%%%%%%%%%%%%%%%%%%%%%%%%%%%%%%%%%%%%%%%%%%
%%%%%%%%%%%%%%%%%%%%%%%%%%%%%%%%%%%%%%%%%%%%%%%%%%%%%%%%%%%%%%%%%%%%%%%%%%%%%%%%%%%%%%%%%%%%%%%%%%%%%%%%%%%%%%%%%%%%%%%%%%%%%
%%%%%%%%%%%%%%%%%%%%%%%%%%%%%%%%%%%%%%%%%%%%%%%%%%%%%%%%%%%%%%%%%%%%%%%%%%%%%%%%%%%%%%%%%%%%%%%%%%%%%%%%%%%%%%%%%%%%%%%%%%%%%
%%%%%%%%%%%%%%%%%%%%%%%%%%%%%%%%%%%%%%%%%%%%%%%%%%%%%%%%%%%%%%%%%%%%%%%%%%%%%%%%%%%%%%%%%%%%%%%%%%%%%%%%%%%%%%%%%%%%%%%%%%%%%

\section{Facets of edge polytopes}\label{sec:facet}
In this section, we recall the description of the facets of edge polytopes from \cite{OH98}. 
Let $G$ be a graph on the vertex set $V(G)=[d]$. 

First, we recall some notions and notation from graph theory. 
For a subset $W \subset V(G)$, let $G_W$ be the induced subgraph with respect to $W$. 
For a vertex $v$, we denote by $G \setminus v$ instead of $G_{V(G) \setminus \{v\}}$. 
Similarly, for $S \subset V(G)$, we denote by $G \setminus S$ instead of $G_{V(G) \setminus S}$. 
We say that $T \subset V(G)$ an \textit{independent set} (or \textit{stable set}) if $\{v,w\} \not\in E(G)$ for any two vertices $v,w \in T$. 
Given $v \in V(G)$, let $N_G(v)=\{w \in V(G) : \{v,w\} \in E(G)\}$. For $S \subset V(G)$, let $N_G(S)=\bigcup_{v \in S}N_G(v)$.

The following terminologies are used in \cite{OH98}: 
\begin{itemize}
\item We call a vertex $v$ of $G$ \textit{regular} if each connected component of $G \setminus v$ contains an odd cycle. 
\item Given an independent set $T \subset V(G)$, let $B(T)$ denote the bipartite graph on $T \cup N_G(T)$ with the edge set $\{\{v,w\} : v \in T, w \in N_G(T)\} \cap E(G)$. 
\item A nonempty $T \subset V(G)$ is said to be a \textit{fundamental set} if the following conditions are satisfied: 
\begin{itemize}
\item $B(T)$ is connected; 
\item $V(B(T))=V(G)$, or each connected component of $G \setminus T \cup N_G(T)$ contains an odd cycle. 
\end{itemize}
\end{itemize}

Given $i \in [d]$, let 
$$\calH_i=\{(x_1,\ldots,x_d) \in \RR^d : x_i=0\} \;\text{ and }\;\calH_i^{(+)}=\{(x_1,\ldots,x_d) \in \RR^d : x_i \geq 0\}.$$ 
Given $T \subset [d]$, let 
\begin{align*}
\calH_T&=\left\{(x_1,\ldots,x_d) \in \RR^d : \sum_{j \in N_G(T)}x_j - \sum_{i \in T}x_i = 0\right\} \;\text{ and }\\
\calH_T^{(+)}&=\left\{(x_1,\ldots,x_d) \in \RR^d : \sum_{j \in N_G(T)}x_j - \sum_{i \in T}x_i \geq 0\right\}.
\end{align*}
It is proved in \cite[Theorem 1.7 (a)]{OH98} that for any non-bipartite graph $G$, each facet of $P_G$ is defined by 
a supporting hyperplane $H_i$ for some regular vertex $i$ or $H_T$ for some fundamental set. Let 
%\begin{align*}
%P_G=\bigcap_{\text{$i$ : regular vertices}}H_i^{(+)} \cap \bigcap_{\text{$T$ : fundamental sets}}H_T^{(+)}
%\end{align*}
$$\widetilde{\Psi}=\{H_i : \text{$i$ is a regular vertex}\} \cup \{ H_T : \text{$T$ is a fundamental set}\}.$$ 
Although $\widetilde{\Psi}$ describes all supporting hyperplanes of the facets of $P_G$, 
it might happen that $H_i$ and $H_T$ define the same facet for some $i$ and $T$. 
Each hyperplane in $\widetilde{\Psi}$ can be identified with a linear form in $(\RR^d)^* \cong \RR^d$ as follows. 
Let $\langle \cdot,\cdot \rangle : (\RR^d)^* \times \RR^d \rightarrow \RR$ be a natural pairing. 
Note that the linear form which gives a hyperplane in $\widetilde{\Psi}$ is not uniquely determined, 
but we can define a unique linear form $\ell_H \in \QQ^d$ for each hyperplane $H \in \widetilde{\Psi}$ with the following condition: 
\begin{itemize}
\item[(i)] $\langle \ell_H, \alpha \rangle \in \ZZ$ for any $\alpha \in P_G \cap \ZZ^d$; \quad
(ii) $\sum_{\alpha \in P_G \cap \ZZ^d}\langle \ell_H,\alpha\rangle\ZZ=\ZZ$. 
\end{itemize}

Let $\Psi=\Psi_r \cup \Psi_f$, where \begin{align*}
\Psi_r=\{\ell_{H_i} : i \text{ is a regular vertex}\} \text{ and } \Psi_f=\{\ell_{H_T} : T \text{ is a fundamental set}\}. 
\end{align*}

\begin{ex}\label{ex1}
Consider $G=K_{2,2,2}$. Let $V(G)=[6]=\{1,2\} \cup \{3,4\} \cup \{5,6\}$ and 
\begin{align}\label{edge_order}
E(G)=\{\{1,3\},\{1,4\},\{1,5\},\{1,6\},\{2,3\},\{2,4\},\{2,5\},\{2,6\},\{3,5\},\{3,6\},\{4,5\},\{4,6\}\}. 
\end{align}
%$$E(G)=\{\{i,j\},\{i,k\},\{j,k\} : i \in \{1,2\}, j \in \{3,4\}, k \in \{5,6\}\}.$$
We see that each $i \in [6]$ is regular, and $T \subset V(G)$ is a fundamental set if and only if $T=\{i,i+1\}$ for $i=1,3,5$. 
\begin{itemize}
\item For each regular vertex $i \in [6]$, we have $\ell_{H_i}=\eb_i \in \RR^6$. 
\item For each fundamental set $T=\{i,i+1\}$, we have $\ell_{H_T}=\frac{1}{2}(\sum_{p \in [6]\setminus T}\eb_p - \sum_{q \in T}\eb_q)$. 
\end{itemize}
Thus, $\Psi$ consists of the column vectors of the following matrix: 
$$\begin{pmatrix}
1 &0 &0 &0 &0 &0 &-\frac{1}{2} &\frac{1}{2} &\frac{1}{2} \\
0 &1 &0 &0 &0 &0 &-\frac{1}{2} &\frac{1}{2} &\frac{1}{2} \\
0 &0 &1 &0 &0 &0 &\frac{1}{2} &-\frac{1}{2} &\frac{1}{2} \\
0 &0 &0 &1 &0 &0 &\frac{1}{2} &-\frac{1}{2} &\frac{1}{2} \\
0 &0 &0 &0 &1 &0 &\frac{1}{2} &\frac{1}{2} &-\frac{1}{2} \\
0 &0 &0 &0 &0 &1 &\frac{1}{2} &\frac{1}{2} &-\frac{1}{2} \\
\end{pmatrix}.$$
\end{ex}

\bigskip

%%%%%%%%%%%%%%%%%%%%%%%%%%%%%%%%%%%%%%%%%%%%%%%%%%%%%%%%%%%%%%%%%%%%%%%%%%%%%%%%%%%%%%%%%%%%%%%%%%%%%%%%%%%%%%%%%%%%%%%%%%%%%
%%%%%%%%%%%%%%%%%%%%%%%%%%%%%%%%%%%%%%%%%%%%%%%%%%%%%%%%%%%%%%%%%%%%%%%%%%%%%%%%%%%%%%%%%%%%%%%%%%%%%%%%%%%%%%%%%%%%%%%%%%%%%
%%%%%%%%%%%%%%%%%%%%%%%%%%%%%%%%%%%%%%%%%%%%%%%%%%%%%%%%%%%%%%%%%%%%%%%%%%%%%%%%%%%%%%%%%%%%%%%%%%%%%%%%%%%%%%%%%%%%%%%%%%%%%
%%%%%%%%%%%%%%%%%%%%%%%%%%%%%%%%%%%%%%%%%%%%%%%%%%%%%%%%%%%%%%%%%%%%%%%%%%%%%%%%%%%%%%%%%%%%%%%%%%%%%%%%%%%%%%%%%%%%%%%%%%%%%
%%%%%%%%%%%%%%%%%%%%%%%%%%%%%%%%%%%%%%%%%%%%%%%%%%%%%%%%%%%%%%%%%%%%%%%%%%%%%%%%%%%%%%%%%%%%%%%%%%%%%%%%%%%%%%%%%%%%%%%%%%%%%
%%%%%%%%%%%%%%%%%%%%%%%%%%%%%%%%%%%%%%%%%%%%%%%%%%%%%%%%%%%%%%%%%%%%%%%%%%%%%%%%%%%%%%%%%%%%%%%%%%%%%%%%%%%%%%%%%%%%%%%%%%%%%

%\section{Proof of Theorem~\ref{main1}}\label{sec:class}
\section{Class groups of edge rings of complete multipartite graphs}\label{sec:class}

In this section, we give a proof of Theorem~\ref{main1}. Namely, we compute the class groups of $\kk[K_{r_1,\ldots,r_n}]$. 
For this, we recall the general description of the class groups of monoid $\kk$-algebras (\cite[Theorem 9.8.19]{Villa}). 
We use the theory in \cite[Section 9.8]{Villa} and apply it to get the class group $\Cl(\kk[G])$ of $\kk[G]$.

Let $G$ be a graph. 
Given $\alpha \in P_G \cap \ZZ^d$, we define $w_\alpha$ belonging to a free abelian group $\bigoplus_{\ell \in \Psi} \ZZ \eb_\ell$ 
with its basis $\{\eb_\ell\}_{\ell \in \Psi}$ as follows: 
$$w_\alpha=\sum_{\ell \in \Psi}\langle \ell,\alpha \rangle \eb_\ell.$$
Let $\calM$ be the matrix whose column vectors consist of $w_\alpha$ for $\alpha \in P_G \cap \ZZ^d$. 
%Note that $P_G \cap \ZZ^d$ one-to-one corresponds to $E(G)$ by definition. 
%In \cite[Section 9.8]{Villa}, the divisor class groups of monoid algebras are discussed. By using those theories, we see the following: 
%
\begin{prop}[cf. {\cite[Theorem 9.8.19]{Villa}}]\label{prop:class_group}
Let $G$ be a non-bipartite graph such that $\kk[G]$ is normal and assume that $\Psi$ is irredundant. Then 
$$\Cl(\kk[G]) \cong \bigoplus_{\ell \in \Psi} \ZZ \eb_\ell \big/ \sum_{\alpha \in P_G \cap \ZZ^d} \ZZ w_\alpha.$$ 
In particular, we have 
$$\Cl(\kk[G]) \cong \ZZ^t \oplus \ZZ/d_1\ZZ \oplus \cdots \oplus \ZZ/d_s\ZZ,$$ 
where $t=|\Psi|-\rank \calM$ and $d_1,\ldots,d_s$ are positive integers appearing in the diagonal of the Smith normal form of $\calM$. 
\end{prop}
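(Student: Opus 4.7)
The plan is to specialize \cite[Theorem~9.8.19]{Villa}, which expresses the class group of an arbitrary normal affine monoid algebra as the cokernel of a ``facet-valuation'' map, to the setting of the edge ring $\kk[G]$, and then to read off the invariant-factor decomposition from the Smith normal form of the resulting matrix $\calM$.

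First, I would invoke Villarreal's theorem in the following shape: for a normal affine monoid $M \subset \ZZ^d$ whose cone $\RR_{\geq 0} M$ has facets $F_1,\dots,F_s$ with primitive integer inner normals $\ell_1,\dots,\ell_s$,
\begin{equation*}
\Cl(\kk[M]) \;\cong\; \bigoplus_{i=1}^{s}\ZZ\eb_{\ell_i} \Big/ \left\langle \sum_{i=1}^{s}\langle \ell_i,\alpha\rangle \eb_{\ell_i} : \alpha \in M \right\rangle,
\end{equation*}
where each summand $\ZZ\eb_{\ell_i}$ is generated by the class of the height-one prime $\fkp_{F_i}$ and the relations come from the fact that each monomial $\tb^\alpha$ is a unit in the function field.

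Second, I would match the data of Villarreal's theorem with the data defined in Section~\ref{sec:facet}. By \cite[Theorem~1.7(a)]{OH98} the facets of the cone over $P_G$ are in bijection with $\widetilde{\Psi}$; the irredundancy of $\Psi$ upgrades this to a bijection between $\Psi$ itself and the set of facets (preventing two hyperplanes in $\widetilde{\Psi}$ from cutting out the same facet); and the normalization conditions (i) and (ii) imposed on each $\ell_H$ are precisely what force $\ell_H$ to be the primitive integer facet normal used in Villarreal's formulation. Under these identifications the map $\phi(\alpha) = \sum_i \langle \ell_i,\alpha\rangle \eb_{\ell_i}$ coincides with $\alpha \mapsto w_\alpha$, and since $\{\rho(e) : e \in E(G)\} \subset P_G \cap \ZZ^d$ already generates the $\ZZ$-span of the monoid, restricting the index $\alpha$ to $P_G \cap \ZZ^d$ does not change the subgroup of relations. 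The first displayed formula drops out.

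Third, to obtain the decomposition $\ZZ^t \oplus \ZZ/d_1\ZZ \oplus \cdots \oplus \ZZ/d_s\ZZ$, I would interpret $\calM$ as the matrix of the $\ZZ$-linear map $\ZZ^{|P_G \cap \ZZ^d|} \to \ZZ^{|\Psi|}$ whose columns are the vectors $w_\alpha$; the structure theorem for finitely generated abelian groups, applied to the cokernel of $\calM$, then yields the free rank $t = |\Psi|-\rank\calM$ and the invariant factors $d_1,\dots,d_s$ directly from the Smith normal form of $\calM$. The main obstacle, such as it is, is the verification that conditions (i) and (ii) on $\ell_H$ really match the primitive-integer facet-normal normalization implicit in \cite[Theorem~9.8.19]{Villa}, so that each summand $\ZZ\eb_\ell$ is genuinely generated by the divisor class $[\fkp_\ell]$ and no spurious torsion is introduced; once this matching is in place, the rest is a direct transcription.
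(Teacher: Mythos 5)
Your proposal is correct and follows the same route the paper intends: the paper gives no independent argument for this proposition but simply specializes \cite[Theorem 9.8.19]{Villa}, and your verification that irredundancy of $\Psi$ matches facets bijectively, that conditions (i)--(ii) normalize each $\ell_H$ to be primitive on the lattice $\ZZ\{\rho(e):e\in E(G)\}$ generated by the monoid, and that the generators $\rho(e)$ suffice to present the relation subgroup, is exactly the bookkeeping needed to make that specialization legitimate. The Smith normal form step is the standard structure theorem applied to $\operatorname{coker}\calM$, as in the paper.
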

It is proved in \cite{OH98} and \cite{SVV} that the edge ring $\kk[G]$ is normal if and only if $G$ satisfies odd cycle condition, 
where we say that $G$ satisfies {\em odd cycle condition} if for each pair of odd cycles $C$ and $C'$ with no common vertex, 
there is an edge $\{v,v'\}$ with $v \in V(C)$ and $v' \in V(C')$. 
\begin{ex}\label{ex2}
Let us consider $G=K_{2,2,2}$ again. Then $|\Psi|=9$ as we saw in Example~\ref{ex1}. 
One can compute $\calM$ as follows: 
\begin{align}\label{calM}
\calM=\left(
\begin{array}{rrrrrrrrrrrr}
1 &1 &1 &1 &0 &0 &0 &0 &0 &0 &0 &0 \\
0 &0 &0 &0 &1 &1 &1 &1 &0 &0 &0 &0 \\
1 &0 &0 &0 &1 &0 &0 &0 &1 &1 &0 &0 \\
0 &1 &0 &0 &0 &1 &0 &0 &0 &0 &1 &1 \\
0 &0 &1 &0 &0 &0 &1 &0 &1 &0 &1 &0 \\
0 &0 &0 &1 &0 &0 &0 &1 &0 &1 &0 &1 \\
0 &0 &0 &0 &0 &0 &0 &0 &1 &1 &1 &1 \\
0 &0 &1 &1 &0 &0 &1 &1 &0 &0 &0 &0 \\
1 &1 &0 &0 &1 &1 &0 &0 &0 &0 &0 &0 
\end{array}
\right).
\end{align}
The columns are labeled by $P_G \cap \ZZ^d$, i.e., labeled by $E(G)$ in the ordering of \eqref{edge_order}. 
A direct computation implies that this matrix has rank $6$ and all diagonals of its Smith normal form are $1$. 
Hence, we conclude that $\Cl(\kk[G]) \cong \ZZ^{9-6}=\ZZ^3$. 
\end{ex}

Now, we are in the position to give a proof of Theorem~\ref{main1}. 

\begin{proof}[Proof of Theorem~\ref{main1}]
Let $G=K_{r_1,\ldots,r_n}$ on the vertex set $\bigsqcup_{i=1}^nV_i$ with the edge set 
$\{\{a,b\} : a \in V_i, b \in V_j \text{ for }1 \leq i \neq j \leq n\}$. Then we see that $G$ satisfies odd cycle condition. 
Hence, we can apply Proposition~\ref{prop:class_group}. 
Let $V_i=\{v_{i1},\ldots,v_{ir_i}\}$ for $i=1,\ldots,n$ and let $d=|V(G)|=\sum_{i=1}^n r_i$.

We see that any vertex $v$ in $G$ is regular. 
Moreover, we also see that $T \subset V(G)$ is a fundamental set if and only if $T=V_i$ for each $i=1,\ldots,n$. 
Hence, $\Psi$ is as follows: 
%the system of the supporting hyperplanes of $P_G$ is as follows:
\begin{equation}\label{eq:hyp_P_G}
%\begin{split}
%&x_i \geq 0 \;\text{ for }\;i=1,\ldots,d, \\
%&\sum_{k \in V(G) \setminus V_i}x_k - \sum_{j \in V_i}x_j \geq 0 \;\text{ for }\;i=1,\ldots,n, 
%\end{split}
\Psi_r = \{\eb_i :i=1,\ldots,d\} \;\text{ and }\;
\Psi_f=\left\{\frac{1}{2}\left(\sum_{k \in V(G) \setminus V_i}\eb_k - \sum_{j \in V_i}\eb_j\right) : i=1,\ldots,n \right\}. 
\end{equation}
(See Example~\ref{ex1} in the case $n=3$ with $r_1=r_2=r_3=2$.) 
Furthermore, it follows that $\widetilde{\Psi}$ is irredundant. 

Take any edge $e=\{a,b\} \in V_i \times V_j$. Then we see that 
\begin{align}\label{eq:description}
w_{\rho(e)}=\eb_{\ell_a}+\eb_{\ell_b}+\sum_{\ell \in \Psi_f \setminus \{\ell_{V_a},\ell_{V_b}\}}\eb_\ell
\end{align}
holds. Note that $\calM$ consists of column vectors $w_{\rho(e)}$. By Proposition~\ref{prop:class_group}, it suffices to show that 
the Smith normal form of $\calM$ is of the form whose diagonals are $d$ $1$'s, in particular, $\rank \calM=d$. 
Once we know this, the assertion holds by $|\Psi|-\rank \calM=(d+n)-d=n$. 

\medskip

In what follows, we divide our proof into three steps (a), (b) and (c): 
\begin{itemize}
\item[(a)] First, we find $d$ linearly independent column vectors of $\calM$. 
\item[(b)] Next, we show that such vectors are maximal one. 
\item[(c)] Finally, we show that all diagonals of the Smith normal form of $\calM$ are $1$. 
\end{itemize}

\medskip

\noindent
(a) Let $n$ be odd. Consider the following $d=nr_1+\sum_{i=2}^n(r_i-r_1)$ edges: 
\begin{equation}\label{eq:vectors}
\begin{split}
&\{v_{1j},v_{2j}\}, \{v_{2j},v_{3j}\}, \ldots, \{v_{n-1j},v_{nj}\}, \{v_{nj},v_{1j}\} \;\text{ for }\;j=1,\ldots,r_1, \\
&\{v_{11},v_{ij}\} \;\text{ for }\;i=2,\ldots,n \text{ and }j=r_1+1,\ldots,r_i. 
\end{split}
\end{equation}
(Remark that $r_1 \leq \cdots \leq r_n$.) Let $C_j=(v_{1j},v_{2j},\ldots,v_{nj})$ be the cycle of length $n$ for $j=1,\ldots,r_1$. 
%In what follows, we will show that $w_{\rho(e)}$'s for those edges $e$ are linearly independent and those are maximal. 
Since $n$ is odd, we see that $\rho(e)$'s for $e \in E(C_j)$ are linearly independent. 
We notice that the column vectors of $\calM$ restricted to the rows corresponding to $\Psi_r$ precisely correspond to $P_G \cap \ZZ^d$. 
Moreover, $v_{ij}$'s for $i=2,\ldots,n$ and $j=r_1+1,\ldots,r_i$ appear only once among the edges of \eqref{eq:vectors}. 
Therefore, we see that the column vectors of $\calM$ indexed by the edges in \eqref{eq:vectors} are also linearly independent. 

\medskip

In the case $n$ is even, we may take the following $d=(n-1)r_1+\sum_{i=2}^{n-1}(r_i-r_1)+r_n$ edges instead of \eqref{eq:vectors}: 
\begin{equation*}%\label{eq:vectors}
\begin{split}
&\{v_{1j},v_{2j}\}, \{v_{2j},v_{3j}\}, \ldots, \{v_{n-2j},v_{n-1j}\}, \{v_{n-1j},v_{1j}\} \;\text{ for }\;j=1,\ldots,r_1, \\
&\{v_{11},v_{ij}\} \;\text{ for }\;i=2,\ldots,n-1 \text{ and }j=r_1+1,\ldots,r_i, \\
&\{v_{11},v_{ni}\} \;\text{ for }\;i=1,\ldots,r_n. 
\end{split}
\end{equation*}
The similar discussions to the above ones can be applied to those edges. 

\medskip

\noindent
(b) Next, we prove that $\rank \calM=d$. 
Namely, it suffices to show that linearly dependent vectors appear once we add one more edge $e=\{v_{ij},v_{k\ell}\}$ into \eqref{eq:vectors}. 
Since the case $n$ is even is quite similar, we discuss only the case $n$ is odd. 

We divide $e$ into the following five cases: 
\begin{itemize}
\item[(i)] $e$ is a chord of $C_i$; 
\item[(ii)] $e$ is a bridge between $C_i$ and $C_j$; 
\item[(iii)] $e$ is an edge between a vertex of some $C_1 \setminus v_{11}$ and a vertex other than $V(C_i)$; 
\item[(iv)] $e$ is an edge between a vertex of some $C_i$ with $i \neq 1$ and a vertex other than $V(C_i)$; 
\item[(v)] $e$ is an edge between other vertices. 
\end{itemize}
Lemma~\ref{lem:linear_depend} below implies that linearly depndent vectors appear for any cases (i)--(v) as follows.
\begin{itemize}
\item[(i)] Since $C_i$ is an odd cycle, we see that an even cycle appears by adding a chord. 
Hence, the linearly dependent vectors appear by Lemma~\ref{lem:linear_depend} (a). 
\item[(ii)] In this case, the assertion holds by Lemma~\ref{lem:linear_depend} (b-2). 
\item[(iii)] Let, say, $e=\{v_{21},v_{2,r_1+1}\}$. %v21 ... vn1 v11 v2r_1+1 v21
Then $(v_{21},v_{31},\ldots,v_{n1},v_{11},v_{2,r_1+1})$ forms an even cycle. 
\item[(iv)] Let, say, $e=\{v_{12},v_{2,r_1+1}\}$. 
Then $(v_{11},v_{2,r_1+1},v_{12})$ forms a path connecting $C_1$ and $C_2$. 
Hence, the assertion follows by Lemma~\ref{lem:linear_depend} (b-2). 
%C_1 + path v_11v_{2r_1+1}v_12 + C_2
\item[(v)] Let, say, $e=\{v_{2,r_1+1},v_{3,r_1+1}\}$. 
Since $(v_{11},v_{2,r_1+1},v_{3,r_1+1})$ forms an odd cycle which shares a unique common vertex $v_{11}$ with $C_1$, 
we obtain the assertion by Lemma~\ref{lem:linear_depend} (b-1). 
\end{itemize}
%C_1 + v_11 v v' v_11 

\medskip

\noindent
(c) Finally, we compute the Smith normal form of $\calM$. Let $n$ be odd. (The case $n$ is even is similar.) 
For the computation, we add the following edges to \eqref{eq:vectors}: 
\begin{align*}
\{v_{11},v_{22}\},\{v_{12},v_{23}\},\ldots,\{v_{1,r_1-1},v_{2,r_1}\}. 
\end{align*}
Let $\calM'$ be the $(d+n) \times (d+r_1-1)$ submatrix of $\calM$ consisting of the columns corresponding to \eqref{eq:vectors} and those additional edges. 
Since we know that $\rank \calM=d$ by the steps (a) and (b), it is enough to show that the Smith normal form of $\calM'$ has $d$ $1$'s as diagonals. 

For the columns corresponding to the edges $\{v_{11},v_{ij}\}$ for $i=2,\ldots,n$ and $j=r_1+1,\ldots,r_i$, 
since $1$ in the $v_{ij}$-th row appears only once in $\calM'$, we can create $\sum_{i=2}^n(r_i-r_1)$ unit vectors. 
Let $\calM''$ be the $(d+n) \times (nr_1+r_1-1)$ submatrix of $\calM'$ whose columns are indexed by the edges of $C_i$'s and the additional edges. 
Our work is to show that the Smith normal form of $\calM''$ is of the form that there are $nr_1$ $1$'s in the diagonal. 
Note that we already know that $\rank \calM'' = nr_1$. 

We apply the following row and column operations to $\calM''$: 
\begin{enumerate}
\item Add the $(-1)\cdot$($\{v_{11},v_{21}\}$-th column) (i.e. $(-1) \cdot$(the first column of $C_1$)) to the $\{v_{11},v_{22}\}$-column, 
and add the $v_{21}$-th row (i.e. the second row of $C_1$) to the $v_{22}$-th row (i.e. the second row of $C_2$). 
Then the first and second (i.e. $\{v_{11},v_{21}\}$-th and $\{v_{21},v_{31}\}$-th) entries of the $v_{21}$-th row are $1$, 
its $\{v_{11},v_{22}\}$-th entry is $-1$, and the other entries are all $0$. 
Note that the $v_{21}$-th entry of $\{v_{11},v_{22}\}$-th column is $-1$ and the other entries are all $0$. 
Thus, we can erase the other entries of $v_{21}$-th row without changing any other entries. 
\item Apply the following operations: 
\begin{itemize}
\item[(2-1)] Add the $(-1) \cdot (\{v_{22},v_{32}\}$-th column), $\{v_{32},v_{42}\}$-th column,..., and $(-1) \cdot (\{v_{n-1,2},v_{n2}\}$-th column) 
to the $\{v_{11},v_{21}\}$-th column; 
\item[(2-2)] Add the $(-1) \cdot (\{v_{11},v_{21}\}$-th column) to the $\{v_{n1},v_{11}\}$-th column; 
\item[(2-3)] Add the $(-1) \cdot (\{v_{n1},v_{11}\}$-th column) to the $\{v_{n-1,1},v_{n1}\}$-th column, 
add the $(-1) \cdot (\{v_{n-1,1},v_{n1}\}$-th column) to the $\{v_{n-2,1},v_{n-1,1}\}$-th column,...,and 
add the $(-1) \cdot (\{v_{41},v_{51}\}$-th column) to the $\{v_{31},v_{41}\}$-th column. 
%Namely, add the columns of $C_2$ from the second one to the $(n-1)$-th one alternatingly to the first column of $C_1$, 
%and add the $-1$ times the first column of $C_1$ to the last column of $C_1$. 
\end{itemize}

Then the rows corresponding to $v_{11},\ldots,v_{n1}$ (i.e. the vertices in $C_1$) contain exactly one $1$ or $-1$. 
Thus, we can erase all other entries. 
\item Note that the operations (1) and (2) do not change the column vectors except for the ones corresponding to the edges of $C_1$ and $\{v_{11},v_{22}\}$. 
Thus, in general, by using the edges of $C_i$ and $\{v_{1i},v_{2,i+1}\}$, we can do the same operations as above for each $i=1,\ldots,r_1-1$. 
After those applications, $n \cdot (r_1-1)$ $1$'s can appear in the diagonal by arranging the rows and columns. 
\item Finally, we consider $C_{r_1}$. 
By using the $n$ rows corresponding to $\Psi_f$, we can erase the nonzero entries corresponding $v_{r_11},\ldots,v_{r_1n} \in V(C_{r_1})$. 
Then $nr_1$ $1$'s appear in the diagonal by changing the rows. By $\rank \calM'' = nr_1$, we obtain the desired Smith normal form. 
\end{enumerate}
\end{proof}
\begin{ex}
Consider the matrix \eqref{calM}. The part (c) in the above proof takes the following left-most submatrix: 
\begin{align*}
\begin{pmatrix}
1 &0 &1 &0 &0 &0 &1\\
0 &0 &0 &1 &0 &1 &0\\
1 &1 &0 &0 &0 &0 &0\\
0 &0 &0 &1 &1 &0 &1\\
0 &1 &1 &0 &0 &0 &0\\
0 &0 &0 &0 &1 &1 &0\\
0 &1 &0 &0 &1 &0 &0\\
0 &0 &1 &0 &0 &1 &0\\
1 &0 &0 &1 &0 &0 &1
\end{pmatrix}
\longrightarrow
\begin{pmatrix}
1 &0 &1 &0 &0 &0 &1 \\
1 &1 &0 &0 &0 &0 &0 \\
0 &1 &1 &0 &0 &0 &0 \\
0 &0 &0 &1 &0 &1 &0 \\
0 &0 &0 &1 &1 &0 &1 \\
0 &0 &0 &0 &1 &1 &0 \\
1 &0 &0 &1 &0 &0 &1 \\
0 &1 &0 &0 &1 &0 &0 \\
0 &0 &1 &0 &0 &1 &0 
\end{pmatrix}
\longrightarrow
\begin{pmatrix}
1 &0 &1 &0 &0 &0 &0 \\
0 &0 &0 &0 &0 &0 &-1 \\
0 &1 &1 &0 &0 &0 &0 \\
0 &0 &0 &1 &0 &1 &0 \\
1 &1 &0 &1 &1 &0 &0 \\
0 &0 &0 &0 &1 &1 &0 \\
1 &0 &0 &1 &0 &0 &0 \\
0 &1 &0 &0 &1 &0 &0 \\
0 &0 &1 &0 &0 &1 &0 
\end{pmatrix}
\end{align*}
Note that in the first matrix, 
the the first, second and third (resp. fourth, fifth and sixth) columns correspond to the odd cycle $C_1$ (resp. $C_2$), 
and the seventh column corresponds to the additional edge $\{v_{11},v_{22}\}$. 
By arranging the rows, we obtain the second matrix. 
The third one is the matrix just after the application of (1). 
\begin{align*}
\longrightarrow
\begin{pmatrix}
1 &0 &0 &0 &0 &0 &0 \\
0 &0 &0 &0 &0 &0 &-1 \\
0 &0 &1 &0 &0 &0 &0 \\
0 &0 &0 &1 &0 &1 &0 \\
0 &1 &0 &1 &1 &0 &0 \\
0 &-1&0 &0 &1 &1 &0 \\
0 &1 &0 &1 &0 &0 &0 \\
0 &0 &0 &0 &1 &0 &0 \\
0 &-1&0 &0 &0 &1 &0 
\end{pmatrix}
\longrightarrow
\begin{pmatrix}
1 &0 &0 &0 &0 &0 &0 \\
0 &1 &0 &0 &0 &0 &0 \\
0 &0 &1 &0 &0 &0 &0 \\
0 &0 &0 &1 &0 &1 &0 \\
0 &0 &0 &1 &1 &0 &1 \\
0 &0 &0 &0 &1 &1 &-1 \\
0 &0 &0 &1 &0 &0 &1 \\
0 &0 &0 &0 &1 &0 &0 \\
0 &0 &0 &0 &0 &1 &-1 
\end{pmatrix}
\longrightarrow
\begin{pmatrix}
1 &0 &0 &0 &0 &0 &0 \\
0 &1 &0 &0 &0 &0 &0 \\
0 &0 &1 &0 &0 &0 &0 \\
0 &0 &0 &0 &0 &0 &0 \\
0 &0 &0 &0 &0 &0 &0 \\
0 &0 &0 &0 &0 &0 &0 \\
0 &0 &0 &1 &0 &0 &1 \\
0 &0 &0 &0 &1 &0 &0 \\
0 &0 &0 &0 &0 &1 &-1 
\end{pmatrix}
\end{align*}
The fourth matrix is the result after the applications of (2-1), (2-2) and (2-3). 
Note that the first three rows contain the vectors consisting of only one $\pm 1$. 
By multiplying $(-1)$ to the last column and exchanging the second and the last columns, we obtain the fifth matrix. 
Finally, we obtain the last matrix by applying the operation (4). 
\end{ex}

\begin{lem}\label{lem:linear_depend}
{\em (a)} Let $e_1,\ldots,e_{2k}$ be the edges of an even cycle in $K_{r_1,\ldots,r_n}$. 
Then $$w_{\rho(e_1)},\ldots,w_{\rho(e_{2k})}$$ are linearly dependent. \\
{\em (b)} Let $C$ and $C'$ be two odd cycles and let $e_1,\ldots,e_{2k+1}$ (resp. $e_1',\ldots,e_{2k'+1}'$) be the edges of $C$ (resp. $C'$). 
\begin{itemize}
\item[(b-1)] Assume that $C$ and $C'$ have a unique common vertex. Then 
$$w_{\rho(e_1)},\ldots,w_{\rho(e_{2k+1})},w_{\rho(e_1')},\ldots,w_{\rho(e_{2k'+1}')}$$ are linearly dependent. 
\item[(b-2)] Assume that $C$ and $C'$ have no common vertex but there is a path whose edges are $f_1,\ldots,f_m$ between $C$ and $C'$ connecting them. 
Then $$w_{\rho(e_1)},\ldots,w_{\rho(e_{2k+1})},w_{\rho(e_1')},\ldots,w_{\rho(e_{2k'+1}')},w_{\rho(f_1)},\ldots,w_{\rho(f_m)}$$ 
are linearly dependent. 
\end{itemize}
\end{lem}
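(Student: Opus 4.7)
The plan is to deduce all three cases from one general principle: every even closed walk in $G$ yields a linear relation among the $\rho$-vectors of its edges, which, because $\alpha\mapsto w_\alpha$ is $\ZZ$-linear, lifts to the same linear relation among the $w_{\rho(e)}$. The task then reduces to exhibiting an appropriate even closed walk in each of the three cases.

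First I would prove the principle. If $W=e_1,\ldots,e_{2m}$ is an even closed walk, written as $e_i=\{u_{i-1},u_i\}$ with $u_{2m}=u_0$, then for every vertex $v$ the coefficient of $\eb_v$ in $\sum_{i=1}^{2m}(-1)^i\rho(e_i)$ vanishes: each internal visit $u_j=v$ with $0<j<2m$ contributes $(-1)^j+(-1)^{j+1}=0$, and the basepoint $u_0=u_{2m}=v$ contributes $(-1)^1+(-1)^{2m}=0$. Linearity of $\alpha\mapsto w_\alpha$ then gives $\sum_{i=1}^{2m}(-1)^i w_{\rho(e_i)}=0$.

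With this in hand, for part (a) the even cycle itself is an even closed walk using each of the $2k$ edges exactly once, so the principle directly produces a nontrivial dependence with coefficients $\pm 1$. For part (b-1) I would concatenate: traverse $C$ once starting at the shared vertex, then $C'$ once starting at the same vertex; this is a closed walk of even length $(2k+1)+(2k'+1)$ which uses every listed edge exactly once, again giving coefficients $\pm 1$. For part (b-2) the walk is $C$, then $f_1,\ldots,f_m$, then $C'$, then $f_m,\ldots,f_1$; its length $(2k+1)+m+(2k'+1)+m$ is even. Each cycle edge appears once with coefficient $\pm 1$, and each $f_i$ appears at positions $2k+1+i$ and $2k+2k'+3+2m-i$, whose difference $2(k'+m+1-i)$ is even, so both occurrences share the same sign and $f_i$ contributes $\pm 2$ rather than cancelling. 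Every listed vector therefore enters the relation with nonzero coefficient.

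The only step that requires care is the parity bookkeeping in (b-2): if the two occurrences of some $f_i$ had landed at positions of opposite parity, the path edges would cancel and the assertion about the $w_{\rho(f_i)}$ would become a trivial consequence of the cycle-edge relation. The odd lengths of $C$ and $C'$ are precisely what force both path traversals to exhibit matching sign patterns, so this degeneracy does not occur and the path edges genuinely participate in the dependence.
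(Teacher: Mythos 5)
Your proof is correct and follows essentially the same route as the paper: the alternating-sign relations you extract from the even closed walks (the cycle itself in (a), the concatenation at the common vertex in (b-1), and the cycle--path--cycle--path walk in (b-2)) are exactly the identities the paper writes down directly from its formula for $w_{\rho(e)}$, with your parity bookkeeping for the doubled path edges reproducing the paper's case split on the parity of $m$. Packaging the verification through the general closed-walk principle and the linearity of $\alpha\mapsto w_\alpha$ is a clean way to organize the same computation.
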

\begin{proof}
(a) By \eqref{eq:description}, we see that $\sum_{i=1}^{2k}(-1)^iw_{\rho(e_i)}={\bf 0}$. 

\noindent
(b) In the case (b-1), let $e_1 \cap e_{2k+1} \cap e_1' \cap e_{2k'+1}'$ be the unique common vertex of $C$ and $C'$. 
In the case (b-2), let $P$ be the path connecting the vertex $e_1 \cap e_{2k+1}$ of $C$ and $e_1' \cap e_{2k'+1}'$ of $C'$. 
Then we see the following: 
\begin{align*}
&\sum_{i=1}^{2k+1}(-1)^iw_{\rho(e_i)}-\sum_{i=1}^{2k'+1}(-1)^iw_{\rho(e_i')}={\bf 0}; \\
&\sum_{i=1}^{2k+1}(-1)^iw_{\rho(e_i)}-\sum_{i=1}^{2k'+1}(-1)^iw_{\rho(e_i')}-2\sum_{j=1}^m(-1)^jw_{\rho(f_j)}={\bf 0} \text{ if $m$ is even}; \\
&\sum_{i=1}^{2k+1}(-1)^iw_{\rho(e_i)}+\sum_{i=1}^{2k'+1}(-1)^iw_{\rho(e_i')}-2\sum_{j=1}^m(-1)^jw_{\rho(f_j)}={\bf 0} \text{ if $m$ is odd}. 
\end{align*}
\end{proof}

\bigskip

%%%%%%%%%%%%%%%%%%%%%%%%%%%%%%%%%%%%%%%%%%%%%%%%%%%%%%%%%%%%%%%%%%%%%%%%%%%%%%%%%%%%%%%%%%%%%%%%%%%%%%%%%%%%%%%%%%%%%%%%%%%%%
%%%%%%%%%%%%%%%%%%%%%%%%%%%%%%%%%%%%%%%%%%%%%%%%%%%%%%%%%%%%%%%%%%%%%%%%%%%%%%%%%%%%%%%%%%%%%%%%%%%%%%%%%%%%%%%%%%%%%%%%%%%%%
%%%%%%%%%%%%%%%%%%%%%%%%%%%%%%%%%%%%%%%%%%%%%%%%%%%%%%%%%%%%%%%%%%%%%%%%%%%%%%%%%%%%%%%%%%%%%%%%%%%%%%%%%%%%%%%%%%%%%%%%%%%%%
%%%%%%%%%%%%%%%%%%%%%%%%%%%%%%%%%%%%%%%%%%%%%%%%%%%%%%%%%%%%%%%%%%%%%%%%%%%%%%%%%%%%%%%%%%%%%%%%%%%%%%%%%%%%%%%%%%%%%%%%%%%%%
%%%%%%%%%%%%%%%%%%%%%%%%%%%%%%%%%%%%%%%%%%%%%%%%%%%%%%%%%%%%%%%%%%%%%%%%%%%%%%%%%%%%%%%%%%%%%%%%%%%%%%%%%%%%%%%%%%%%%%%%%%%%%
%%%%%%%%%%%%%%%%%%%%%%%%%%%%%%%%%%%%%%%%%%%%%%%%%%%%%%%%%%%%%%%%%%%%%%%%%%%%%%%%%%%%%%%%%%%%%%%%%%%%%%%%%%%%%%%%%%%%%%%%%%%%%

\section{Conic divisorial ideals of edge rings of complete multipartite graphs}\label{sec:conic}

In this section, we give a description of conic divisorial ideals of $\kk[K_{r_1,\ldots,r_n}]$ for $n =3,4$. 
\subsection{Preliminaries on conic divisorial ideals of toric rings} 
First, we review some basic facts on conic divisorial ideals of toric rings. 

Let $C \subset \RR^d$ be a finitely generated pointed cone defined by half-open spaces $H_i^{(+)} \subset \RR^d$ for $i=1,\ldots,m$, 
where $H_i^{(+)}=\{\xb \in \RR^d : \langle \tau_i,\xb \rangle \geq 0\}$ for some linear form $\tau_i \in (\RR^d)^*$. 
We set $\tau(-) : \RR^d \rightarrow \RR^m$ by $\tau(\xb):=(\langle \tau_1,\xb \rangle, \ldots, \langle \tau_m,\xb \rangle)$. 
We define a monoid $\kk$-algebra $R$ by setting 
$$R:=\kk[C \cap \ZZ^d]=\kk[\tb^\alpha : \alpha \in C \cap \ZZ^d] \subset \kk[t_1,\ldots,t_d],$$ 
where $\tb^\alpha=t_1^{\alpha_1}\cdots t_d^{\alpha_d}$ for each $\alpha=(\alpha_1,\ldots,\alpha_d)$. 

%We recall how to determine what kind of elements in $\Cl(R)$ corresponds to a conic divisorial ideals. 
%We review some basic facts on conic divisorial ideals of a toric ring $R$. 
Given $\ab=(a_1,\ldots,a_m) \in \RR^m$, we define 
the $R$-module $T(\ab)$ generated by the Laurent monomials whose exponents are in $\{\xb \in \ZZ^d : \tau(\xb) \geq \ab\}$, 
where $\geq$ stands for the component-wise inequality. It is known that there is an exact sequence 
\begin{equation}\label{cl_seq}
0 \longrightarrow\ZZ^d\xrightarrow{\tau(-)}\ZZ^m \longrightarrow\Cl(R) \longrightarrow 0, 
\end{equation}
where $\Cl(R)$ denotes the class group of $R$. 
Hence, we see that for $\ab, \ab^\prime\in\ZZ^m$, the divisorial ideals $T(\ab)$ and $T(\ab^\prime)$ are isomorphic as $R$-modules 
if and only if there exists $\yb \in \ZZ^d$ such that $\ab=\ab^\prime + \tau(\yb)$ (see, e.g., \cite[Corollary~4.56]{BG2}).

%
%
%In addition, the exact sequence (\ref{cl_seq}) gives the relations on these divisors. 
%Precisely, a divisor $a_1\calD_1+\cdots +a_n\calD_n$ is zero in $\Cl(R)$ if $a_i=\langle \xb,v_1\rangle$ for all $i=1,\cdots,n$ and some $\xb\in\sfM$. 
%In particular, if we take the $j$-th basic vector $\eb_j:=(\delta_{j1},\cdots,\delta_{jd})\in\sfM$, 
%we have that $$\langle\eb_j,v_1\rangle D_1+\cdots+\langle\eb_j,v_n\rangle D_n=0$$ in $\Cl(R)$ for all $j=1,\cdots,d$. 
%Thus, we have that 
%\begin{equation}
%\label{relation_divisor}
%v_{1,j}\calD_1+\cdots+v_{n,j}\calD_n=0 
%\end{equation} 
%for all $j=1,\cdots,d$ where $v_i:=(v_{i,1},\cdots,v_{i,d})\in\ZZ^d$. 
%

\begin{defi}[{See, e.g., \cite[Section 3]{BG2}}]
A divisorial ideal $T(\ab)$ is said to be \textit{conic} if there is $\xb \in \RR^d$ with $\ab=\lceil\tau(\xb)\rceil$, 
where $\lceil \cdot \rceil$ denotes the ceiling function and $\lceil (u_1,\ldots,u_m) \rceil =(\lceil u_1 \rceil, \ldots, \lceil u_m \rceil)$. 
In other words, there is $\xb \in \RR^d$ such that $\ab-{\bf 1} < \tau(\xb) \leq \ab$, where ${\bf 1}=(1,1,\ldots,1)$. 
\end{defi}
Note that a conic divisorial ideal is determined by the elements in $\RR^d/\ZZ^d$ up to isomorphism 
since we see that $T(\tau(\xb^\prime))\cong T(\tau(\xb))$ for $\xb,\xb^\prime \in \RR^d$ with $\xb^\prime=\xb+\yb$ and ${\bf y}\in \ZZ^d$.

\medskip

Let $\fkp_i= T(\eb_i)$, where $\eb_i \in \ZZ^m$ denotes the $i$-th unit vector, and let us consider the prime divisor $\calD_i := \Spec(R/\fkp_i)$ on $\Spec R$. 
Then we see that the divisorial ideal $T(\ab)$ with $\ab=(a_1,\ldots,a_m)$ corresponds to the Weil divisor $-(a_1\calD_1+\cdots +a_m\calD_m)$. 
Moreover, by using the exact sequence \eqref{cl_seq}, we see that 
\begin{equation}\label{relation_Rdiv}
\begin{split}
\ZZ^d &\cong \left\{(b_1,\ldots,b_m) \in\ZZ^m : \sum_{i=1}^mb_i\calD_i=0 \text{ in } \Cl(R) \right\}, \text{ and } \\
\RR^d &\cong \left\{(b_1,\ldots,b_m) \in\RR^m : \sum_{i=1}^mb_i\calD_i=0 \text{ in } \Cl(R) \otimes_\ZZ \RR \right\}. 
\end{split}
\end{equation}
Remark that $\sum_{i=1}^mb_i\calD_i=0$ holds if and only if $(b_1,\ldots,b_m)=\tau(\xb)$ for some $\xb\in \ZZ^d$. 
%Thus, we may consider $\ub \in \RR^d$ as $(u_i)_i\in\RR^n$ such that $\sum_iu_i\calD_i=0$ in $\Cl(R)_\RR$ with $(u_i)_i=\tau(\ub)$. Hence, 
%\begin{equation*}
%\TT(\sigma(\ub))=\{(b_i)_i\in\ZZ^n \mid \sum_ib_i\calD_i=0 \ \text{in}\ \Cl(R),\ b_i\ge u_i \}. 
%\end{equation*}
%Here, we write $u_i=a_i+\delta_i$ with $a_i\in\ZZ$ and $\delta_i\in(-1,0]$, and set $c_i:= b_i-a_i\ge\delta_i$, $\alpha=-\sum_i a_i\calD_i\in\Cl(R)$. 
%Then, we see that 
%\begin{equation*}
%\TT(\sigma(\ub))=(a_i)_i+\{(c_i)_i\in\ZZ^n_{\ge0} \mid \sum_ic_i\calD_i=\alpha \}, 
%\end{equation*}
%and hence $T(\sigma(\ub))$ is isomorphic to the divisorial ideal corresponding to $\alpha=-\sum_i a_i\calD_i$, which is $T(a_1,\cdots,a_n)$. 
%Here, we note that $\ulcorner\sigma_i(\ub)\urcorner=a_i$.  
%Furthermore, we remark that $0=\sum_iu_i\calD_i=-\alpha+\sum_i\delta_i\calD_i$ in $\Cl(R)_\RR$, thus $\alpha=\sum_i\delta_i\calD_i$ in $\Cl(R)_\RR$. 
%We easily follow the converse of this argument. 

By using those descriptions, we can characterize what kinds of elements in $\Cl(R)$ correspond to conic divisorial ideals as follows. (See \cite[Subsection 2.1]{HN}.) 
\begin{lem}[{See \cite[Corollary 1.2]{Bru} and \cite[Proposition~3.2.3]{SmVdB}}]\label{lem:conic}
There exists a one-to-one correspondence among the following objects: 
\begin{enumerate}[\rm (1)]
\setlength{\parskip}{0pt} 
\setlength{\itemsep}{3pt}
\item a conic divisorial ideal $T(a_1,\ldots,a_m)$; 
\item an $\RR$-divisor $\sum_{i=1}^m\delta_i\calD_i$ with $(\delta_1,\ldots,\delta_m) \in(-1,0]^m$ up to equivalence, 
where we say that two $\RR$-divisors are equivalent if their difference is in \eqref{relation_Rdiv}; 
\item a full-dimensional cell of the decomposition of the semi-open cube $(-1,0]^d$ by 
hyperplanes $H_{i,q}=\{\xb \in \RR^d : \tau_i(\xb)=q \}$ for some $q \in\ZZ$ and $i=1,\ldots,m$.  
\end{enumerate}
\end{lem}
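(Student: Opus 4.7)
The plan is to establish the chain of bijections $(1)\Leftrightarrow(2)\Leftrightarrow(3)$ by unraveling the definitions and exploiting the exact sequence \eqref{cl_seq} together with its $\RR$-linear description \eqref{relation_Rdiv}. The unifying object is the parameter $\xb\in\RR^d$ entering the definition of a conic divisorial ideal: any such ideal has the form $T(\lceil \tau(\xb)\rceil)$, and two conic divisorial ideals are isomorphic exactly when the corresponding $\xb$'s agree modulo $\ZZ^d$, since $\lceil \tau(\xb+\yb)\rceil-\lceil\tau(\xb)\rceil = \tau(\yb)\in\tau(\ZZ^d)$ for every $\yb\in\ZZ^d$.

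For $(1)\Leftrightarrow(2)$, starting from a conic $T(\ab)$, I would fix $\xb$ with $\ab=\lceil \tau(\xb)\rceil$ and set $\delta_i:=\tau_i(\xb)-a_i\in(-1,0]$. The Weil divisor $-\sum a_i\calD_i$ representing $T(\ab)$ differs from $\sum \delta_i\calD_i$ by the divisor $\sum \tau_i(\xb)\calD_i$, which lies in the subspace \eqref{relation_Rdiv} by definition; hence the two $\RR$-divisors are equivalent. Different valid choices of $\xb$ producing the same $\ab$ yield the same equivalence class, so the assignment is well-defined, and its inverse selects in each equivalence class an integral representative of the form $-\lceil \tau(\xb)\rceil$.

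For $(2)\Leftrightarrow(3)$, I would first verify that the hyperplane arrangement $\{H_{i,q}\}$ is $\ZZ^d$-invariant: translation by $\yb\in\ZZ^d$ sends $H_{i,q}$ to $H_{i,q-\tau_i(\yb)}$, which still belongs to the arrangement because $\tau_i(\yb)\in\ZZ$. The arrangement therefore descends to $\RR^d/\ZZ^d$, for which $(-1,0]^d$ is a fundamental domain, and its full-dimensional cells are exactly those enumerated in (3). On any single such cell the vector $\lceil \tau(\xb)\rceil$ is locally constant, so the class of $\sum \delta_i\calD_i$ constructed above is constant along the cell. For injectivity, if $\xb,\xb'\in(-1,0]^d$ give equivalent $\RR$-divisors, then $\tau(\xb)-\tau(\xb')=\tau(\zb)$ for some $\zb\in\ZZ^d$; since $\tau$ is injective (as $C$ is pointed), $\xb-\xb'=\zb\in\ZZ^d$, and the fundamental-domain property forces $\xb=\xb'$, so the two cells coincide.

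The main obstacle I anticipate is the bookkeeping at the boundary of the semi-open cube $(-1,0]^d$: one must verify that the choice of $\delta_i\in(-1,0]$ is compatible with the ceiling convention $\ab=\lceil \tau(\xb)\rceil$ (rather than the alternative $\lfloor\cdot\rfloor$ with $[0,1)^d$), and that each $\ZZ^d$-orbit on $\RR^d$ meets $(-1,0]^d$ in exactly one point. Once this is handled cleanly, the three sets of objects are in bijection via the maps above.
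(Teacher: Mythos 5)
The paper does not actually prove this lemma --- it is quoted from Bruns and from Smith--Van den Bergh --- so there is no in-paper argument to compare against. Your sketch follows the standard route of those references: parametrize conic classes by $\lceil\tau(\xb)\rceil$, reduce $\xb$ modulo $\ZZ^d$ into the fundamental domain $(-1,0]^d$, and use that $\lceil\tau(\cdot)\rceil$ is constant on cells. The well-definedness halves of your maps are fine.

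The injectivity half of $(2)\Leftrightarrow(3)$, however, is a non sequitur and, read literally, proves something false: if $\xb\neq\xb'$ are two points of the \emph{same} cell, their $\RR$-divisors are equivalent (their difference is $\tau(\xb-\xb')\in\tau(\RR^d)$), yet your argument would conclude $\xb=\xb'$. The faulty step is ``equivalent $\RR$-divisors $\Rightarrow\ \tau(\xb)-\tau(\xb')=\tau(\zb)$ with $\zb\in\ZZ^d$''; equivalence only gives $\tau(\xb)-\tau(\xb')-(\ab-\ab')\in\tau(\RR^d)$ with $\ab=\lceil\tau(\xb)\rceil$, $\ab'=\lceil\tau(\xb')\rceil$, hence $\ab-\ab'\in\tau(\RR^d)\cap\ZZ^m$. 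Two further facts are needed to close the argument. (i) $\tau(\RR^d)\cap\ZZ^m=\tau(\ZZ^d)$, i.e.\ $\tau(\ZZ^d)$ is saturated in $\ZZ^m$, equivalently $\Cl(R)$ is torsion-free; this holds in the paper's setting by Theorem~\ref{main1} but must be invoked, since with the $\Cl(R)\otimes_\ZZ\RR$ equivalence of (2) the correspondence genuinely fails when $\Cl(R)$ has torsion. (ii) Writing $\ab-\ab'=\tau(\zb)$ with $\zb\in\ZZ^d$, the point $\xb'+\zb$ lies in the same cell of the full arrangement in $\RR^d$ as $\xb$, and one must show $\zb=0$ so that the two cells of $(-1,0]^d$ coincide; this is where the fundamental-domain property really enters, and it uses that the coordinate hyperplanes $x_j=q$ are among the $H_{i,q}$ (here $\tau_i=\eb_i$ for $i\le d$), so that each cell of the full arrangement sits inside a single integer translate of $(-1,0]^d$. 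The same repair is needed for injectivity of $(1)\to(2)$, and your opening claim that isomorphic conic ideals have parameters agreeing modulo $\ZZ^d$ has the same defect: they need only lie in the same cell modulo $\ZZ^d$.
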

We identify the cell $\bigcap_{i=1}^m L_{i,a_i}$ with $T(a_1,\ldots,a_m)$, where $$L_{i,a_i}=\{\xb \in \RR^d : a_i-1<\tau_i(\xb) \leq a_i\}.$$ 

%As we mentioned in Theorem~\ref{motivation_thm}, conic divisorial ideals are precisely modules appearing in $R^{1/m}$ 
%as direct summands for $m\gg 0$. 
%Since $R^{1/m}$ is an MCM $R$-module, a conic divisorial ideal is also an MCM $R$-module. 
%We notice that the number of non-isomorphic conic divisorial ideals is finite because that of rank one MCM $R$-modules is finite \cite[Corollary~5.2]{BG1}. 
%Also, there exists a divisorial ideal that is a rank one MCM module but not conic (see e.g., \cite{Bae,Bru}).  
%If $\tau$ is simplicial, every divisorial ideal is conic, because a torsion element in $\operatorname{Cl}(R)$ is conic \cite[Theorem~3.2]{BG1}. 

\medskip

\subsection{Hyperplanes of the cone associated to $P_{K_{r_1,\ldots,r_n}}$}
In what follows, we consider $G=K_{r_1,\ldots,r_n}$ with $1 \leq r_1 \leq \cdots \leq r_n$, and assume that $n=3$ with $r_1 \geq 2$ or $n \geq 4$. 
Note that $G$ is non-bipartite. Let $V(G)=\bigsqcup_{i=1}^nV_i$ with $|V_i|=r_i$, let $V_i=\{v_{i1},\ldots,v_{ir_i}\}$ 
and let $E(G)=\{\{a,b\} : a \in V_i, b \in V_j \text{ for }1 \leq i \neq j \leq n\}$. 

In the sequel, we identify the entry of $\RR^d$ with the vertex of $G$ and assume that $v_{n,r_n}$ corresponds to the last ($d$-th) coordinate of $\RR^d$. 
\iffalse
we index the entries of $\RR^d$ in the following order: 
\begin{align*}
&\underbrace{v_{11},v_{12},\ldots,v_{1r_1}}_{V_1},\underbrace{v_{21},v_{22},\ldots,v_{2r_2}}_{V_2},\ldots,\underbrace{v_{n1},v_{n2},\ldots,v_{nr_n}}_{V_n} \\
&\longrightarrow \underbrace{x_1,\ldots,x_{r_1}}_{V_1},\underbrace{x_{r_1+1},\ldots,x_{r_1+r_2}}_{V_2},\ldots,\underbrace{x_{\sum_{i=1}^{n-1}r_i+1},\ldots,x_d}_{V_n}. 
\end{align*}
\fi

Let $\pi : \RR^d \rightarrow \RR^{d-1}$ with $\pi(x_1,\ldots,x_d)=(x_1,\ldots,x_{d-1})$. %i.e., let $\pi$ be the forgetting map of the last coordinate. 
For the proof of Theorem~\ref{main2}, we replace $P_G$ by the projected polytope $\pi(P_G)$. 
Let $$C_G = \ZZ_{\geq 0}\{(\alpha,1) \in \ZZ^d : \alpha \in \pi(P_G) \cap \ZZ^{d-1}\}$$ and consider the monoid $C_G \cap \ZZ^d$. 

First, we observe how the supporting hyperplanes of $C_G$ look like. 
We see that the variable ``$x_d$'' in $P_G$ changes into ``$2x_d-\sum_{i=1}^{d-1}x_i$'' since $\sum_{i=1}^dx_i=2$ holds. 
Hence, by \eqref{eq:hyp_P_G}, the system of supporting hyperplanes of $C_G$ becomes as follows:
\begin{equation}\label{eq:hyp_C_G}
\begin{split}
&x_i \geq 0 \;\; (i=1,\ldots,d-1), \quad 2x_d - \sum_{i=1}^{d-1}x_i \geq 0, \\
&x_d - \sum_{j \in V_i}x_j \geq 0 \;\; (i=1,\ldots,n-1), \quad \sum_{k \in V(G) \setminus V_n}x_k - x_d \geq 0. 
\end{split}
\end{equation}
Apply the following unimodular transformation: 
\begin{align*}
x_i \mapsto y_i \;\text{ for }i=1,\ldots,d-1, \;\text{ and }\;\; \sum_{k \in V(G) \setminus V_n}x_k - x_d \mapsto y_d. 
\end{align*}
Then \eqref{eq:hyp_C_G} changes as follows: 
\begin{equation}\label{eq:C_G}
\begin{split}
&y_i \geq 0 \;\; (i=1,\ldots,d), \\ 
&-y_d + \sum_{k \in V \setminus (V_i \sqcup V_n)}y_k \geq 0 \;\; (i=1,\ldots,n-1), \\
&-2y_d + \sum_{k \in V \setminus V_n}y_k - \sum_{u \in V_n \setminus \{v_{n,r_n}\}}y_u \geq 0. 
\end{split}
\end{equation}
Let $$C_G'=\{{\xb} \in \RR^d : \xb \text{ satisfies all inequalities in \eqref{eq:C_G}}\}.$$ 
Since the edge ring $\kk[G]$ is unimodularly equivalent to the monoid $\kk$-algebra $\kk[C_G' \cap \ZZ^d]$, we consider $C_G'$.

In what follows, let 
\begin{align}\label{eq:tau}
(\RR^d) \ni \tau_i=\begin{cases}
\eb_i &\text{ for }i=1,\ldots,d, \\
\sum_{k \in V \setminus V_n}\eb_k-\sum_{\ell \in V_{i-d}}\eb_\ell - \eb_d &\text{ for }i=d+1,\ldots,d+n. 
%\sum_{k \in V \setminus V_n}\eb_k - \sum_{\ell \in V_n}\eb_\ell - \eb_d &\text{ for }i=d+n. 
\end{cases}
\end{align}
Then each inequality in \eqref{eq:C_G} corresponds to $\langle \tau_i,\yb \rangle \geq 0$, where $\yb=(y_1,\ldots,y_d)$.

\bigskip

\subsection{Proof of Theorem~\ref{main2}}

Before proving Theorem~\ref{main2}, we describe $\calC(r_1,\ldots,r_n)$ more explicitly for small $n$'s. 
\begin{ex}\label{ex:conic}
Let $n=3$. Then 
\begin{equation}\label{ineq:n=3}
\begin{split}
\calC(r_1,r_2,r_3)=\{(z_1,z_2,z_3) \in \RR^3 : &-r_2 \leq z_1-z_2 \leq r_1, \; -r_3 \leq z_1-z_3 \leq r_1, \\ 
&-r_3 \leq z_2-z_3 \leq r_2, \; -r_2+1 \leq z_1 \leq 1,  \\
&-r_1+1 \leq z_2 \leq 1, \; -r_3+1 \leq z_1+z_2-z_3 \leq 1\}. 
\end{split}
\end{equation}
Note that the inequality $-r_2 + 1 \leq z_1 \leq 1$ (resp. $-r_1 + 1 \leq z_2 \leq 1$) 
comes from the second family in \eqref{conic_region} with $I=\{1\}$ (resp. $I=\{2\}$) and $J=\emptyset$ 
and the inequality $-r_3+1 \leq z_1+z_2-z_3 \leq 1$ comes from the third family in \eqref{conic_region} with $I=\{1,2\}$ and $J=\{3\}$. 

Let $n=4$. Then 
\begin{equation}\label{ineq:n=4}
\begin{split}
\calC(r_1,r_2,r_3,r_4)=\{&(z_1,z_2,z_3,z_4) \in \RR^4 : -r_j \leq z_i-z_j \leq r_i \text{ for }1 \leq i < j \leq 4, \\ 
&-\sum_{j \in \{1,2,3\} \setminus \{i\}}r_j+1 \leq z_i \leq 1 \text{ for }i=1,2,3, \\
&-2r_k \leq z_i+z_j-z_k \leq 2 \text{ for }\{i,j,k\}=\{1,2,3\}, \\
&-r_k-r_4+1 \leq z_i+z_j-z_4 \leq 1 \text{ for }\{i,j,k\}=\{1,2,3\}, \\
&-2r_4 \leq z_1+z_2+z_3-2z_4 \leq 2 \}. 
\end{split}
\end{equation}
Note that the third family of the inequalities $-2r_k \leq z_i+z_j-z_k \leq 2$ (as well as the fourth one) for $\{i,j,k\}=\{1,2,3\}$ 
is regarded as three inequalities. 
\end{ex}

Theorem~\ref{main2} directly follows from Lemma~\ref{key}, Proposition~\ref{prop:ab} and Lemma~\ref{n=34} below. 

\begin{lem}\label{key}
Let $\cb=(c_1,\ldots,c_n) \in \ZZ^n$. Assume that the following conditions (a) and (b) are equivalent: 
\begin{itemize}
\item[(a)] there exists $\xb \in (-1,0]^d$ such that $c_j-1 < \langle \tau_{d+j},\xb \rangle \leq c_j$ holds for $j=1,\ldots,n$; 
\item[(b)] $\cb \in \calC(r_1,\ldots,r_n) \cap \ZZ^n$. 
\end{itemize}
Then the conic divisorial ideals one-to-one correspond to the points in $\calC(r_1,\ldots,r_n) \cap \ZZ^n$. 
\end{lem}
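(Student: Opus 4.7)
The plan is to specialize Lemma~\ref{lem:conic} to the cone $C_G'$, whose defining functionals $\tau_1,\ldots,\tau_{d+n}$ are recorded in \eqref{eq:tau}, and then to invoke the hypothesis (a)$\iff$(b) to match the resulting parameter set with $\calC(r_1,\ldots,r_n) \cap \ZZ^n$.

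First I would apply Lemma~\ref{lem:conic}(3) to $\kk[C_G' \cap \ZZ^d] \cong \kk[G]$: conic divisorial ideals are in bijection with the full-dimensional cells $\bigcap_{i=1}^{d+n} L_{i,a_i}$ of the decomposition of $(-1,0]^d$ by the hyperplanes $H_{i,q}$, where $L_{i,a_i}=\{\xb : a_i-1 < \langle \tau_i,\xb\rangle \leq a_i\}$ and $(a_1,\ldots,a_{d+n}) \in \ZZ^{d+n}$.

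The key observation is that the first $d$ indices are automatically zero. Indeed, $\tau_i=\eb_i$ for $i \leq d$ by \eqref{eq:tau}, so for any $\xb \in (-1,0]^d$ one has $\langle \tau_i,\xb\rangle = x_i \in (-1,0]$, forcing $a_i=\lceil x_i\rceil = 0$. Consequently a cell in $(-1,0]^d$ is uniquely indexed by $\cb=(a_{d+1},\ldots,a_{d+n}) \in \ZZ^n$, and the nonemptiness of the cell indexed by $\cb$ is literally the existence statement in condition (a); a short perturbation argument inside the half-open cube $(-1,0]^d$ will confirm that any such nonempty cell has nonempty interior and is therefore full-dimensional, so the two notions coincide in our setting.

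Finally, invoking the hypothesis (a)$\iff$(b) identifies the set of admissible $\cb$ with $\calC(r_1,\ldots,r_n) \cap \ZZ^n$, yielding the desired bijection. The proof of Lemma~\ref{key} itself is essentially bookkeeping of the form above; the genuine difficulty of the whole picture is the equivalence (a)$\iff$(b), whose verification is deferred to Proposition~\ref{prop:ab} (and specialized to $n=3,4$ in Lemma~\ref{n=34}) and constitutes the technical heart of Theorem~\ref{main2}.
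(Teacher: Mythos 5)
Your proposal is correct and follows essentially the same route as the paper: both specialize Lemma~\ref{lem:conic} to $C_G'$, observe that $\tau_i=\eb_i$ for $i\leq d$ forces $a_1=\cdots=a_d=0$ so that a cell is indexed by $\cb=(a_{d+1},\ldots,a_{d+n})$, and then invoke the hypothesis (a)$\iff$(b); the paper merely phrases the converse direction through the $\RR$-divisor characterization (item (2) of Lemma~\ref{lem:conic}) rather than your cell-nonemptiness-plus-perturbation. Your perturbation step is legitimate, but note it genuinely uses that the $\tau_i$ are the facet normals of the full-dimensional pointed cone $C_G'$ (so there is a direction along which every $\langle\tau_i,\cdot\rangle$ strictly decreases); for an arbitrary family of half-open slabs a nonempty intersection need not be full-dimensional.
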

\begin{proof}
Let $m=d+n$. 

\noindent
\underline{Conic $\Rightarrow$ $\calC(r_1,\ldots,r_n)$}: 
Take any $\ab=(a_1,\ldots,a_m) \in \Cl(R) \subset \ZZ^m$ (cf. \eqref{relation_Rdiv}) corresponding to a conic divisorial ideal $T(\ab)$. 
%We prove that $\ab \in \calC(r_1,\ldots,r_n) \cap \ZZ^n$ (cf. \eqref{conic_region}). 

We consider the decomposition of the semi-open cube $(-1,0]^d$ cut by the hyperplanes defined from $\tau_i \; (i=1,\ldots,m)$ in \eqref{eq:tau}. 
More precisely, by identifying a conic divisorial ideal $T(a_1,\ldots,a_m)$ with a full-dimensional cell of the decomposition $\bigcap_{i=1}^mL_{i,a_i}$, 
where $L_{i,a_i}=\{\xb \in \RR^d : a_i-1 < \langle \tau_i,\xb \rangle \leq a_i\}$ for $i=1,\ldots,m$, 
we analyze which $(a_1,\ldots,a_m) \in \ZZ^m$ defines a conic divisorial ideal. 

Here, we notice that $\bigcap_{i=1}^d L_{i,a_i} \subset (-1,0]^d$ holds if and only if $a_1=\cdots=a_d=0$, and in this case, we have $\bigcap_{i=1}^d L_{i,a_i} = (-1,0]^d$. 
Hence, we see that $a_1=\cdots=a_d=0$ and we may discuss the remaining linear forms $\tau_{d+1},\ldots,\tau_{d+n}$. 

In what follows, we show that $(a_{d+1},\ldots,a_{d+n}) \in \calC(r_1,\ldots,r_n) \cap \ZZ^n$. By definition of $\bigcap_{i=1}^mL_{i,a_i}$ and since it becomes full-dimensional, we see that 
\begin{align*}
%a_{d+i}&=\left\lceil -y_d + \sum_{k \in V \setminus (V_i \sqcup V_n)}y_k \right\rceil=\lceil \langle \tau_{d+i},\yb \rangle \rceil \text{ for }i=1,\ldots,n-1, \text{ and } \\
%a_{d+n}&=\left\lceil -2y_d + \sum_{k \in V \setminus V_n}x_k - \sum_{u \in V_n \setminus \{v_{n,r_n}\}}y_u \right\rceil=\lceil \langle \tau_{d+n},\yb \rangle \rceil, 
a_{d+i}-1<\langle \tau_{d+i},\yb \rangle \leq a_{d+i} \text{ for }i=1,\ldots,n, 
\end{align*}
where $\yb=(y_1,\ldots,y_d)$ and $-1 < y_i \leq 0$ for each $i=1,\ldots,d$. 
Therefore, we conclude that $(a_{d+1},\ldots,a_{d+n}) \in \calC(r_1,\ldots,r_n) \cap \ZZ^n$ since (a) implies (b). 

\medskip

\noindent
\underline{$\calC(r_1,\ldots,r_n)$ $\Rightarrow$ conic}: 
Take any $\ab \in \calC(r_1,\ldots,r_n) \cap \ZZ^n$. We show that a divisorial ideal $T(0,\ldots,0,a_1,\ldots,a_n)$ is conic. 
For this purpose, we prove that an $\RR$-divisor $\sum_{i=1}^mb_i\calD_i$ which is equivalent to $-\sum_{i=1}^na_i\calD_{d+i}$ 
satisfies that $-{\bf 1} < \bb \leq {\bf 0}$ (see Lemma~\ref{lem:conic}). 
By definition, we have $b_i+a_{i-d}=\langle \tau_i,\xb\rangle$ for each $i=1,\ldots,m$ for some $\xb \in \RR^d$, where we let $a_j=0$ if $j \leq 0$. 
Here, we have $b_i=\langle \tau_i,\xb\rangle=x_i$ for $i=1,\ldots,d$. 
Since we can choose $\xb \in \RR^d$ up to $\RR^d/\ZZ^d$, we may assume that $-1 < x_i \leq 0$ for $i=1,\ldots,d$. 
Hence, $-1 < b_i \leq 0$ holds for $i=1,\ldots,d$. 

By $\ab \in \calC(r_1,\ldots,r_n) \cap \ZZ^n$, since (b) implies (a), 
we see that $a_j -1 < \langle \tau_{d+j},\xb \rangle \leq a_j$ holds for each $j=1,\ldots,n$. Hence, 
\begin{align*}
-1< b_j=\langle \tau_{d+j},\xb\rangle - a_j \leq 0 \text{ for }j=1,\ldots,n, 
\end{align*}
as desired. 
\end{proof}
\begin{prop}\label{prop:ab}
The implication (a) $\Rightarrow$ (b) in Lemma~\ref{key} holds for any $n$. 
\end{prop}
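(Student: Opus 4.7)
The plan is to rewrite condition (a) in convenient aggregated coordinates and then verify each of the three families of inequalities defining $\calC(r_1,\ldots,r_n)$ by a uniform integrality-squeeze argument. Set $y_j := \langle \tau_{d+j}, \xb \rangle$ and $\epsilon_j := c_j - y_j$, so that the hypothesis $c_j - 1 < y_j \leq c_j$ becomes $\epsilon_j \in [0, 1)$ for each $j = 1, \ldots, n$. Introduce $s_i := \sum_{v \in V_i} x_v$ for $i = 1, \ldots, n$ and $t := -x_{v_{n,r_n}}$; the constraint $x_v \in (-1, 0]$ yields $s_i \in (-r_i, 0]$ and $t \in [0, 1)$. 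It will also be convenient to set $s_n' := s_n + t = \sum_{v \in V_n \setminus \{v_{n,r_n}\}} x_v \in (-(r_n - 1), 0]$. Plugging \eqref{eq:tau} into the definition of $y_j$ then gives the closed forms
\begin{align*}
y_j &= t + \sum_{k \in [n-1] \setminus \{j\}} s_k \quad (j \in [n-1]), \\
y_n &= t + \sum_{k = 1}^{n-1} s_k - s_n.
\end{align*}

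For each inequality in \eqref{conic_region}, the strategy is uniform: take the prescribed combination $\sum_{i \in I} y_i - \sum_{j \in J} y_j$, evaluate it explicitly in the $s_i$'s, $t$, and $s_n'$, bound it using the intervals above, and then pass to $\cb$ via $c_j = y_j + \epsilon_j$ using $\sum_{i \in I} \epsilon_i - \sum_{j \in J} \epsilon_j \in (-|J|, |I|)$ together with the integrality of $\sum_{i \in I} c_i - \sum_{j \in J} c_j$. The integrality step converts each strict inequality between an integer quantity and an integer bound into the non-strict form with a shift by $1$, which is exactly the source of the ``$+1$'' and ``$+2$'' constants in \eqref{conic_region}. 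The first family $-r_i \leq z_j - z_i \leq r_j$ follows from $y_j - y_i = s_i - s_j$ (or $s_i - s_n$ if $j = n$) together with the box ranges and the integrality of $c_j - c_i$. For the second family (with $I, J \subset [n-1]$, $|I| = |J| + 1$, $I \cap J = \emptyset$), setting $K := [n-1] \setminus (I \cup J)$, a direct computation gives
\begin{equation*}
\sum_{i \in I} y_i - \sum_{j \in J} y_j = t + 2 \sum_{j \in J} s_j + \sum_{k \in K} s_k,
\end{equation*}
which lies in $\bigl(-2\sum_{j \in J} r_j - \sum_{k \in K} r_k,\; 1\bigr)$. For the third family (with $n \in J$), setting $J' := J \setminus \{n\}$ and $K := [n-1] \setminus (I \cup J')$, the analogous computation together with the substitution $s_n = s_n' - t$ yields
\begin{equation*}
\sum_{i \in I} y_i - \sum_{j \in J} y_j = 2 \sum_{j \in J'} s_j + \sum_{k \in K} s_k + s_n',
\end{equation*}
which lies in $\bigl(-2\sum_{j \in J'} r_j - \sum_{k \in K} r_k - (r_n - 1),\; 0\bigr]$. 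Applying the integrality squeeze in each case produces exactly the corresponding inequality of \eqref{conic_region}.

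The main obstacle is the combinatorial bookkeeping involved in computing $\sum_{i \in I} \tau_{d+i} - \sum_{j \in J} \tau_{d+j}$ in closed form. In particular, one has to recognize that the substitution $s_n = s_n' - t$ is what cancels the spurious $t$-term in the third family, so that the resulting expression has nonpositive coefficients on the individual $x_v$'s and the box constraints $x_v \in (-1, 0]$ transfer cleanly into the stated interval. Once this reorganization is carried out, the integrality argument is uniform and works for any $n$.
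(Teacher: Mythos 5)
Your overall strategy is the same as the paper's: compute the aggregated linear form $\sum_{i\in I}\tau_{d+i}-\sum_{j\in J}\tau_{d+j}$ in closed form, bound it using $\xb\in(-1,0]^d$, and then use the integrality of $\sum_{i\in I}c_i-\sum_{j\in J}c_j$ together with $\epsilon_j\in[0,1)$ to sharpen the resulting strict bounds into the non-strict inequalities of \eqref{conic_region}. Your treatment of the first family, of the third family, and of the second family whenever $J$ is an honest set all check out exactly.

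There is, however, one concrete defect: \eqref{conic_region} explicitly allows $J$ to be a \emph{multiset}, and your closed form miscounts multiplicities. Writing $m_j$ for the multiplicity of $j$ in $J$, the correct expression in the second family is
\begin{equation*}
\sum_{i\in I}y_i-\sum_{j\in J}y_j
= t+\sum_{k\in[n-1]\setminus I}s_k+\sum_{j\in J}s_j
= t+\sum_{k\in K}s_k+\sum_{j\in\operatorname{supp}(J)}(m_j+1)\,s_j,
\end{equation*}
the coefficient $m_j+1$ arising as one copy of $s_j$ from $\sum_{k\in[n-1]\setminus I}s_k$ plus $m_j$ copies from $\sum_{j\in J}s_j$. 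Your coefficient $2m_j$ (i.e.\ the term $2\sum_{j\in J}s_j$) agrees with this only when every $m_j=1$. When some $m_j\geq 2$, your interval is strictly wider than the true one, and the integrality squeeze then only yields a lower bound $-|J|-\sum_{k\in K}r_k-\sum_j 2m_jr_j+1$, which is strictly weaker than the bound $-|J|-\sum_{i\in[n-1]\setminus I}r_i-\sum_{j\in J}r_j+1=-|J|-\sum_{k\in K}r_k-\sum_j(m_j+1)r_j+1$ demanded by \eqref{conic_region}; those defining inequalities of $\calC(r_1,\ldots,r_n)$ are therefore not established. Repeated elements in $J$ first occur for $n\geq 5$ (for $n\leq 4$ the constraints $|I|=|J|+1$, $I\cap J=\emptyset$, $I,J\subset[n-1]$ force all multiplicities to equal $1$), so your argument is complete in the cases the paper actually uses downstream, but the proposition is asserted for all $n$. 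The fix is immediate: keep the two sums $\sum_{k\in[n-1]\setminus I}s_k$ and $\sum_{j\in J}s_j$ separate, as the paper's own estimate \eqref{ineq:2} does, after which the same squeeze gives exactly the stated constants; the analogous care is needed in the third family when $J\setminus\{n\}$ has repetitions.
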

\begin{proof}
%Assume that (a) holds. 
%
%For $i=1,\ldots,n-1$, since $0 \leq -x_j <1$ and $-1<x_j \leq 0$ for each $j \in V \setminus (V_i \sqcup V_n)$, we see from \eqref{eq:tau} that 
%$\displaystyle -\sum_{\substack{1 \leq j \leq n-1 \\ j \neq i}}r_j < \langle \tau_{d+i},\xb \rangle < 1$. 
%By this inequality, we obtain that $$-\sum_{\substack{1 \leq j \leq n-1 \\ j \neq i}}r_j + 1 \leq c_i \leq 1$$ holds for $i=1,\ldots,n-1$. 
%Similarly, since we have $-\sum_{k=1}^{n-1}r_k < \langle \tau_{d+n},\xb \rangle < r_n+1$, 
%we obtain that $$-\sum_{k=1}^{n-1}r_k+1 \leq c_n \leq r_n+1.$$ 
%
%Moreover, by our assumption, we have $\langle \tau_{d+i},\xb \rangle \leq c_i<\langle \tau_{d+i},\xb \rangle+1$ for each $i=1,\ldots,n$. Thus, we see that 
%\begin{align*}
%\langle \tau_{d+j}-\tau_{d+i},\xb \rangle - 1 < c_j-c_i < \langle \tau_{d+j}-\tau_{d+i},\xb \rangle + 1
%\end{align*}
%for each $1 \leq i, j \leq n$. Here, we observe that $\tau_{d+j}-\tau_{d+i}=\sum_{k \in V_i}\eb_k-\sum_{\ell \in V_j}\eb_\ell$ for each $1 \leq i, j \leq n$. Hence, we obtain that 
%\begin{align*}
%-r_i-1&< \sum_{k \in V_i} x_k - \sum_{\ell \in V_j} x_\ell -1 = \langle \tau_{d+j}-\tau_{d+i},\yb \rangle - 1 \\
%&< c_j-c_i < \langle \tau_{d+j}-\tau_{d+i},\xb \rangle + 1 = \sum_{k \in V_i} x_k - \sum_{\ell \in V_j} x_\ell +1 < r_j+1
%\end{align*}
%for each $1 \leq i, j \leq n$, as required. 
%
On the first family of the inequalities in \eqref{conic_region}, 
since we have $\langle \tau_{d+i},\xb \rangle \leq c_i<\langle \tau_{d+i},\xb \rangle+1$ for each $i=1,\ldots,n$ by our assumption, we see that 
\begin{align*}
\langle \tau_{d+j}-\tau_{d+i},\xb \rangle - 1 < c_j-c_i < \langle \tau_{d+j}-\tau_{d+i},\xb \rangle + 1
\end{align*}
for each $1 \leq i, j \leq n$. Here, we observe that 
\begin{align}\label{ineq:observe}\tau_{d+j}-\tau_{d+i}=\sum_{k \in V_i}\eb_k-\sum_{\ell \in V_j}\eb_\ell\text{ for each }1 \leq i, j \leq n.\end{align} 
Hence, we obtain that 
\begin{align*}
-r_i-1&< \sum_{k \in V_i} x_k - \sum_{\ell \in V_j} x_\ell -1 = \langle \tau_{d+j}-\tau_{d+i},\xb \rangle - 1 \\
&< c_j-c_i < \langle \tau_{d+j}-\tau_{d+i},\xb \rangle + 1 = \sum_{k \in V_i} x_k - \sum_{\ell \in V_j} x_\ell +1 < r_j+1
\end{align*}
for each $1 \leq i, j \leq n$. 

On the second family, for $I,J \subset [n-1]$ with $|I|=|J|+1$ and $I \cap J = \emptyset$, where $J$ is regarded as a multi-set, it follows from \eqref{ineq:observe} that 
\begin{align*}
\sum_{i \in I}\tau_{d+i}-\sum_{j \in J}\tau_{d+j}&=\tau_{d+i_0} + \sum_{\ell \in \bigcup_{j \in J}V_j}\eb_\ell - \sum_{k \in (\bigcup_{i \in I}V_i) \setminus V_{i_0}}\eb_k \\
&=\sum_{k \in V \setminus (V_n \cup \bigcup_{i \in I}V_i)}\eb_k+\sum_{\ell \in \bigcup_{j \in J}V_j}\eb_\ell - \eb_d, 
\end{align*}
where $i_0 \in I$ and $\bigcup_{j \in J}V_j$ is regarded as a multi-set. Similarly to the above discussions, we obtain that 
\begin{equation}\label{ineq:2}
\begin{split}
\sum_{i \in I}c_i - \sum_{j \in J}c_j &< \left\langle\sum_{i \in I}\tau_{d+i}-\sum_{j \in J}\tau_{d+j},\xb \right\rangle + |I| < 1 +|I|=|J|+2, \; \text{ and} \\
\sum_{i \in I}c_i - \sum_{j \in J}c_j &> \left\langle\sum_{i \in I}\tau_{d+i}-\sum_{j \in J}\tau_{d+j},\xb \right\rangle - |J| > - |J|- \sum_{i \in [n-1] \setminus I}r_i - \sum_{j \in J}r_j. 
\end{split}
\end{equation}

On the third family, for $I \subset [n-1]$ and $J \subset [n]$ with $|I|=|J|+1$, $n \in J$ and $I \cap J = \emptyset$, since we see from $n \in J$ that 
$$
\sum_{i \in I}\tau_{d+i}-\sum_{j \in J}\tau_{d+j}=\sum_{k \in V \setminus (V_n \cup \bigcup_{i \in I}V_i)}\eb_k+\sum_{\ell \in (\bigcup_{j \in J}V_j) \setminus \{v_{n,r_n}\}}\eb_\ell, 
$$
we obtain the conclusion by slightly modifying the estimation from \eqref{ineq:2}. 
\end{proof}
\begin{lem}\label{n=34}
The implication (b) $\Rightarrow$ (a) in Lemma~\ref{key} holds if $n=3$ or $n=4$. 
\end{lem}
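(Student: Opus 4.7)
The implication (b) $\Rightarrow$ (a) is an LP feasibility statement: given $\cb = (c_1,\ldots,c_n) \in \calC(r_1,\ldots,r_n) \cap \ZZ^n$ with $n \in \{3,4\}$, we must exhibit $\xb \in (-1,0]^d$ realizing $\lceil \langle \tau_{d+j}, \xb \rangle \rceil = c_j$ for all $j$. My plan is to reduce to a low-dimensional LP and then construct a feasible point by hand.

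The key reduction is that each $\tau_{d+j}$ depends on $\xb$ only through the ``block parameters''
\[
S_i = \sum_{v \in V_i} x_v \;\; (1 \le i \le n-1), \qquad Y = \sum_{v \in V_n \setminus \{v_{n,r_n}\}} x_v, \qquad z = x_d,
\]
with $\langle \tau_{d+j}, \xb\rangle = \sum_{i \in [n-1] \setminus \{j\}} S_i - z$ for $j < n$ and $\langle \tau_{d+n}, \xb \rangle = \sum_{i=1}^{n-1} S_i - Y - 2z$. Since the individual coordinates $x_v$ range freely over $(-1,0]$, the set of achievable block parameters is exactly $S_i \in (-r_i, 0]$, $Y \in (-(r_n-1),0]$, $z \in (-1,0]$. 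Hence it suffices to find such $S_i, Y, z$ satisfying $\langle \tau_{d+j}, \cdot\rangle \in (c_j - 1, c_j]$ for every $j$.

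For $n = 3$, I would build the feasible point in three stages. First, choose $z \in (-1, 0]$ compatible with the upper bounds $c_1, c_2 \le 1$ (e.g.\ $z = 0$ whenever $c_1, c_2 \le 0$, and $z$ slightly negative if some $c_j = 1$). Second, pick $S_1 \in (c_2 - 1 + z, c_2 + z] \cap (-r_1, 0]$ and $S_2 \in (c_1 - 1 + z, c_1 + z] \cap (-r_2, 0]$; non-emptiness of these intersections follows from the inequalities $-r_1 + 1 \le c_2 \le 1$ and $-r_2 + 1 \le c_1 \le 1$ in \eqref{ineq:n=3}, while the pairwise bounds $-r_j \le c_i - c_j \le r_i$ guarantee compatibility of the simultaneous choice in the boundary cases. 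Third, pick $Y \in [S_1 + S_2 - 2z - c_3, S_1 + S_2 - 2z - c_3 + 1) \cap (-(r_3 - 1), 0]$, whose non-emptiness follows from $-r_3 + 1 \le c_1 + c_2 - c_3 \le 1$, again in \eqref{ineq:n=3}.

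For $n = 4$, the same three-stage strategy applies to the five block variables $S_1, S_2, S_3, Y, z$ with four ceiling targets: fix $z$, then the $S_i$'s, then $Y$. Each non-emptiness check matches exactly one family of defining inequalities of $\calC(r_1,r_2,r_3,r_4)$ from \eqref{ineq:n=4}: the pairwise $z_i - z_j$ bounds for joint feasibility of $(S_i, S_j)$; the single $z_i$ bounds for each individual $S_i$; the $z_i + z_j - z_k$ bounds (with $k \in [3]$) for three-term combinations arising when several $S_i$'s are chosen together; and the families involving $z_4$ for the final choice of $Y$. The main obstacle is the substantially larger case analysis, particularly at the boundary of $\calC$ where several inequalities are tight and the ordering of the three stages must be arranged carefully to avoid contradictions. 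A cleaner alternative is to carry out Fourier--Motzkin elimination on the LP in $n+1$ variables and verify directly that the projected system on $\cb$ coincides with \eqref{conic_region}; this is feasible by hand for $n \le 4$ but grows unwieldy for larger $n$, which presumably motivates the restriction in the statement.
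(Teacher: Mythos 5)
Your reduction to the block variables $S_1,\dots,S_{n-1},Y,z$ is exactly the substitution the paper makes (its $y_1,\dots,y_{n-1},y_n,y_d$), and it is correct that these range independently over $(-r_i,0]$, $(-(r_n-1),0]$, $(-1,0]$. The gap is in the staged construction itself: the non-emptiness of the stage-3 interval for $Y$ does \emph{not} follow from $-r_3+1\le c_1+c_2-c_3\le 1$ alone, because the quantity $A=S_1+S_2-2z-c_3$ depends on which points of the stage-2 windows you picked, and for legitimate stage-2 choices the stage-3 window can be empty. Concretely, take $n=3$ and $\cb=(0,0,-1)\in\calC(r_1,r_2,r_3)\cap\ZZ^3$: your stage 1 says $z=0$, your stage 2 allows $S_1=S_2=0$, and then $A=1$, so $[A,A+1)\cap(-(r_3-1),0]=\emptyset$. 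A feasible point exists (e.g.\ $S_1=S_2=-1/2$, $Y=0$), but only because the stage-2 windows contain \emph{some} choices compatible with stage 3; proving that requires an additional argument comparing the achievable range of $S_1+S_2$ against $(c_3+2z-r_3,\,c_3+2z]$, which is precisely where the third family of inequalities must be invoked, and this is absent from your write-up. The problem is worse for $n=4$: there the three conditions for $c_1,c_2,c_3$ each constrain a \emph{sum of two} of $S_1,S_2,S_3$ (namely $\sum_{i\in[3]\setminus\{j\}}S_i-z$), so the $S_i$ cannot be picked from separate one-dimensional windows at all, and your proposal explicitly leaves the resulting case analysis undone. Since the $n=4$ verification is the actual content of the lemma, "the same strategy applies, the main obstacle is the larger case analysis" is not a proof.

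For comparison, the paper avoids the staged LP entirely: since the set of witnesses $\xb$ for a fixed target behaves well under convex combination, it suffices to verify (a) at the \emph{vertices} of $\calC(r_1,\dots,r_n)$ (Remark~\ref{rem:vertex}); for $n=3$ the eight vertices are listed and an explicit witness $\yb$ is given for each, with $\epsilon$ perturbations handling the half-open boundaries. If you want to salvage your approach, either adopt that vertex reduction, or carry out the Fourier--Motzkin elimination you mention at the end --- that would in fact be the cleaner route, but it has to be actually executed for $n=3$ and $n=4$, not just proposed.
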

\begin{proof}
Let $n=3$. Then $\calC(r_1,r_2,r_3)$ is explicitly described as in \eqref{ineq:n=3}. 
\iffalse
Then we can rewrite the inequalities \eqref{ineq:n=3} as follows: 
\begin{align*}
&-r_2+1 \leq z_1 \leq 1, \; -r_3+z_3 \leq z_1 \leq r_1+z_3, \\ 
&-r_1+1 \leq z_2 \leq 1, \; -r_3+z_3 \leq z_2 \leq r_2+z_3, \\
&-r_2 \leq z_1-z_2 \leq r_1, \; -r_3+z_3+1 \leq z_1+z_2 \leq z_3+1, \\
&-r_1-r_2+1 \leq z_3 \leq r_3+1
\end{align*}
\fi
%Note that the inequality $-r_1-r_2+1 \leq z_3 \leq r_3+1$ is redundant 
%since it follows from $-r_2+1 \leq z_1 \leq 1$ and $-r_1 \leq -z_1+z_3 \leq r_3$. 
By the direct computation, we can list the vertices of $\calC(r_1,r_2,r_3)$ as follows: 
\begin{align*}
&(1,1,1),  \; (1,1,r_3+1), \; (1,-r_1+1,-r_1+r_3+1), \; (-r_2+1,1,-r_2+r_3+1), \\
&(-r_2+1,-r_1+1,-r_1-r_2+1), \; (-r_2+1,-r_1+1,-r_1-r_2+r_3+1), \\ 
&(1,-r_1+1,-r_1+1), \; (-r_2+1,1,-r_2+1). 
\end{align*}
As mentioned in Remark~\ref{rem:vertex} below, it suffices to show the existence of $\xb$ satisfying (a) for those vertices. 
Given $\xb \in (-1,0]^d$, let 
\begin{align*}
y_1=\sum_{k \in V_1} x_k, \;\; y_2=\sum_{k \in V_2} x_k, \;\; y_3=\sum_{k \in V_3} x_k - x_d, \text{ and }y_d=x_d. 
\end{align*}
(Remark that $d=|V|=|V_1|+|V_2|+|V_3|=r_1+r_2+r_3$.) 
In our case, it suffices to show that for each vertex $(c_1,c_2,c_3) \in \calC(r_1,r_2,r_3)$, 
there is $\yb=(y_1,y_2,y_3,y_d) \in (-r_1,0] \times (-r_2,0] \times (-r_3+1,0]^d \times (-1,0]$ such that 
$$c_1-1<y_2-y_d \leq c_1, \;\; c_2-1 < y_1 - y_d \leq c_2 \;\text{ and }\;c_3-1 < y_1+y_2-y_3-2y_d \leq c_3.$$
We list how to choose such $\yb$'s for each vertex as follows: 
\begin{align*}
&(1,1,1): \yb=(0,0,0,-\epsilon), \;\;\; (1,1,r_3+1): \yb=(0,0,-r_3+1+\epsilon,-1+\epsilon), \\
&(1,-r_1+1,-r_1+r_3+1): \yb=(-r_1+\epsilon,0,-r_3+1+\epsilon,-1+\epsilon), \\ 
&(-r_2+1,1,-r_2+r_3+1): \yb=(0,-r_2+\epsilon,-r_3+1+\epsilon,-1+\epsilon), \\ 
&(-r_2+1,-r_1+1,-r_1-r_2+1): \yb=(-r_1+\epsilon,-r_2+\epsilon,0,-\epsilon), \\ 
&(-r_2+1,-r_1+1,-r_1-r_2+r_3+1) : \yb=(-r_1+\epsilon,-r_2+\epsilon,-r_3+1+\epsilon,-1+\epsilon),  \\ 
&(1,-r_1+1,-r_1+1) : \yb=(-r_1+\epsilon,0,0,-\epsilon), \\
&(-r_2+1,1,-r_2+1) : \yb=(0,-r_2+\epsilon,0,-\epsilon), 
\end{align*}
where $\epsilon>0$ is sufficiently small. 

For the case $n=4$, we may apply the same discussions as above by using \eqref{ineq:n=4}, 
although the computations become much more complicated. 
\end{proof}
\begin{rem}\label{rem:vertex}
For the proof of the implication (b) $\Rightarrow$ (a) in Lemma~\ref{key}, 
it is enough to show that (a) holds only for each vertex $\cb$ of $\calC(r_1,\ldots,r_n)$. 
In fact, once we can check (a) for all vertices, for any $\cb \in \calC(r_1,\ldots,r_n)$ written like 
$\cb=\sum_{v \in V(\calC)}r_vv$, where $V(\calC)$ denotes the set of vertices and $r_v \geq 0$ with $\sum_{v \in V(\calC)}r_v=1$, 
since there is $\xb_v \in (-1,0]^d$ with $c_j -1 < \langle \tau_{d+j},\xb_v \rangle \leq c_j$ ($j=1,\ldots,n$) for each $v \in V(\calC)$, 
we may set $\xb=\sum_{v \in V(\calC)}r_v\xb_v$. Then we can check $c_j -1 < \langle \tau_{d+j},\xb \rangle \leq c_j$ for each $j$. 
\end{rem}

\bigskip

%%%%%%%%%%%%%%%%%%%%%%%%%%%%%%%%%%%%%%%%%%%%%%%%%%%%%%%%%%%%%%%%%%%%%%%%%%%%%%%%%%%%%%%%%%%%%%%%%%%%%%%%%%%%%%%%%%%%%%%%%%%%%
%%%%%%%%%%%%%%%%%%%%%%%%%%%%%%%%%%%%%%%%%%%%%%%%%%%%%%%%%%%%%%%%%%%%%%%%%%%%%%%%%%%%%%%%%%%%%%%%%%%%%%%%%%%%%%%%%%%%%%%%%%%%%
%%%%%%%%%%%%%%%%%%%%%%%%%%%%%%%%%%%%%%%%%%%%%%%%%%%%%%%%%%%%%%%%%%%%%%%%%%%%%%%%%%%%%%%%%%%%%%%%%%%%%%%%%%%%%%%%%%%%%%%%%%%%%
%%%%%%%%%%%%%%%%%%%%%%%%%%%%%%%%%%%%%%%%%%%%%%%%%%%%%%%%%%%%%%%%%%%%%%%%%%%%%%%%%%%%%%%%%%%%%%%%%%%%%%%%%%%%%%%%%%%%%%%%%%%%%
%%%%%%%%%%%%%%%%%%%%%%%%%%%%%%%%%%%%%%%%%%%%%%%%%%%%%%%%%%%%%%%%%%%%%%%%%%%%%%%%%%%%%%%%%%%%%%%%%%%%%%%%%%%%%%%%%%%%%%%%%%%%%
%%%%%%%%%%%%%%%%%%%%%%%%%%%%%%%%%%%%%%%%%%%%%%%%%%%%%%%%%%%%%%%%%%%%%%%%%%%%%%%%%%%%%%%%%%%%%%%%%%%%%%%%%%%%%%%%%%%%%%%%%%%%%

\section{NCCR of Gorenstein edge rings of complete multipartite graphs}\label{sec:NCCR}

Finally, in this section, we give an NCCR for $\kk[K_{2,2,2}]$ and $\kk[K_4]$, i.e., we prove Theorem~\ref{main3}. 
Before it, we recall the definition of NCCRs. 
\begin{defi}
Let $A$ be a CM normal domain, let $M \neq 0$ be a reflexive $A$-module, and let $E=\End_A(M)$. 
\begin{itemize}
\item We call $E$ a {\em non-commutative resolution} ({\em NCR}, for short) of $A$ if the global dimension of $E$ is finite. 
\item Assume that $A$ is Gorenstein. 
Then $E$ is said to be a {\em non-commutative crepant resolution} ({\em NCCR}, for short) of $A$ 
if $E$ is an NCR and $E$ is a maximal CM $A$-module. 
\end{itemize}
\end{defi}

It scarcely happens that $E=\End_A(M_\calC)$ is an NCCR even if we know that $E$ is an NCR, 
where $M_\calC$ is the direct sum of all conic divisorial ideals. 
In principle, we have to remove some direct summands from $M_\calC$ to make $E$ an NCCR of $A$ (see, e.g., \cite{HN} and \cite{N}). 
In what follows, we will provide how to choose direct summands of $M_\calC$. 

\subsection{Preliminaries on non-commutative resolutions}

For our propose, we recall the methods developed in \cite{SpVdB}. 
We restrict the objects to the edge rings of complete multipartite graphs, 
although the theory in \cite{SpVdB} can be applied for more general objects.

Let $R=\kk[K_{r_1,\ldots,r_n}]$, where we assume that $n = 3$ with $r_1 \geq 2$ or $n \geq 4$. Namely, we have $\Cl(R) \cong \ZZ^n$ by Theorem~\ref{main1}. 
As before, let $d=\sum_{i=1}^nr_i$ and let $m=d+n$. 
Let $G=\Hom(\Cl(R),\kk^\times) \cong (\kk^\times)^n$, where $\kk^\times=\kk \setminus \{0\}$. 
Let $X(G)$ denote the character group of $G$. Then we know that $\Cl(R) \cong X(G)$. 
By using the surjection from $\ZZ^m$ to $\Cl(R) \cong X(G)$ (see \eqref{cl_seq}), we can send each prime divisor $\calD_i$ to $X(G)$, 
and we denote its image by $\beta_i$ for each $i=1,\ldots,m$. 
Let $V_\chi$ be the corresponding irreducible representation of a character $\chi \in X(G)$. 
We define the action of $G$ on $S=\kk[x_1,\ldots,x_m]$ by $g \cdot x_i = \beta_i(g)x_i$ for $g \in G$. Then we see that $R=S^G$. 

As discussed in \cite[Section 10]{SpVdB}, we can write conic divisorial ideals by using those as follows. 
Let $\calA=\mod(G,S)$ be the category of finitely generated $G$-equivariant $S$-modules. Given $\chi \in X(G)$, let 
\begin{align*}
P_\chi=V_\chi \otimes_\kk S \;\text{ and }\; M_\chi = P_\chi^G. 
\end{align*}
%let $M_\chi$ be an $R$-module of the form $(S \otimes_\kk V_\chi)^G$. 
Then $P_\chi \in \calA$. Note that $\chi$ corresponds to a Weil divisor $\sum_{i=1}^ma_i\calD_i$ and we have $M_{-\chi} \cong T(a_1,\ldots,a_m)$. 
For a subset $\calL \subset X(G)$, let 
$$\displaystyle P_\calL=\bigoplus_{\chi \in \calL}P_\chi \;\text{ and }\; \Lambda_\calL=\End_\calA(P_\calL).$$
Moreover, for $\chi \in X(G)$, let $P_{\calL,\chi}=\Hom_\calA(P_\calL,P_\chi)$. Furthermore, let 
$$M_\calL=\bigoplus_{\chi \in \calL}M_\chi \;\text{ and }\; E=\End(M_\calL).$$
Our goal is to choose $\calL$ such as $E$ becomes an NCCR. For this propose, we use the following lemmas:
\begin{lem}[{\cite[Lemma 10.1]{SpVdB}}]\label{lem:gldim}
If $\pdim_{\Lambda_\calL}P_{\calL,\chi}$ is finite for all $\chi \in X(G)$, then $\gldim \Lambda_\calL$ is also finite. 
\end{lem}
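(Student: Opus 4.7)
The plan is to show that every simple $\Lambda_\calL$-module has finite projective dimension; finiteness of $\gldim \Lambda_\calL$ will then follow from standard arguments, since $\Lambda_\calL$ is a module-finite $R$-algebra with only finitely many isomorphism classes of simple modules.

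First, I would identify the simple $\Lambda_\calL$-modules. For any $\chi \in X(G)$, view $V_\chi$ as an object of $\calA$ via the augmentation $S \twoheadrightarrow \kk$; then
\[ F(V_\chi) := \Hom_\calA(P_\calL, V_\chi) = \bigoplus_{\chi' \in \calL} \Hom_G(V_{\chi'}, V_\chi) \]
equals $\kk$ sitting at the $\chi$-summand when $\chi \in \calL$, and vanishes otherwise. Since the projections onto the summands $P_{\chi'}$ of $P_\calL$ give a complete set of primitive orthogonal idempotents of $\Lambda_\calL$, the modules $S_\chi := F(V_\chi)$ for $\chi \in \calL$ are exactly the simple $\Lambda_\calL$-modules.

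Next I would exploit the Koszul complex of the polynomial ring $S$. Setting $V = \bigoplus_{i=1}^m \kk x_i$ with its natural $G$-action, the $G$-equivariant Koszul resolution
\[ 0 \to \Lambda^m V^* \otimes_\kk S \to \cdots \to V^* \otimes_\kk S \to S \to \kk \to 0 \]
is exact in $\calA$; tensoring with $V_\chi$ over $\kk$ gives an exact resolution of $V_\chi$ in $\calA$ of length $m$ whose $k$-th term is a finite direct sum of objects of the form $P_{\chi \otimes \eta}$ for various characters $\eta$ appearing in $\Lambda^k V^*$. Because $P_\calL$ is a direct sum of the projective objects $P_\chi$ in $\calA$, it is projective, so $F = \Hom_\calA(P_\calL, -)$ is exact. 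Applying $F$ therefore produces an exact sequence
\[ 0 \to F(\Lambda^m V^* \otimes_\kk V_\chi \otimes_\kk S) \to \cdots \to P_{\calL, \chi} \to S_\chi \to 0 \]
of $\Lambda_\calL$-modules in which each of the first $m+1$ terms is a finite direct sum of modules $P_{\calL, \eta}$ with $\eta \in X(G)$.

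By the hypothesis that $\pdim_{\Lambda_\calL} P_{\calL, \eta} < \infty$ for every $\eta \in X(G)$, we obtain
\[ \pdim_{\Lambda_\calL} S_\chi \leq m + \max_\eta \pdim_{\Lambda_\calL} P_{\calL, \eta}, \]
where the maximum is taken over the finitely many characters appearing in the resolution; in particular, every simple $\Lambda_\calL$-module has finite projective dimension. The main subtle point is the concluding passage from finite projective dimension of all simples to $\gldim \Lambda_\calL < \infty$: since $\Lambda_\calL$ is a finitely generated module over the Noetherian ring $R$, one may reduce modulo (or complete at) the graded maximal ideal of $R$, making $\Lambda_\calL$ semiperfect with finitely many simples, and in this setting the classical identity $\gldim \Lambda_\calL = \sup_{\chi \in \calL} \pdim_{\Lambda_\calL} S_\chi$ completes the proof.
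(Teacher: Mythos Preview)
The paper does not prove this lemma; it simply quotes it from \cite[Lemma~10.1]{SpVdB}. Your argument is essentially the one given there: resolve each $V_\chi$ in $\calA$ by the $G$-equivariant Koszul complex, apply the exact functor $\Hom_\calA(P_\calL,-)$ to obtain a length-$m$ resolution of the simple $S_\chi$ by finite direct sums of the $P_{\calL,\eta}$, and then invoke the hypothesis together with the fact that $\Lambda_\calL$ is module-finite over the graded local ring $R$ so that $\gldim\Lambda_\calL=\max_{\chi\in\calL}\pdim_{\Lambda_\calL}S_\chi$. The only step you might tighten for a self-contained write-up is the last one: rather than ``reducing modulo'' the maximal ideal, it is cleaner to observe that $\Lambda_\calL$ is Noetherian and semiperfect (being finite over the positively graded connected $\kk$-algebra $R$), so the standard identity between global dimension and the maximum projective dimension of the finitely many simples applies directly; but this is a matter of phrasing, not a gap.
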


Let $Y(G)$ denote the group of one-parameter subgroups of $G$. Note that $Y(G) \cong \ZZ^n$. Let $Y(G)_\RR=Y(G) \otimes_\ZZ \RR \cong \RR^n$. 
We say that $\chi \in X(G)$ is {\em separated from $\calL$ by $\lambda \in Y(G)_\RR$} 
if it holds that $\langle \lambda,\chi \rangle < \langle \lambda,\chi' \rangle$ for each $\chi' \in \calL$.

To prove the finiteness of $\pdim_{\Lambda_\calL}P_{\calL,\chi}$ for all $\chi \in X(G)$, we use the following: 
\begin{lem}[{\cite[Lemma 10.2]{SpVdB}}]\label{lem:finite1}
Let $\chi \in X(G)$ be separated from $\calL$ by $\lambda \in Y(G)_\RR$. Then we obtain the acyclic complex 
\begin{align*}
0 \rightarrow \bigoplus_{\mu_1}P_{\calL,\mu_1} \rightarrow \cdots \rightarrow \bigoplus_{\mu_d}P_{\calL,\mu_d} \rightarrow P_{\calL,\chi} \rightarrow 0, 
\end{align*}
where for each $p \in [d_\lambda]$ with $d_\lambda=|\{ i \in [m] : \langle \beta_i,\lambda \rangle >0\}|$, we let 
$\mu_p=\chi+\beta_{i_1}+\cdots+\beta_{i_p}$ with $\{i_1,\ldots,i_p\} \subset [m]$. 
\end{lem}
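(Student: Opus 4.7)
The plan is to realise the complex as the image, under the exact functor $\Hom_\calA(P_\calL,-)$, of a twisted Koszul complex on the subset of variables of $S$ selected by $\lambda$, and then to use the separation assumption to kill the cokernel term.

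Set $I_+=\{i\in[m]:\langle\beta_i,\lambda\rangle>0\}$ (so $|I_+|=d_\lambda$) and $I_-=[m]\setminus I_+$. Since the variables $\{x_i:i\in I_+\}$ form a regular sequence in the polynomial ring $S$, the Koszul complex on them is an exact $G$-equivariant resolution of $S/(x_i:i\in I_+)\cong\kk[x_i:i\in I_-]$ by free equivariant $S$-modules. Declaring $x_i$ to have $G$-weight $\beta_i$, the basis element of the Koszul module at position $p$ labelled by $J\subset I_+$ with $|J|=p$ carries the weight $\sum_{j\in J}\beta_j$, so tensoring over $\kk$ with $V_\chi$ and using $V_\chi\otimes_\kk P_\eta = P_{\chi+\eta}$ yields the exact sequence in $\calA$
\begin{align*}
0 \to \bigoplus_{\substack{J\subset I_+\\ |J|=d_\lambda}} P_{\chi+\sum_{j\in J}\beta_j} \to \cdots \to \bigoplus_{i\in I_+} P_{\chi+\beta_i} \to P_\chi \to V_\chi\otimes_\kk\kk[x_i : i\in I_-] \to 0.
\end{align*}

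Next, apply $\Hom_\calA(P_\calL,-)$. For each summand $V_{\chi'}\otimes_\kk S$ of $P_\calL$, the adjunction $\Hom_\calA(V_{\chi'}\otimes_\kk S,M)=\Hom_G(V_{\chi'},M)=M_{\chi'}$ shows that this functor is nothing but a sum of weight-selection functors, hence is exact, and it sends each $P_\mu$ to $P_{\calL,\mu}$ by definition. Thus it carries the displayed sequence to an exact complex whose interior terms are the prescribed $\bigoplus_{\mu_p}P_{\calL,\mu_p}$, together with a final term
\begin{align*}
\Hom_\calA\bigl(P_\calL,\,V_\chi\otimes_\kk\kk[x_i : i\in I_-]\bigr)\;=\;\bigoplus_{\chi'\in\calL}\bigl(\kk[x_i : i\in I_-]\bigr)_{\chi'-\chi}.
\end{align*}

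The separation hypothesis is used precisely to show this final term vanishes. A nonzero contribution at weight $\chi'-\chi$ forces $\chi'-\chi=\sum_{i\in I_-}c_i\beta_i$ with $c_i\in\ZZ_{\geq 0}$; pairing with $\lambda$ then gives $\langle\chi'-\chi,\lambda\rangle=\sum_{i\in I_-}c_i\langle\beta_i,\lambda\rangle\leq 0$, contradicting $\langle\chi,\lambda\rangle<\langle\chi',\lambda\rangle$. Hence the cokernel term is zero and truncation produces the asserted acyclic complex. The main obstacle is purely bookkeeping, namely aligning the weight conventions of $P_\chi$, the grading shifts in the Koszul differential, and the adjunction defining $P_{\calL,\mu}$; once these are fixed, the single honest input is the weight-cone computation against $\lambda$.
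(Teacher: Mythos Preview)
The paper does not supply its own proof of this lemma: it is quoted verbatim as \cite[Lemma~10.2]{SpVdB} and used as a black box. Your argument is correct and is precisely the standard proof one finds in \v{S}penko--Van den Bergh: build the $G$-equivariant Koszul complex on the variables $\{x_i:i\in I_+\}$, twist by $V_\chi$, apply the exact functor $\Hom_\calA(P_\calL,-)$, and use the separation inequality to annihilate the residual cokernel supported on the complementary variables. There is nothing to compare against in the present paper.

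One small remark on bookkeeping: as written in the paper's statement the indexing runs $\mu_1,\ldots,\mu_d$ from left to right with $\mu_p$ a sum of $p$ weights, which is the reverse of the usual Koszul orientation you (correctly) wrote down, where the top exterior power sits on the far left. This is a harmless labelling inconsistency in the paper, not a defect in your proof.
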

By using this lemma, we see the following: 
\begin{lem}\label{lem:finite2}
If $\pdim_{\Lambda_\calL}P_\calL$ is finite for each $\chi \in \widetilde{\calL}$, where $\calL \subset \widetilde{\calL} \subset X(G)$, 
then $\pdim_{\Lambda_\calL}P_{\calL,\chi}$ is finite for all $\chi \in X(G)$. 
\end{lem}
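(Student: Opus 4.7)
The plan is to apply Lemma~\ref{lem:finite1} as the sole reduction step, exhibiting $P_{\calL,\chi}$ for any $\chi \in X(G)$ as the last term of a finite acyclic complex whose other terms are direct sums of $P_{\calL,\mu}$ with $\mu \in \widetilde{\calL}$.

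Fix any $\chi \in X(G)$; we wish to show $\pdim_{\Lambda_\calL} P_{\calL,\chi} < \infty$. If $\chi \in \widetilde{\calL}$, this is exactly the hypothesis, so we may assume $\chi \notin \widetilde{\calL}$, and in particular $\chi \notin \calL$. Since $\calL$ is a finite subset of the free abelian group $X(G)$, we can separate $\chi$ from $\calL$ by choosing $\lambda \in Y(G)_\RR$ generically in the (nonempty) open cone
\[
\bigcap_{\chi' \in \calL}\{\lambda \in Y(G)_\RR : \langle \lambda,\chi'-\chi\rangle > 0\}.
\]

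Applying Lemma~\ref{lem:finite1} to this $\lambda$ gives an acyclic complex of $\Lambda_\calL$-modules
\[
0 \to \bigoplus_{\mu_{d_\lambda}} P_{\calL,\mu_{d_\lambda}} \to \cdots \to \bigoplus_{\mu_1} P_{\calL,\mu_1} \to P_{\calL,\chi} \to 0,
\]
with each $\mu_p = \chi + \beta_{i_1}+\cdots+\beta_{i_p}$. By a standard homological argument (splitting into short exact sequences and using that finiteness of projective dimension is inherited by the remaining term in a short exact sequence whose other two terms have finite pdim), the finiteness of $\pdim_{\Lambda_\calL} P_{\calL,\mu_p}$ for every $p$ yields the finiteness of $\pdim_{\Lambda_\calL} P_{\calL,\chi}$.

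The proof therefore reduces to arranging that every $\mu_p$ above lies in $\widetilde{\calL}$. This is the main obstacle and is not automatic; one must exploit the freedom in choosing $\lambda$ generically, and possibly iterate the argument finitely many times, to push every resulting character into $\widetilde{\calL}$. In the concrete applications in Section~\ref{sec:NCCR}, the set $\widetilde{\calL}$ will be defined precisely so as to make this closure property hold after a single (or at most a bounded number of) applications of Lemma~\ref{lem:finite1} with a suitable $\lambda$; verifying this combinatorial closure is what ultimately drives the NCCR statements for $\kk[K_{2,2,2}]$ and $\kk[K_4]$.
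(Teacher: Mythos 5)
Your proposal correctly identifies the intended mechanism (resolve $P_{\calL,\chi}$ by the acyclic complex of Lemma~\ref{lem:finite1} and use the standard d\'evissage of projective dimension along short exact sequences), and for what it is worth the paper itself offers no argument at all for this lemma --- it is asserted as a consequence of Lemma~\ref{lem:finite1} with no further justification. But as a proof your text has a genuine gap, and you say so yourself: everything reduces to ``arranging that every $\mu_p$ lies in $\widetilde{\calL}$ (possibly after finitely many iterations),'' and you then defer this to the applications. That closure/termination property \emph{is} the content of the lemma; without it you have only proved the tautology that the conclusion holds whenever a suitable resolution into $\widetilde{\calL}$ exists. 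Indeed, as literally stated (with no hypothesis on $\widetilde{\calL}$ beyond $\calL\subset\widetilde{\calL}\subset X(G)$) the lemma is false --- take $\widetilde{\calL}=\calL$, so the hypothesis is vacuously true while the conclusion is the full finiteness statement --- so any honest proof must isolate and use an implicit assumption on $\widetilde{\calL}$ (e.g.\ that every $\chi\notin\widetilde{\calL}$ admits a $\lambda$ separating it from $\calL$ for which all the characters $\mu_p=\chi+\beta_{i_1}+\cdots+\beta_{i_p}$ land in $\widetilde{\calL}$, or that a well-founded induction pushes them there). Identifying that assumption and proving the induction terminates is exactly what is missing.

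A second, smaller error: the separating cone $\bigcap_{\chi'\in\calL}\{\lambda\in Y(G)_\RR:\langle\lambda,\chi'-\chi\rangle>0\}$ is \emph{not} automatically nonempty for $\chi\notin\calL$; it is empty precisely when $\chi\in\conv(\calL)$. So ``choose $\lambda$ generically'' needs $\chi\notin\conv(\calL)$, which does not follow from $\chi\notin\widetilde{\calL}$ in general. (In the concrete cases of Section~\ref{sec:NCCR} this is fine because $\widetilde{\calL}$ consists of all lattice points of a convex polytope containing $\calL$, but your argument is phrased at the level of the abstract lemma, where this step is unjustified.) The homological bookkeeping in your second paragraph is correct; the missing ideas are the existence of the separating $\lambda$ and the combinatorial control of the characters $\mu_p$.
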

Thus, one of our goals is to show the finiteness of $\pdim_{\Lambda_\calL}P_\calL$ 
for each $\chi \in \widetilde{\calL}$ to verify $\gldim E<\infty$ for a given $\widetilde{\calL}$.

Another goal is to show that $E$ is an MCM $R$-module. For this, we use the following: 
\begin{lem}\label{lem:MCM}
Assume that $\calL$ satisfies the following two conditions: 
\begin{itemize}
\item $\chi-\chi' \in \widetilde{\calL}$ for any $\chi,\chi' \in \calL$; 
\item $M_\chi$ is an MCM $R$-module for any $\chi \in \widetilde{\calL}$. 
\end{itemize}
Then $E$ is an MCM $R$-module. 
\end{lem}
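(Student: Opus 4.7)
The plan is to expand $E$ as a direct sum of Hom-modules indexed by pairs of characters in $\calL$, identify each summand with some $M_\mu$ for $\mu \in \widetilde{\calL}$, and then invoke the MCM hypothesis term by term. Concretely, I would start from the matrix decomposition
\[ E = \End_R(M_\calL) = \bigoplus_{\chi,\chi' \in \calL} \Hom_R(M_\chi, M_{\chi'}). \]

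The essential input is the identification $\Hom_R(M_\chi, M_{\chi'}) \cong M_{\chi - \chi'}$ (up to the sign convention fixed in Section~\ref{sec:conic}). This is a standard fact about divisorial ideals on a normal (hence Krull) domain: since $R$ is normal and each $M_\chi$ is a rank-one reflexive $R$-module representing the class corresponding to $\chi$ under $\Cl(R) \cong X(G)$, the reflexive hull computation shows that $\Hom_R(M_\chi, M_{\chi'})$ is again a divisorial ideal whose class in $\Cl(R)$ is the difference of the two classes. Hence each summand above is isomorphic to $M_{\chi - \chi'}$ (or $M_{\chi' - \chi}$, depending on sign convention).

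The two hypotheses then close the argument immediately. The first hypothesis $\chi - \chi' \in \widetilde{\calL}$ for all $\chi,\chi' \in \calL$ guarantees that every exponent appearing in the decomposition lies in $\widetilde{\calL}$; the second hypothesis then states that each summand $M_{\chi - \chi'}$ is an MCM $R$-module. Since a finite direct sum of MCM modules is MCM, we conclude that $E$ is MCM, as desired.

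I do not anticipate any serious obstacle. The only point requiring care is the sign convention linking characters with divisor classes (the paper's normalization $M_{-\chi} \cong T(a_1,\ldots,a_m)$), which determines whether the relevant difference is $\chi - \chi'$ or $\chi' - \chi$. Since the hypothesis is phrased symmetrically in terms of the subset $\widetilde{\calL}$, this bookkeeping is harmless; the proof is otherwise formal.
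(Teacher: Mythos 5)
Your proof is correct, and it is the standard argument; the paper in fact states this lemma without supplying a proof, so your write-up fills in exactly the intended reasoning. The one identification doing all the work, $\Hom_R(M_\chi,M_{\chi'})\cong M_{\chi'-\chi}$, is justified as you say: over the normal domain $R$ each $M_\chi$ is rank-one reflexive, $\Hom_R(M_\chi,M_{\chi'})$ is again rank-one reflexive with divisor class the difference of the two classes, and a rank-one reflexive module is determined up to isomorphism by its class, so MCM-ness passes to each summand of $E=\bigoplus_{\chi,\chi'\in\calL}\Hom_R(M_\chi,M_{\chi'})$; the symmetry of the first hypothesis in $\chi,\chi'$ makes the sign convention immaterial.
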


\subsection{Proof of Theorem~\ref{main3}} 

This subsection is devoted to giving a proof of Theorem~\ref{main3}. 

First, let us describe the set of conic divisorial ideals in the cases $K_{2,2,2}$ and $K_4$. 
The direct computations imply that those correspond to the following set of the lattice points by Theorem~\ref{main2}: 
\begin{align*}
\calC(2,2,2) \cap \ZZ^3&=\{\pm(1,1,a) : a=1,2,3\} \cup \{\pm(1,0,a), \pm (0,1,a) : a=0,1,2\} \\
&\cup \{\pm(1,-1,a),(0,0,a) : a=-1,0,1\}, \\
\calC(1,1,1,1) \cap \ZZ^4&=\{\pm (1,1,1,2)\} \cup \{\pm (\alpha,1) : \alpha \in \{0,1\}^3\} \\
&\cup\{\pm(1,0,0,0),\pm(0,1,0,0),\pm(0,0,1,0),(0,0,0,0)\}. 
\end{align*}

As in Theorem~\ref{main3}, let 
\begin{align*}
\calL=\{(0,0,-1), (0,0,0), (1,0,0), (1,-1,0),(1,0,1),(0,-1,-1),(1,-1,-1),(0,-1,-2)\}
\end{align*} if $G=K_{2,2,2}$, and 
\begin{align*}
\calL=\{(0,0,0,0), (1,0,0,0), (1,0,0,1), (1,1,0,1), (1,1,1,2)\}
\end{align*}
if $G=K_4$. We set 
\begin{align*}
\widetilde{\calL}&=\calC(2,2,2) \cap \ZZ^3\text{ if $G=K_{2,2,2}$, and }\\
\widetilde{\calL}&=\calC(1,1,1,1) \cap \ZZ^3 \cup \{\pm(0,1,1,2)\}\text{ if $G=K_4$.}
\end{align*}
One can check by {\tt Macaulay2} (\cite{M2}) that the divisorial ideals corresponding to $\pm (0,1,1,2)$ are MCM $\kk[K_4]$-modules. 
Moreover, one can also verify that $\chi-\chi' \in \widetilde{\calL}$ holds for $\chi,\chi' \in \calL$ in both cases. 
Note that all conic divisorial ideals are rank one MCMs. Hence, $E$ is an MCM by Lemma~\ref{lem:MCM}. 

Our remaining task is to show that $\gldim E<\infty$. By Lemmas~\ref{lem:gldim}, \ref{lem:finite1} and \ref{lem:finite2}, we can conclude this if the following procedures terminate: 
\begin{enumerate}
%\item Set $\calL'=\calL$. 
\item Choose $\chi \in \widetilde{\calL} \setminus \calL$. (Note that $\pdim_{\Lambda_\calL}P_{\calL,\chi}=0$ if $\chi \in \calL$.) 
\item Find $\lambda \in Y(G)_\RR$ such that $\chi$ is separated from $\calL$ by $\lambda$ and 
$\chi + \beta_{i_1} + \cdots + \beta_{i_p} \in \calL$ for any $\{i_1,\ldots,i_p\} \subset [m]$ with $\langle \beta_{i_j},\lambda \rangle > 0$ for each $j$. 
\item If $\calL \cup \{\chi\} = \widetilde{\calL}$, then terminate the procedure. 
Otherwise, replace $\calL$ by $\calL \cup \{\chi\}$ and go back to (1). 
\end{enumerate}

For a while, we consider the case $K_{2,2,2}$. Then one has $m=d+n=6+3=9$. 
For the computations of $\beta_1,\ldots,\beta_9$, since we have \begin{align*}
\sum_{j=1}^9 \langle \tau_j,\eb_1 \rangle \calD_j &= \calD_1+\calD_8+\calD_9=0, \;\;
\sum_{j=1}^9 \langle \tau_j,\eb_2 \rangle \calD_j = \calD_2+\calD_8+\calD_9=0, \\
\sum_{j=1}^9 \langle \tau_j,\eb_3 \rangle \calD_j &= \calD_3+\calD_7+\calD_9=0, \;\;
\sum_{j=1}^9 \langle \tau_j,\eb_4 \rangle \calD_j = \calD_4+\calD_7+\calD_9=0, \\
\sum_{j=1}^9 \langle \tau_j,\eb_5 \rangle \calD_j &= \calD_5-\calD_9=0, \text{ and }
\sum_{j=1}^9 \langle \tau_j,\eb_6 \rangle \calD_j = \calD_6-\calD_7-\calD_8-2\calD_9=0, 
\end{align*}
where $\eb_1,\ldots,\eb_6 \in \RR^6$ denote the unit vectors of $\RR^6$ and $\tau_i$'s are as in \eqref{eq:tau}, 
by letting $\calD_7=(1,0,0)$, $\calD_8=(0,1,0)$ and $\calD_9=(0,0,1)$, we obtain that 
\begin{align*}
\calD_1=\calD_2=(0,-1,-1), \; \calD_3=\calD_4=(-1,0,-1), \; \calD_5=(0,0,1) \text{ and }\calD_6=(1,1,2). 
\end{align*}
Hence, we can choose $\beta_1,\ldots,\beta_9$ as the following multi-set: 
$$\{\beta_1,\ldots,\beta_9\}=\{(1,0,0),(0,1,0),(0,0,1) \times 2, (0,-1,-1) \times 2, (-1,0,-1) \times 2, (1,1,2)\},$$
where $\times 2$ stands for the duplicate. 

We list the ordering of choices of $\chi \in \widetilde{\calL} \setminus \calL$ and the corresponding $\lambda$ as follows: 

\medskip

\begin{minipage}{0.33\hsize}
\begin{center}
\begin{tabular}{lrr} \toprule
$\chi$ & $\lambda$ \\ \midrule
(0,-1,0) & (1,1,-1) \\
(1,1,1) & (0,-1,0) \\ 
(-1,-1,-2) & (1,0,0) \\ 
(0,1,1) & (0,-1,0) \\ 
(0,0,1) & (1,1,-1) \\ 
(0,1,2) & (0,-1,0) \\ 
\bottomrule
\end{tabular}
\end{center}
\end{minipage}
\begin{minipage}{0.33\hsize}
\begin{center}
\begin{tabular}{lrr} \toprule
$\chi$ & $\lambda$ \\ \midrule
(-1,-1,-1) & (1,0,0) \\ 
(-1,0,-1) & (1,0,0) \\ 
(-1,1,0) & (0,-1,0) \\ 
(0,1,0) & (0,-1,0) \\
(-1,0,0) & (1,1,-1) \\ 
(1,1,2) & (0,0,-1) \\
\bottomrule
\end{tabular}
\end{center}
\end{minipage}
\begin{minipage}{0.33\hsize}
\begin{center}
\begin{tabular}{lrr} \toprule
$\chi$ & $\lambda$ \\ \midrule
(-1,1,1) & (0,-1,0) \\
(-1,0,-2) & (0,-1,1) \\
(-1,-1,-3) & (0,0,1) \\
(1,0,2) & (0,1,-1) \\
(1,-1,1) & (0,1,-1) \\
(-1,1,-1) & (0,-1,0) \\
(1,1,3) & (0,0,-1) \\
\bottomrule
\end{tabular}
\end{center}
\end{minipage}

\medskip

\noindent
We read off the lines from top to bottom of the left-most table at first and go to the right. 
By this ordering, we can directly check that the procedure terminates. 

For example, at first, for $(0,-1,0) \in \widetilde{\calL} \setminus \calL$, we let $\lambda=(1,1,-1)$, 
and we see that $-1=\langle \lambda,(0,-1,0)\rangle < \langle \lambda,\chi'\rangle$ for each $\chi' \in \calL$ 
since $\langle \lambda,\chi'\rangle \in \{0,1\}$ for each $\chi'$. 
Moreover, we also have $\langle \beta,\lambda \rangle >0$ for $\beta \in \{\beta_1,\ldots,\beta_9\}$ if and only if $\beta \in \{(1,0,0),(0,1,0)\}$ 
and we can check that all of $\chi+(1,0,0)$, $\chi+(0,1,0)$ and $\chi+(1,1,0)$ belong to $\calL$. 
Thus, we add $(0,-1,0)$ to $\calL$. 
Next, take $\chi=(1,1,1) \in \widetilde{\calL} \setminus \calL$ and let $\lambda=(0,-1,0)$. 
Then we see that $-1=\langle \lambda,\chi \rangle < \langle \lambda,\chi' \rangle$ for each $\chi' \in \calL$ since $\langle \lambda,\chi' \rangle \in \{0,1\}$ for each $\chi'$. 
Moreover, we also have $\langle \beta,\lambda \rangle>0$ if and only if $\beta \in \{(0,-1,-1) \times 2\}$ 
and $\chi+(0,-1,-1)$ and $\chi+(0,-2,-2)$ belong to $\calL$. Thus, we add $(1,1,1)$ to $\calL$. 
We repeat this procedure until $\calL$ concides with $\widetilde{\calL}$. 

\medskip

In the case $K_4$, one has $m=4+4=8$. Since the method for the proof is completely the same as the case of $K_{2,2,2}$, 
we just list $\{\beta_1,\ldots,\beta_8\}$, the ordering of choices of $\chi \in \widetilde{\calL} \setminus \calL$ and the corresponding $\lambda$ below: 
\begin{align*}\{\beta_1,\ldots,\beta_8\}=\{&(1,0,0,0),(0,1,0,0),(0,0,1,0),(0,0,0,1), \\
&(1,1,1,2),(0,-1,-1,-1),(-1,0,-1,-1),(-1,-1,0,-1)\};\end{align*} 
\begin{minipage}{0.33\hsize}
\begin{center}
\begin{tabular}{lrr} \toprule
$\chi$ & $\lambda$ \\ \midrule
(1,1,1,1) & (0,-1,-1,1) \\
(0,1,1,1) & (2,0,0,-1) \\
(0,1,0,1) & (1,0,1,-1) \\ 
(0,0,0,1) & (1,1,0,-1) \\  
(0,0,0,-1) & (0,0,0,1) \\
(0,0,-1,-1) & (0,0,1,0) \\
(0,-1,-1,-1) & (0,1,0,0) \\
\bottomrule
\end{tabular}
\end{center}
\end{minipage}
\begin{minipage}{0.33\hsize}
\begin{center}
\begin{tabular}{lrr} \toprule
$\chi$ & $\lambda$ \\ \midrule
(-1,-1,-1,-1) & (1,0,0,0) \\ 
(-1,0,0,-1) & (1,0,0,0) \\ 
(-1,-1,-1,-2) & (0,0,0,1) \\
(-1,0,0,0) & (1,-1,0,0) \\
(-1,0,-1,-1) & (0,0,1,0) \\
(0,-1,0,0) & (0,2,0,-1) \\
(0,1,0,0) & (0,-2,0,1) \\ 
\bottomrule
\end{tabular}
\end{center}
\end{minipage}
\begin{minipage}{0.33\hsize}
\begin{center}
\begin{tabular}{lrr} \toprule
$\chi$ & $\lambda$ \\ \midrule
(1,0,1,1) & (-1,1,-1,0) \\
(0,0,1,1) & (1,1,0,-1) \\
(0,-1,0,-1) & (0,1,0,0) \\
(-1,-1,0,-1) & (1,1,0,-1) \\
(0,0,-1,0) & (0,0,2,-1) \\
(0,0,1,0) & (0,0,-2,1) \\
(0,1,1,2) & (1,-1,0,0) \\
(0,-1,-1,-2) & (0,0,0,1) \\
\bottomrule
\end{tabular}
\end{center}
\end{minipage}

\bigskip

\end{document}